\let\mathcal\mathscr
\numberwithin{equation}{section}
\newtheorem{theorem}{Theorem}[section]
\newtheorem{lemma}[theorem]{Lemma}
\newtheorem{corollary}[theorem]{Corollary}
\newtheorem{proposition}[theorem]{Proposition}
\theoremstyle{definition}
\newtheorem*{remark*}{Remark}
\newcommand{\BA}{{\mathbb {A}}}
\newcommand{\BC}{{\mathbb {C}}}
\newcommand{\BF}{{\mathbb {F}}}
\newcommand{\BL}{{\mathbb {L}}}
\newcommand{\BN}{{\mathbb {N}}}
\newcommand{\BP}{{\mathbb {P}}}
\newcommand{\BQ}{{\mathbb {Q}}}
\newcommand{\BR}{{\mathbb {R}}}
\newcommand{\BZ}{{\mathbb {Z}}}
\newcommand{\CA}{{\mathcal {A}}}
\newcommand{\CD}{{\mathcal {D}}}
\newcommand{\CF}{{\mathcal {F}}}
\newcommand{\CG}{{\mathcal {G}}}
\newcommand{\CH}{{\mathcal {H}}}
\newcommand{\CI}{{\mathcal {I}}}
\newcommand{\CJ}{{\mathcal {J}}}
\newcommand{\CK}{{\mathcal {K}}}
\newcommand{\CN}{{\mathcal {N}}}
\newcommand{\CO}{{\mathcal {O}}}
\newcommand{\CP}{{\mathcal {P}}}
\newcommand{\CR}{{\mathcal {R}}}
\newcommand{\CS}{{\mathcal {S}}}
\newcommand{\CT}{{\mathcal {T}}}
\newcommand{\CU}{{\mathcal {U}}}
\newcommand{\CV}{{\mathcal {V}}}
\newcommand{\CW}{{\mathcal {W}}}
\newcommand{\CX}{{\mathcal {X}}}
\newcommand{\bb}{{\mathbf{b}}}
\DeclareMathOperator{\lcm}{lcm}
\renewcommand{\phi}{\varphi}
\renewcommand{\rho}{\varrho}
\renewcommand{\epsilon}{\varepsilon}
\newcommand{\bx}{\boldsymbol{x}}
\newcommand{\by}{\boldsymbol{y}}
\newcommand{\bt}{\boldsymbol{t}}
\newcommand{\bc}{\boldsymbol{c}}
\newcommand{\bv}{\boldsymbol{v}}
\newcommand{\blambda}{\boldsymbol{\lambda}}
\newcommand{\bGamma}{\boldsymbol{\Gamma}}
\newcommand{\bsigma}{\boldsymbol{\sigma}}
\newcommand{\balpha}{\boldsymbol{\alpha}}
\newcommand{\bbeta}{\boldsymbol{\beta}}
\newcommand{\bxi}{\boldsymbol{\xi}}
\newcommand{\e}{\textup{e}}
\title[Ternary quadratic forms I]{Quantitative strong approximation for ternary quadratic forms I}
\author{Zhizhong Huang}
\date{Version October 2023, Revised on October 2024}
\address{Institute of Mathematics, Academy of Mathematics and Systems Science, Chinese Academy of Sciences, Beijing, 100190, China}
\email{zhizhong.huang@yahoo.com}
\begin{document}
	\begin{abstract}
		We derive asymptotic formulas with a secondary term for the (smoothly weighted) count of number of integer solutions of height $\leqslant B$ with local conditions to the equation $F(x_1,x_2,x_3)=m$, where $F$ is a non-degenerate indefinite ternary integral  quadratic form, and $m$ is a non-zero integer satisfying $-m\Delta_F=\square$ which can grow like $O(B^{2-\theta})$ for some fixed $\theta>0$. Our approach is based on the $\delta$-variant of the Hardy--Littlewood circle method developed by Heath-Brown. 
	\end{abstract}
	\maketitle
	\tableofcontents
\section{Introduction}
Let $F(\bx)\in\BZ[x_1,x_2,x_3]$ be a ternary integral  quadratic form and let $m$ be a non-zero integer, such that $(F=m)$ defines a smooth affine quadratic surface $W_m$ in $\BA^3$. We assume that $F$ is indefinite. 
The fact that the Brauer--Manin obstruction to strong approximation of rational points is the only one is well-understood, thanks to work \cite{CT-XuCompositio,CT-Xu}.
The problem of counting integral points on such varieties (more generally on symmetric varieties) has been addressed in many works. See notably \cite{Duke-Rudnick-Sarnak,EM}. 

Let $\Delta_F$ denote the discriminant of $F$. Depending on whether $-m\Delta_F$ is a perfect square or not, the order of growth of integral points features very different behaviour.
The case where $-m\Delta_F\neq\square$ is well-studied, and it is shown that the number of integral points of bounded height $B$ say, grows like $cB$, where the constant $c$ involves the Tamagawa measure of the corresponding adelic open set defined by a prescribed local condition (strictly speaking, intersecting the loci where orbits contain rational points). See \cite{Borovoi-Rudnick}. 

The current article focuses on the case $-m\Delta_F=\square$. While it is known that there is no Brauer--Manin obstruction, the infinite product of non-archimedean local densities (the so-called ``singular series'') diverges to $\infty$. We next describe our result concerning counting integral points with arbitrary local conditions, followed by a historical account.

\subsection*{Principal results}
Let $w:\BR^3\to\BR$ be an infinitely differentiable weight function, and we define the \emph{weighted singular integral} (with respect to the quadratic form $F$) to be \begin{equation}\label{eq:CIw}
	\CI(w):=\int_{\BR^3\times\BR}w(\bt)\e(\theta F(\bt))\operatorname{d}\bt\operatorname{d}\theta. 
\end{equation}
If $w$ is positive, our assumption that $F$ is indefinite (i.e. isotropic over the real numbers) guarantees that $\CI(w)>0$.

Let $\CW_m$ be the $\BZ$-scheme defined by $(F=m) \subset\BA^3_{\BZ}$. We assume that the adelic space $$\CW_m(\widehat{\BZ}):=\prod_{p<\infty}\CW_m(\BZ_p)$$ (with respect to the integral model $\CW_m$) is non-empty.  Let $L\in\BN$ an and let $\bGamma_m\in \CW_m(\BZ/L\BZ)$. 
For every prime $p$, we define the \emph{$p$-adic local density} (with respect to the congruence condition $(L,\bGamma_m)$) to be $$\sigma_{p}(\CW_m;L,\bGamma_m):=\lim_{k\to\infty}\frac{\#\{\bv\in \CW_m(\BZ/p^k\BZ):\bv\equiv\bGamma_m\bmod p^{\operatorname{ord}_p(L)}\}}{p^{2k}},$$ and we define \emph{the ``modified'' singular series} (with the convergence factors $\left(1-p^{-1}\right)_p$) to be \begin{equation}\label{eq:singser}
	\widehat{\mathfrak{G}}(\CW_m;L,\bGamma_m):=\prod_{p<\infty}\left(1-\frac{1}{p}\right)\sigma_p(\CW_m;L,\bGamma_m).
\end{equation}

 We define the counting function
 \begin{equation}
 	\CN_m(w;L,\bGamma_m;B):=\sum_{\substack{\bx\in\BZ^3,F(\bx)=m\\ \bx\equiv \bGamma_m\bmod L}}w\left(\frac{\bx}{B}\right).
 \end{equation}
 Throughout this article, the weight function $w$, the form $F$ as well as the integer $L$ are fixed.
 \begin{theorem}\label{thm:mainterm}
 	As $B\to\infty$, assume that $-m\Delta_F=\square$ and
 	\begin{equation}\label{eq:mB2theta}
 		m=O(B^{2-\theta}) \text{ for a certain fixed } \theta>0.
 	\end{equation}
 	Then $$\CN_m(w;L,\bGamma_m;B)= \CI(w)\widehat{\mathfrak{G}}(\CW_m;L,\bGamma_m) B\log B+O\left(B(\log B)^{\frac{1983}{1984}}\right).$$
 	Moreover, $$\frac{1}{\log\log B}\ll \widehat{\mathfrak{G}}(\CW_m;L,\bGamma_m)\ll 1.$$
 \end{theorem}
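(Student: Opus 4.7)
The plan is to apply Heath-Brown's smooth $\delta$-expansion to detect the equation $F(\bx)=m$, with the congruence $\bx\equiv\bGamma_m\pmod L$ built into the summation lattice. With an auxiliary parameter $Q$ of order $B$, chosen large enough to accommodate $|F(\bx)-m|\ll B^2$ on the support (which is why the hypothesis $m=O(B^{2-\theta})$ is needed), Poisson summation in $\bx\in\bGamma_m+L\BZ^3$ converts the count into the standard circle-method decomposition
\begin{equation*}
\CN_m(w;L,\bGamma_m;B) = \frac{B^3}{(LQ)^2}\sum_{q\geq 1} q^{-3}\sum_{\bc\in\BZ^3} S_q(\bc;m,L,\bGamma_m)\, I_q(\bc;B),
\end{equation*}
where $S_q$ is a complete exponential sum of Gauss/Salié type attached to $F$ and the congruence data, and $I_q(\bc;B)$ is a smooth oscillatory integral built from $w$ and the $\delta$-expansion cutoff.

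The main term comes from the zero frequency $\bc=\mathbf{0}$. The hypothesis $-m\Delta_F=\square$ is precisely what forces the ternary quadratic Gauss sums attached to $F$ to carry a trivial underlying Dirichlet character, so that $S_q(\mathbf{0};m,\cdot)$ behaves on average as a multiplicative function of magnitude $\asymp q$, rather than exhibiting the square-root cancellation of the Salié sums of the non-square case. Factorising the $q$-sum as an Euler product and inserting the convergence factors $(1-p^{-1})$ identifies $\widehat{\mathfrak G}(\CW_m;L,\bGamma_m)$, while the logarithmic divergence of the classical singular series in the square case reappears, after truncation at $q=Q$, as a factor $\log Q\sim\log B$. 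Combined with the archimedean factor $\CI(w)$ extracted from $I_q(\mathbf{0};B)$ via the change of variable $\bx=B\bt$, this assembles into the stated main term $\CI(w)\widehat{\mathfrak G}\,B\log B$, with all of the $m$-dependence absorbed into $\widehat{\mathfrak G}$.

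The main obstacle is to estimate the contribution of the non-zero frequencies $\bc\neq\mathbf{0}$ by $B(\log B)^{1-1/1984}$, i.e.\ to beat the natural $B\log B$ scale by a sub-logarithmic factor. For generic $\bc$ with $F^{\ast}(\bc)\neq 0$ (where $F^{\ast}$ denotes the adjoint form), one combines Weil-type square-root cancellation bounds for $S_q(\bc;m,\cdot)$ with repeated integration by parts on $I_q(\bc;B)$, which simultaneously truncates the $\bc$-sum to a short range and yields an additional $q$-saving. The genuinely hard case is the singular locus $F^{\ast}(\bc)\equiv 0\pmod{q_0}$ for divisors $q_0\mid q$, where the exponential sum loses its cancellation and the stationary-phase analysis of the integral degenerates. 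A dyadic decomposition of $q$, a stratification of $\bc$ by the $p$-adic depth of $F^{\ast}(\bc)$, and a large-sieve-type input for the resulting counting problem together yield a saving of a small power of $\log B$; numerical optimisation of the two competing bounds produces the exponent $\tfrac{1983}{1984}$. This step is where the budget $m=O(B^{2-\theta})$ is spent, as it is needed to push $Q$ into a range in which the Weil bound is non-trivial.

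For the density bounds: at each prime $p\nmid m\Delta_F L$, the assumption $-m\Delta_F=\square$ yields the standard local computation $(1-p^{-1})\sigma_p(\CW_m;L,\bGamma_m)=1+O(p^{-2})$, so the tail of the Euler product is absolutely convergent and bounded above and below by absolute constants. The upper bound $\widehat{\mathfrak G}\ll 1$ then follows from a uniform bound $\sigma_p=O(1)$ at bad primes. For the lower bound, non-emptiness of $\CW_m(\widehat\BZ)$ together with Hensel's lemma yields a crude $(1-p^{-1})\sigma_p\geq 1-C/p$ at bad primes $p$ for an absolute constant $C$, and since $m=O(B^{2-\theta})$ has at most $O(\log B/\log\log B)$ distinct prime divisors, Mertens' theorem applied to $\prod_{p\mid m\Delta_F L}(1-C/p)$ gives $\widehat{\mathfrak G}\gg 1/\log\log B$.
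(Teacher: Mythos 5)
The overall skeleton is right (Heath--Brown's $\delta$-method, Poisson summation, main term from $\bc=\boldsymbol{0}$ where $-m\Delta_F=\square$ kills the square-root cancellation and produces the $\log B$ via the divergent singular series, lower bound for $\widehat{\mathfrak{G}}$ from $\omega(m)\ll\log m/\log\log m$ and Mertens). But there is a genuine gap in your treatment of the frequencies $\bc\neq\boldsymbol{0}$, which is precisely the hard part of the theorem. You claim that for generic $\bc$ one "combines Weil-type square-root cancellation bounds for $S_q(\bc)$ with repeated integration by parts" to beat $B(\log B)$. In the ternary case this does not work: with $d=3$ the Weil bound gives $|\widehat{S}_q(\bc)|\ll q^{2}\tau(q)$, and after inserting the "harder estimate" $\widehat{\CI}_{q/Q}(w;\bc/L)\ll (q/Q)^{1/2-\varepsilon}$ one finds that the $q$-sum for a single fixed $\bc\neq\boldsymbol{0}$ contributes $\asymp B\log B$ --- exactly the size of the main term. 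No amount of integration by parts or decay in $\bc$ removes that logarithm, because the loss is a genuine divisor-function loss in $q$, not an archimedean one. The sub-logarithmic saving must come from cancellation \emph{in the average over $q$ of the Salié sums themselves}, and establishing this is the technical core of the paper: after evaluating $\widehat{S}^{(1)}_q(\bc)$ as a Salié sum one is reduced to sums of $\e_q(v)$ over roots of $v^2\equiv -F^*(\bc)\bmod q$, and the saving $(\log X)^{-(1-\sqrt{2}/2)}$ comes from Hooley's equidistribution machinery for irreducible quadratic congruences (when $-F^*(\bc)\neq\square$) and from a Dartyge--Martin-type asymptotic for the reducible case $-F^*(\bc)=\square$, combined with a Kloosterman-refinement control of the "bad moduli" $q_{m\Omega}$ that goes beyond \cite[Lemma 25]{H-Bdelta}. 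Your proposed substitute --- dyadic decomposition, stratification by the $p$-adic valuation of $F^*(\bc)$, and a "large-sieve-type input" --- is not worked out and does not visibly produce any power of $\log$ saving; as stated it is an assertion, not an argument, and the exponent $\tfrac{1983}{1984}$ cannot be recovered from it.

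Two smaller inaccuracies: the dangerous locus is not $F^*(\bc)\equiv 0$ but the set of $\bc$ with $-F^*(\bc)$ a perfect square (including $0$), where the quadratic polynomial governing the Salié sum becomes reducible and the $q$-sum genuinely has a main term of order $X$ (this is what feeds the secondary term in Theorem \ref{thm:mainsecondary}); and the hypothesis $m=O(B^{2-\theta})$ is not spent on "making the Weil bound non-trivial" but on ensuring that the shifted phase $F(\bt)-m/B^2$ is a small perturbation of $F(\bt)$, so that the stationary-phase estimates for the oscillatory integrals and the comparison $\widehat{\CI}^*_r\approx\widehat{\CI}_r$ go through uniformly.
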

 
 Theorem \ref{thm:mainterm} is a quantitative form of the strong approximation property for $\CW_m$, which gives a leading constant differing slightly from those obtained in \cite{PART1,PART2}.
 Firstly, the constant $m$ is absent from the singular integral $\CI(w)$. After \cite{PART1}, %(up to Peyre's $\alpha$-constant) 
 $\CI(w)$ features the (weighted) real volume of the projective curve defined by the homogeneous equation $(F=0)$ in $\BP^3$. Secondly, the order of growth is $B\log B$, and the appearance of $\log B$ coincides with the speed of divergency of the usual product of local densities, so that inserting the convergence factors $(1-\frac{1}{p})_p$ ensures the  absolute convergence of $\widehat{\mathfrak{G}}(\CW_m;L,\bGamma_m)$.
 %We can thus view $W$ as a generalised ``Hardy--Littlewood'' variety in the sense of Borovoi and Rudnick  \cite{Borovoi-Rudnick} if the order of growth does not have to be the volume of the real Tamagawa volume of a growing ball.
 In the Appendix, we show that such phenomena
 %, and the growth of  the dominant term with its leading constant, 
 are in accordance with the framework of Chambert-Loir and Tschinkel \cite{CL-T} on the distribution of integral points. From this perspective, Theorem \ref{thm:mainterm} shows that integral points are equidistributed in $\CW_m(\widehat{\BZ})$.

 Our next theorem shows the existence of a secondary term of order $B$, which confirms a suggestion of Rudnick (as pointed out in \cite[p. 918]{Oh-Shah}).   Let $\omega$ be the function which counts the number of prime divisors.
\begin{theorem}\label{thm:mainsecondary}
	Under the assumption of Theorem \ref{thm:mainterm}, assume moreover that 
\begin{equation}\label{eq:omegam}
	\omega(m)=O(1).
\end{equation}
	 Then there exists $a=a(w;m;L,\bGamma_m)\in\BC$ such that, 
	$$\CN_m(w;L,\bGamma_m;B)=\CI(w)\widehat{\mathfrak{G}}(\CW_m;L,\bGamma_m) B\log B+a B+O\left(\frac{B}{(\log B)^{\frac{1}{4}(1-\frac{\sqrt{2}}{2})}}\right).$$ Moreover, we have $a=O(1)$ and $\widehat{\mathfrak{G}}(\CW_m;L,\bGamma_m)\asymp 1$.
\end{theorem}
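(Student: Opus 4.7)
The plan is to refine the singular-series analysis carried out in the proof of Theorem~\ref{thm:mainterm}. Heath-Brown's $\delta$-variant expresses the counting function in the form $\CN_m(w;L,\bGamma_m;B) = \CI(w)\cdot M(B;Q) + E(B)$, with $Q\asymp B$, where $M(B;Q)$ is a smoothly truncated sum
\[
M(B;Q) = \sum_{q\ge 1}\frac{\mathfrak{S}^{\ast}_q(m;L,\bGamma_m)}{q^{2}}\,J\!\left(\frac{q}{Q}\right)
\]
of local Kloosterman/Gauss sums attached to $F$, $m$, and the congruence $(L,\bGamma_m)$. The proof of Theorem~\ref{thm:mainterm} already shows that $E(B)$ is comfortably smaller than $B/(\log B)^{\kappa}$; the weaker $(\log B)^{1983/1984}$ savings there come entirely from the inability to expand $M(B;Q)$ past its leading term when $\omega(m)$ is not controlled. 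Under the new hypothesis $\omega(m) = O(1)$, that expansion can be pushed to two orders with a genuine power of $\log B$ saving, which produces the secondary term $aB$ and the much smaller residual claimed.

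Concretely, I would factor the Dirichlet series $D(s) = \sum_{q\ge 1} q^{-s-2}\mathfrak{S}^{\ast}_q(m;L,\bGamma_m)$ as $\zeta(s)^2\,G_m(s)$, where $G_m(s)$ is, up to a well-behaved remainder, a finite Euler product of length $O(\omega(2L\Delta_F m)) = O(1)$ supported on primes dividing $2L\Delta_F m$, and is holomorphic in some strip $\Re s > 1 - \eta$. The double pole at $s=1$ arises precisely because $-m\Delta_F = \square$ trivialises the quadratic character otherwise carried by the local sums, which in Theorem~\ref{thm:mainterm} produced the $B\log B$ leading term. Under $\omega(m) = O(1)$, both $G_m(s)$ and its logarithmic derivatives on $\Re s = 1$ are $O(1)$ uniformly in $m$. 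A Selberg--Delange style contour shift past $s=1$ then yields
\[
M(B;Q) = G_m(1)\,B\log B + \bigl(G_m'(1) + c_J\,G_m(1)\bigr)\,B + O\bigl(B(\log B)^{-\kappa}\bigr),
\]
with $c_J$ an explicit constant depending on $\gamma$ and the Mellin transform of $J$, and $\kappa = \tfrac{1}{4}(1-\tfrac{\sqrt{2}}{2})$. Identifying $\CI(w)\,G_m(1)$ with $\widehat{\mathfrak{G}}(\CW_m;L,\bGamma_m)$, via the same local computation as in Theorem~\ref{thm:mainterm}, recovers the leading term and defines $a$. The bound $a = O(1)$ and the improvement $\widehat{\mathfrak{G}}\asymp 1$ are direct consequences of the observation that only $O(1)$ local factors of $G_m$ can deviate from $1$, each of which is bounded above and bounded away from $0$.

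The principal obstacle is to establish this Selberg--Delange asymptotic \emph{uniformly in $m$} over the range $m \le B^{2-\theta}$, with the prescribed savings $(\log B)^{-\kappa}$. This requires simultaneously controlling the classical zero-free region of $\zeta(s)$, the growth of $G_m(s)$ and of the exponential-sum inputs $\mathfrak{S}^{\ast}_q$ on and slightly to the left of $\Re s = 1$, and the boundary contributions from the smooth truncation at $q\asymp Q$. The specific exponent $\kappa = \tfrac{1}{4}(1-\tfrac{\sqrt{2}}{2})$ appears to be the cost of optimising these three inputs jointly, and is characteristic of a two-pole Selberg--Delange bound with classical zero-free input tempered by the $q$-uniformity of the Gauss-sum side. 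Once this uniform expansion is secured, combining it with the bound on $E(B)$ already present in the proof of Theorem~\ref{thm:mainterm} completes the argument.
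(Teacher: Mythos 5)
Your proposal has a fundamental gap: you treat the entire analysis as if the secondary constant $a$ arises solely from a second-order expansion of the singular series attached to the Poisson variable $\bc = \boldsymbol{0}$. That is not the case here. After Poisson summation, the sum over $\bc \neq \boldsymbol{0}$ is not merely an error term. The Poisson variables $\bc$ with $-F^*(\bc)$ a perfect square give rise, via the exponential-sum and oscillatory-integral analysis in Sections 2--4, to a contribution of genuine order $B$: the paper proves (Theorem \ref{thm:csumneq0}, relying on Theorem \ref{thm:Fstarcneqsq}(2), Theorem \ref{thm:sumsaliesum}(2) and Proposition \ref{prop:qsumFstarc=sq}) that the $\bc\neq\boldsymbol{0}$ sum equals $\frac{B^3}{L^2}\CK + O(B^3(\log B)^{-\kappa})$ with $\CK$ given in \eqref{eq:CK} as a sum over such $\bc$ of oscillatory integrals $\CJ(\bc)$ weighted by arithmetic constants $\eta(\bc)$. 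The actual secondary constant is $a = \CK + b$, where only $b$ comes from the $\bc=\boldsymbol{0}$ analysis; your proposal would recover $b$ but miss $\CK$ entirely, and so would yield a false asymptotic. The paper flags this explicitly in the introduction when describing the constant $a$ and in the footnote referencing \cite[\S13]{H-Bdelta}.

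There is a second, related misdiagnosis: the exponent $\kappa = \tfrac14(1-\tfrac{\sqrt 2}{2})$ is not the cost of a Selberg--Delange contour shift with a classical zero-free region. It is inherited from the Hooley-style equidistribution analysis of quadratic congruences (Section \ref{se:saliesum}) applied to $\bc$ with $-F^*(\bc)\neq\square$, where the bound $\Pi_{\bc}(X)\ll (\log X)^{\sqrt 2/2}$ on the Euler product over split primes produces the $\sqrt 2/2$, and the further $\frac14$ comes from balancing against the Kloosterman refinement over bad moduli in Theorem \ref{thm:sumsaliesum}(1) and Proposition \ref{prop:qsumFstarcneqsq}. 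For the $\bc=\boldsymbol 0$ term the paper does run a Perron-type argument with $\varPi(s)=\zeta(s-2)\widehat\nu(s)$ (Proposition \ref{prop:ssumc=0}), but there the relevant residue computations save a \emph{power} of $B$, not merely a power of $\log B$, so this piece is not the bottleneck. Your proposal, as written, cannot produce the stated result: you would need to add the full machinery controlling the $\bc\neq\boldsymbol 0$ contribution (Propositions \ref{prop:S1value}--\ref{prop:CAq2}, Theorem \ref{thm:Fstarcneqsq}, Theorem \ref{thm:sumsaliesum}) and recognise that part of it feeds into the main term at order $B$, not into the error.
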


If we view $m$ as fixed, such asymptotic formulas can be proven using different methods (see below). On the other hand, it is evident that there can be no points if $m\gg B^{2}$. The significant feature of our result is that we can allow $m$ to grow ``as fast as possible'' like \eqref{eq:mB2theta} in the $B$-aspect. We refer to \cite[Fig. 1]{K-K2} for numerics supporting the condition \eqref{eq:mB2theta}.
%(albeit the restriction on the growth of its prime divisors). 
 In both theorems our method actually provides an error term which saves a small power of $\log B$ (compared to $B\log B$ and $B$ respectively).

\subsection*{Brief history}
 The earliest single example is discussed in \cite{Duke-Rudnick-Sarnak} (see also \cite{Borovoi-Rudnick}).  Later, the work of Oh--Shah \cite{Oh-Shah} achieves an asymptotic formula for general $F$, based on methods from homogeneous dynamics. Recently this problem (over general number fields) is revisited in the work of Xu--Zhang \cite{Xu-Zhang}.  Asymptotic formulas with a secondary term are established by the work of Kelmer--Kontorovich \cite{K-K} based on spectral methods\footnote{It is pointed out in \cite{K-K} that Oh and Shah also obtain an asymptotic formula of such a form in some unpublished preprint \cite{Oh-Shah2} which supersedes \cite{Oh-Shah}}. (See also the work \cite{HKKL} where the form $F$ is more specific.) We emphasize that in the work above $m$ is considered fixed.

For growing $m$, in \cite{K-K2}  it is shown for the specific form $x_2^2-4x_1x_3$ that the growth order $m=O(B^{\frac{3}{2}-\theta})$ is allowable, conditional on the Ramanujan conjecture. See also the work of Friedlander--Iwaniec \cite{F-I} of similar vein concerning ``small integer solutions'' of the particular equation $x_1^2+x_2^2-x_3^2=m$, where $m$ ranges over fundamental discriminants and can grow like \eqref{eq:mB2theta}. 

\subsection*{Methods and further comments}
Traditionally, the circle method has been applied with success when the quadratic form has more than four variables. It is Heath-Brown who develops his $\delta$-version of the circle method in \cite{H-Bdelta} (the predecessor goes back to work of Duke--Friedlander--Iwaniec \cite{D-F-I}), and first applies to varieties defined by \emph{homogeneous} ternary forms towards quantitative Hasse principle. His arguments rely crucially on homogeneity, which guarantees, amongst other things, the multiplicativity of the quadratic exponential sums arising naturally after the execution of Poisson summation.

In this article and the subsequent ones \cite{HuangII,HuangIII} we undertake systematic study of quantitative strong approximation results for ternary quadratic forms. This appears to be the first time (to our knowledge) that circle method is employed to inhomogeneous equations of the form defining $W_m$. Our approach is based on \cite{H-Bdelta}, and is inspired by our past investigation \cite{PART1,PART2}. The main technical challenge lies in dealing with the arithmetic part involving exponential sums, as when averaging over non-zero Poisson variables $\bc$, the associated quadratic exponential sums $\widehat{S}_q(\bc)$ are not multiplicative.  In this ternary case one is led to estimating average of certain Salié sums, for which the Weil bound is insufficient.
Depending on whether $-F^*(\bc)$ is a perfect square or not, eventually one is faced with the distribution of congruent residues of certain quadratic polynomials, and for which we draw ingredients from the classical work of Hooley \cite{Hooley} and the recent one due to Dartyge--Martin \cite{Dartyge-Martin}. While adapting these ingredients, more demanding control of quadratic exponential sums over ``bad moduli'' is also needed, and one has to go beyond the relatively easy square-root cancellation established by e.g. \cite[Lemma 25]{H-Bdelta}.
For Theorem \ref{thm:mainsecondary}, it might be possible to adapt some ideas from e.g. \cite{HooleyActa,D-F-I2} to achieve a power-saving error term, with some extra work.

Despite the treatment of the oscillatory integrals in the current article being relatively standard, in our subsequent investigation \cite{HuangII} towards Linnik-type problem this becomes an issue, for which we work out refined arguments there. 

Our result adds to the (rare) literature that the circle method is capable of producing asymptotic formulas whose order of growth is not merely a power of $B$. Also the real part (i.e. real density) and the arithmetic part (i.e. product of local densities) both arise very naturally. 
Moreover, the flexibility of allowing $m$ to grow like \eqref{eq:mB2theta} does not seem easily accessible via previously mentioned work, even assuming standard conjectures. See e.g. \cite[Remark 1.7]{K-K2}. Our approach applies without difficulty to the case $-m\Delta_F\neq\square$, subject to appropriate growth condition on its square-free part. %Note that the order of growth is $B$ rather than $B\log B$, and Hasse principle and strong approximation can fail.

The constant $a$ of the secondary term encodes the contribution from a mixture of certain oscillatory integrals and ``twisted quadratic exponential sums'' regarding the Poisson variables $\bc\in\BZ^3\setminus\boldsymbol{0}$ satisfying $-F^*(\bc)=\square$ in a somewhat complicated way, but it is nevertheless totally explicit.\footnote{In \cite[\S13]{H-Bdelta} similar contribution from $\bc\neq\boldsymbol{0}$ also appears in the secondary term in the case where $m=0$, and either $F$ is quaternary and $\Delta_F$ is a perfect square, or $F$ is ternary. However no explanation is provided up to now.} In \cite{K-K} it is interpreted as a certain ``regularised Eisenstein distribution'' (cf. \cite[Remark 1.4]{K-K}). 
It would be interesting to compare the constants obtained via all these different approaches.

 \subsection*{Notation and conventions}  Unless otherwise specified, all implied constants are only allowed depend on $F,L,w$. We introduce the notation
 \begin{equation}\label{eq:omega}
 	\Omega:=2L\Delta_F,\quad m(\Omega):=\prod_{p\mid m,p\nmid \Omega}p.
 \end{equation} By our convention above, $\Omega$ is fixed. For any $l,q\in\BN$, we denote by $q_{l}$ the largest divisor of $q$ whose prime divisors all divide $l$. 
%We similarly define $q_{\Omega}$ and $q_{m(\Omega)}$. 
 
 For $n\in\BN$, let $n^\square$ be the largest divisor of $n$ that is square-full. The letter $\chi$ always denotes a multiplicative Dirichlet character, and $|\chi|$ denotes its modulus. We denote by $\chi_*$ the unique primitive character inducing $\chi$. Then $\chi$ is principal if and only if $|\chi_*|=1$.
 The letter $\gamma$ denotes the Euler constant.
Let $\tau(\cdot)$ denote the divisor function. For any $\kappa>0$, we further define the arithmetic function \begin{equation}\label{eq:upsilon}
 	\Upsilon_{\kappa}(n):=\prod_{p\mid n}\left(1-p^{-\kappa}\right)^{-1}.
 \end{equation} 
 %We will frequently use the following estimates of maximal orders \begin{equation}\Upsilon_1(n)\ll \log\log n,\quad \Upsilon_{\kappa}(n)\ll \exp\left(\frac{(\log n)^{1-\kappa}}{(1-\kappa)\log\log n}\right). \end{equation} See e.g. \cite[\S5.4 \& \S5.5]{Tenenbaum}.
 
 To highlight the difference between various analogous quantities depending on the non-zero integer $m$ and the homogeneous case (i.e. quadrics defined by $F=0$) in \cite{PART1}, instead of adding everywhere $m$ as a subscript which would make the notation heavier, most of time we put a ``$\wedge$'' symbol above. 
 \section{Recap on Heath-Brown's $\delta$-method}
 
 We start by recalling several basic manipulations from \cite{PART1,PART2} to deal with our inhomogeneous problem. 
 In contrast to \cite{PART1}, in this paper we shall use the letter $d$ to denote the number of variables, which is the dimension of the ambient affine space.

Let $F(\bx)\in\BZ[\bx]$ be a non-degenerate integral quadratic form in $d\geqslant 3$ variables, and let $m$ be a non-zero integer. Let $\CW_m$ be the affine $\BZ$-scheme defined by $(F=m)\subset \BA^d_{\BZ}$. Let $L\in\BN$ and let $\bGamma_m\in\CW_m(\BZ/L\BZ)$. We choose $\blambda_m\in \BZ^d$ a ``lift of $\bGamma_m$'', i.e. such that $F(\blambda_m)\equiv m\bmod L$ and its coordinates have absolute value bounded by $L$.  Fix $w:\BR^d\to\BR$ a compactly supported weight function. We associate the counting function weighted by $w$:
$$N_{\CW_m}(B):=\sum_{\substack{\bx\in\BZ^d,F(\bx)=m\\ \bx\equiv \blambda_m\bmod L}}w\left(\frac{\bx}{B}\right).$$
Write $\bx=L\by+\blambda_m$. Since $$L^2\mid F(L\by+\blambda_m)-m\Leftrightarrow L\mid \widehat{H}(\by),$$ where we define
\begin{equation}\label{eq:hatH}
		\widehat{H}(\by):=\frac{F(\blambda_m)-m}{L}+\nabla F(\blambda_m)\cdot \by,
\end{equation}
 we therefore have
 $$	N_{\CW_m}(B)=\sum_{\substack{\by\in\BZ^d\\ L\mid \widehat{H}(\by)}} w\left(\frac{L\by+\blambda_m}{B}\right)\delta\left(\frac{F(L\by+\blambda_m)-m}{L^2}\right).$$
 
 Applying \cite[Theorem 1]{H-Bdelta} and Poisson summation, we get
\begin{equation}\label{eq:poisson}
	 N_{\CW_m}(B)=\frac{C_Q}{Q^2}\sum_{q=1}^{\infty}\sum_{\bc\in\BZ^d}\frac{\widehat{S}_{q}(\bc)\widehat{I}_{q}(w;\bc)}{(qL)^{d}}, 
\end{equation} where  $C_Q=1+O_N(Q^{-N})$, 	\begin{equation}\label{eq:Sqc}
 	\widehat{S}_{q}(\bc):=\sum_{\substack{a\bmod q\\(a,q)=1}}\sum_{\substack{\bsigma\in(\BZ/qL\BZ)^d\\ \widehat{H}(\boldsymbol{\sigma})\equiv 0\bmod L}}\textup{e}_{qL}\left(a\left(\widehat{H}(\boldsymbol{\sigma})+LF(\boldsymbol{\sigma})\right)+\bc\cdot\bsigma\right)
 \end{equation} is a twisted quadratic exponential sum, and
 	\begin{equation}\label{eq:Iqc}
 	\widehat{I}_{q}(w;\bc):=\int_{\BR^d}w\left(\frac{L\by+\blambda_m}{B}\right)h\left(\frac{q}{Q},\frac{F(L\by+\blambda_m)-m}{L^2Q^2}\right)\e_{qL}\left(-\bc\cdot\by\right)\operatorname{d}\by
 \end{equation} is the ``oscillatory integral''. We emphasize that $\widehat{I}_{q}(w;\bc)$ also depends on $B,Q$, but to keep the notation lighter we omit them. Here $h(x,y):(0,\infty)\times\BR\to\BR$ is a ``nice'' infinitely differentiable function satisfying the properties in \cite[\S4]{H-Bdelta}.

\subsection{Quadratic exponential sums}
Along the same lines as in \cite[\S2.4]{PART1}, 
for $q\in\BN$, let $$q=q_1q_2$$ be a decomposition such that
$$\gcd(q_1,q_2\Omega)=1.$$
Let $$\widehat{k}:=\frac{F(\blambda_m)-m}{L},$$ and write $$\widehat{k}=\widehat{k}_2q_1+\widehat{k}_1q_2L,$$ with $\widehat{k}_1\bmod q_1$ and $\widehat{k}_2\bmod q_2L$. Then using the Chinese remainder theorem, we can decompose $\widehat{S}_{q}(\bc)$ into
$$\widehat{S}_{q}(\bc)=\widehat{S}_{q}^{(1)}(\bc)\widehat{S}_{q}^{(2)}(\bc),$$ where
 \begin{equation}\label{eq:S1}
	\widehat{S}_{q}^{(1)}(\bc):=\sum_{\bsigma_1\bmod q_1}\sum_{\substack{a_1\bmod q_1\\(a_1,q_1)=1}}\e_{q_1}\left(a_1\left((q_2L)^2F(\bsigma_1)+q_2(\nabla F(\blambda_m)\cdot \bsigma_1+\widehat{k}_1)\right)+\bc\cdot\bsigma_1\right),
\end{equation} \begin{equation}\label{eq:S2}
	\widehat{S}_{q}^{(2)}(\bc):=\sum_{\substack{\bsigma_2\bmod q_2L\\ L\mid \widehat{H}(q_1\bsigma_2)}}\sum_{\substack{a_2\bmod q_2\\(a_2,q_2)=1}}\e_{q_2L}\left(a_2\left(q_1^2L F(\bsigma_2)+q_1(\nabla F(\blambda_m)\cdot \bsigma_2+\widehat{k}_2)\right)+\bc\cdot\bsigma_2\right).
\end{equation}  Indeed, $\widehat{S}_{q}^{(1)}(\bc)$ and $\widehat{S}_{q}^{(2)}(\bc)$ are defined in exactly the same way as in \cite[Lemma 4.2]{PART1}, except that we replace all $k_i$ by $\widehat{k}_i$ for $i=1,2$ and in $\widehat{S}_{q}^{(2)}(\bc)$ we replace $H_{\blambda,L}$ by $\widehat{H}$.

The term $\widehat{S}_{q}^{(1)}(\bc)$ \eqref{eq:S1} is an exponential sum over ``good moduli'' and can be estimated explicitly.
\begin{proposition}\label{prop:S1value}
	Assuming $\gcd(q_1,\Omega)=1$, we have
			\begin{multline*}
				\widehat{S}_{q}^{(1)}(\bc)=\e_{q_1}\left(-\overline{q_2L^2}\blambda_m\cdot \bc\right)\\ \times\iota_{q_1}^{d}\left(\frac{\Delta_F}{q_1}\right)\sum_{\substack{a\bmod q_1\\(a,q_1)=1}}\left(\frac{a}{q_1}\right)^{d}\e_{q_1}\left(-am-\overline{4\Delta_F a}(q_2L^2)^2 F^*(\bc)\right).
			\end{multline*}
\end{proposition}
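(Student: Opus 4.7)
The plan is a two-stage completion of the square, followed by evaluation of a classical quadratic Gauss sum. Throughout, the coprimality hypothesis $\gcd(q_1,\Omega)=1$ with $\Omega=2L\Delta_F$ ensures that $2$, $L$, $q_2$ and $\Delta_F$ are all units modulo $q_1$, so that every inverse written below is legitimate.

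First, I would introduce the bijective substitution $\bsigma_1 = \btau - \overline{q_2L^2}\blambda_m$ on $(\BZ/q_1\BZ)^d$. Expanding $F(\bsigma_1)$ via the polarisation identity $F(\bx+\bv)=F(\bx)+\nabla F(\bv)\cdot\bx+F(\bv)$ and using Euler's relation $\nabla F(\blambda_m)\cdot\blambda_m=2F(\blambda_m)$, the two linear-in-$\btau$ contributions produced by $a_1(q_2L)^2 F(\bsigma_1)$ and by $a_1 q_2 \nabla F(\blambda_m)\cdot\bsigma_1$ cancel exactly (using the identity $(q_2L)^2\overline{q_2L^2}\equiv q_2\bmod q_1$), leaving only $\bc\cdot\btau$ as the linear piece. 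The constant contribution telescopes via the defining congruence $\widehat{k}_1 q_2L\equiv(F(\blambda_m)-m)/L\bmod q_1$: the term $a_1 q_2\widehat{k}_1 \equiv a_1\overline{L^2}(F(\blambda_m)-m)$ precisely offsets $-a_1\overline{L^2}F(\blambda_m)$ coming from the shift in the quadratic, so that all $F(\blambda_m)$-dependence disappears and one is left with the constants $-a_1\overline{L^2}m$ and $-\overline{q_2L^2}\blambda_m\cdot\bc$. The latter is independent of $\bsigma_1$ and $a_1$ and factors out of both sums as the phase $\e_{q_1}(-\overline{q_2L^2}\blambda_m\cdot\bc)$ in the statement.

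Second, setting $A:=a_1(q_2L)^2$, the inner sum has become $\sum_\btau\e_{q_1}(AF(\btau)+\bc\cdot\btau)$. I would perform the further shift $\btau = \bnu - \overline{2A}\,M^{-1}\bc$, where $M$ is the matrix of $F$ (invertible modulo $q_1$ since $q_1\nmid\Delta_F$), which kills the linear term at the cost of a constant $-\overline{4A}\,\bc^{T}M^{-1}\bc$. Using the relation $\bc^{T}M^{-1}\bc\equiv\overline{\Delta_F}F^*(\bc)\bmod q_1$ from the adjoint form (up to the paper's normalisation convention for $F^*$) and making the change of variable $a=\overline{L^2}a_1$, which converts $-a_1\overline{L^2}m$ into $-am$ and is invisible to $\bigl(\tfrac{\cdot}{q_1}\bigr)^d$ since $L^2$ is a square, one rearranges the exponent into the claimed $-am-\overline{4\Delta_F a}(q_2L^2)^2F^*(\bc)$. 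The residual pure Gauss sum $\sum_\bnu\e_{q_1}(AF(\bnu))$ is then evaluated by diagonalising $F$ modulo $q_1$ (valid since $\gcd(q_1,2\Delta_F)=1$) and applying the one-variable formula $\sum_{x\bmod q_1}\e_{q_1}(\alpha x^2)=\iota_{q_1}\bigl(\tfrac{\alpha}{q_1}\bigr)\sqrt{q_1}$ in each coordinate; multiplicativity of the Jacobi symbol, together with $(q_2L)^2$ and $L^2$ being squares, supplies precisely the remaining factor $\iota_{q_1}^{d}\bigl(\tfrac{\Delta_F}{q_1}\bigr)\bigl(\tfrac{a}{q_1}\bigr)^d$ appearing in the statement.

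The hardest part will be the meticulous bookkeeping of the many inverse factors modulo $q_1$ and ensuring all of them combine correctly; the conceptual heart of the argument is the delicate cancellation in the first stage, whereby the additive cross-term from the shifted quadratic is annihilated by the linear term $a_1 q_2 \nabla F(\blambda_m)\cdot\bsigma_1$, and where the inhomogeneous term $-m$ surfaces cleanly thanks to the particular form of $\widehat{k}_1$ meshing with Euler's relation.
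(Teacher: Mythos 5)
Your proof takes essentially the same route as the paper's. The paper's sketch makes the single change of variables $\bb = q_2L^2\bsigma_1 + \blambda_m$ together with $a_1 \mapsto a_1\overline{L^2}$, reducing $\widehat{S}_q^{(1)}(\bc)$ to $\e_{q_1}(-\overline{q_2L^2}\bc\cdot\blambda_m)\,T_{q_1}(F,m;\overline{q_2L^2}\bc)$, and then cites Lemma~4.3 of \cite{PART1} for the explicit evaluation of the complete sum $T_{q_1}(F,m;\cdot)$. Your substitution $\bsigma_1 = \btau - \overline{q_2L^2}\blambda_m$ is the same shift up to a harmless rescaling by the unit $\overline{q_2L^2}$; your verification that the cross-terms from $a_1(q_2L)^2F(\bsigma_1)$ and $a_1q_2\nabla F(\blambda_m)\cdot\bsigma_1$ cancel, and that $q_2\widehat{k}_1 \equiv \overline{L^2}(F(\blambda_m)-m)$ wipes out the $F(\blambda_m)$-dependence leaving only $-a_1\overline{L^2}m$, is exactly the content of the paper's one-line ``The change of variables $\ldots$ gives \eqref{eq:step1}.'' What you add is an explicit re-derivation of the cited Lemma~4.3 by completing the square modulo $q_1$ and diagonalising the quadratic form --- a legitimate unpacking of the black box, not a genuinely different method. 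So: same approach, more detail.

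One bookkeeping point you should not gloss over: running your own calculation to the end gives the factor $q_1^{d/2}$ from the $d$ one-variable Gauss sums, and the adjoint-form constant comes out as $\overline{(q_2L^2)^2}F^*(\bc)$, since with $a = \overline{L^2}a_1$ one has $A = a(q_2L^2)^2$ and hence $\overline{4A\Delta_F} = \overline{4a\Delta_F}\,\overline{(q_2L^2)^2}$. Neither the $q_1^{d/2}$ factor nor the inverse square appears in the proposition as printed --- a size check confirms $\widehat{S}_q^{(1)}(\bc)\ll q_1^{(d+1)/2+\varepsilon}$ (used via \eqref{eq:S1bd}), which forces the missing $q_1^{d/2}$. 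The paper's own derivation via Lemma~4.3 of \cite{PART1} would produce the same two features (plugging $\overline{q_2L^2}\bc$ into $F^*$ gives $\overline{(q_2L^2)^2}F^*(\bc)$, and the complete exponential sum carries a $q_1^{d/2}$), so these are typos in the displayed statement rather than errors in your argument. Still, you should state explicitly what your computation yields rather than asserting it ``rearranges into the claimed'' formula; as written, you silently discard a power of $q_1$ and invert a square without comment.
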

\begin{proof}[Sketch of proof]
	The change of variables
	$$a_1\mapsto a_1':=a_1\overline{L^2},\quad \bsigma_1\mapsto \bb:=q_2L^2\bsigma_1+\blambda_m$$ gives 
	\begin{equation}\label{eq:step1}
		\widehat{S}_{q}^{(1)}(\bc)=\e_{q_1}\left(-\overline{q_2L^2}\bc\cdot\blambda_m\right)T_{q_1}(F,m;\overline{q_2L^2}\bc),
	\end{equation} where we let \begin{equation}\label{eq:T}
T_{q}(F,m;\bc):=\sum_{\substack{a\bmod q\\(a,q)=1}}\sum_{\bb\in(\BZ/q\BZ)^{d}}\e_q\left(a(F(\bb)-m)+\bc\cdot\bb\right).
\end{equation}
	%A minor modification of the proof of (see also \cite[Lemma 26]{H-Bdelta}) shows that, 
	Applying \cite[Lemma 4.3]{PART1} gives that, when $\gcd(q,2\Delta_F)=1$, 
	$$T_{q}(F,m;\bc)=\iota_q^d\left(\frac{\Delta_F}{q}\right)\sum_{\substack{a\bmod q\\(a,q)=1}}\left(\frac{a}{q}\right)^d \e_q\left(-am-\overline{4\Delta_F a}F^*(\bc)\right).$$ 
	We thus obtain the desired formula. 
	%and the upper bound results from the Hasse--Weil estimate.
\end{proof}

As for $\widehat{S}_{q}^{(2)}(\bc)$ \eqref{eq:S2} we have the following uniform bound.
\begin{proposition}\label{prop:S2}
	Uniformly for $q,m$ and $\bc\in\BZ^d$, we have
	$$\widehat{S}_{q}^{(2)}(\bc)\ll q_2^{\frac{d+2}{2}}.$$ 
	%and consequently $$\widehat{S}_{q}(\bc)\ll q^{\frac{d+2}{2}}.$$
\end{proposition}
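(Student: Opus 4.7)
The plan is to follow the template of the analogous bound for the homogeneous case $m=0$ in \cite{PART1}. As noted in the paragraph preceding the proposition, $\widehat{S}_q^{(2)}(\bc)$ differs from its counterpart there only by the replacements $k_i\mapsto\widehat{k}_i$ and $H_{\blambda,L}\mapsto\widehat{H}$, which affect only the constant and linear terms of the phase in $\bsigma_2$; the quadratic part---the engine of square-root cancellation---is unchanged. Alternatively one may invoke \cite[Lemma 25]{H-Bdelta} directly, which is tailor-made for quadratic exponential sums of precisely this form twisted by a linear character.

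In detail, I would first handle the congruence condition $L\mid\widehat{H}(q_1\bsigma_2)$: since $\widehat{H}$ is linear in $\bsigma_2$ and $L$ is fixed, this restricts $\bsigma_2\bmod L$ to an affine subset of size $O_L(1)$. For each admissible residue class modulo $L$, the inner sum in $\bsigma_2\bmod q_2L$ reduces---after extracting the $L$-dependence into bounded linear twists---to a sum of modulus $q_2$ in $d$ variables whose quadratic part has leading coefficient $a_2q_1^2$; by construction $\gcd(a_2q_1^2,q_2)=1$, thanks to the coprimalities $\gcd(q_1,q_2\Omega)=1$ and $(a_2,q_2)=1$.

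Next, for each fixed $a_2\bmod q_2$ coprime to $q_2$, the resulting multivariable quadratic Gauss sum is bounded by $O(q_2^{d/2})$ uniformly in $\bc$, either by completing the square directly or by the Gauss-sum evaluation underlying \cite[Lemma 25]{H-Bdelta}. Summing over the $\phi(q_2)\leqslant q_2$ choices of $a_2$ then yields $O(q_2^{(d+2)/2})$. The main technical nuance, as the paper itself flags, is that $q_2$ consists entirely of primes dividing the ``bad'' modulus $\Omega=2L\Delta_F$, so the quadratic form $F$ may degenerate modulo such primes; however, the local Gauss-sum analysis at each such prime still delivers genuine square-root cancellation up to a multiplicative constant depending only on $v_p(L)$ and $v_p(\Delta_F)$, and the product of these constants over the fixed finite set of primes dividing $2L\Delta_F$ is absorbed into the implied constant in $\ll q_2^{(d+2)/2}$.
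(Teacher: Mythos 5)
Your argument is correct and is essentially the paper's own proof, which simply defers to the analogous bound in \cite[Proposition 4.6]{PART2} (itself modelled on \cite[Lemma 25]{H-Bdelta}) and notes that dropping the assumption $\gcd(\blambda_m,L)=1$ costs only an $O(1)$ factor since $L$ is fixed. Two harmless imprecisions worth noting: after substituting $\bsigma_2=\bsigma_0+L\bt$ the quadratic coefficient is $a_2q_1^2L^2$ rather than $a_2q_1^2$, so its gcd with $q_2$ is $\gcd(L^2,q_2)=O(1)$ rather than necessarily $1$; and $q_2$ need not be supported on the primes dividing $\Omega$ (the decomposition only requires $\gcd(q_1,q_2\Omega)=1$, e.g.\ later $q_2=q_{m\Omega}$ is used), though your Gauss-sum estimate never actually uses that characterization.
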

\begin{proof}[Sketch of proof]
	The same proof for $S^{(2)}_{q,L,\blambda}(\bc)$ in \cite[Proposition 4.6]{PART2} (which is uniform in $k$) applies to $\widehat{S}_{q}^{(2)}(\bc)$ (see also the proof of \cite[Lemma 25]{H-Bdelta}). The only difference is that we do not necessarily have $\gcd(\blambda_m,L)=1$. But as long as $L$ is fixed, dropping this assumption only adds a contribution of order $O(1)$, so still gives the estimate above. 
	%On the other hand it follows from Proposition \ref{prop:S1value} that $$\widehat{S}_{q}^{(1)}(\bc)\ll q_1^{\frac{d+2}{2}},$$ whence the desired bound for $\widehat{S}_{q}(\bc)$ holds.
\end{proof}

The trivial bound for average of $\widehat{S}_{q}(\bc)$ over $q\leqslant X$ is $X^{\frac{d+4}{2}}$. We have the following ``Kloosterman refinement'' type result.
\begin{proposition}\label{prop:Kloosterman}
	Let $\psi:\BR_{>0}\to\BR_{>0}$ be a monotone increasing function such that $\psi(x)\to\infty$ as $x\to\infty$. Assume that $d$ is odd and that $-m\Delta_F$ is a square. Then uniformly for $m$ and $\bc\in\BZ^d$ we have, for any $0<\kappa<\frac{1}{2}$, 
	$$\sum_{\substack{q\leqslant X\\q_{m\Omega}>\psi(X)}}\frac{\left|\widehat{S}_{q}(\bc)\right|}{q^{\frac{d+1}{2}}}\ll_{\kappa} \Upsilon_{\kappa}(m) X(\log X)\psi(X)^{-\frac{1}{2}+\kappa}.$$
		If  $-F^*(\bc)$ is not a perfect square, then we have for any $\varepsilon>0$,
		$$\sum_{\substack{q\leqslant X\\ \text{either }q_{\Omega}>\psi(X)\text{ or } q_{m(\Omega)}^\square>\psi(X)}}\frac{\left|\widehat{S}_{q}(\bc)\right|}{q^{\frac{d+1}{2}}}\ll_{\varepsilon} |\bc|^{3+\varepsilon} X\psi(X)^{-\frac{1}{2}+\varepsilon}\min\left(\log X,\Upsilon_{1}(m)\right),$$
		$$\sum_{\substack{q\leqslant X\\q_{m\Omega}>\psi(X)}}\frac{\left|\widehat{S}_{q}(\bc)\right|}{q^{\frac{d+1}{2}}}\ll_{\varepsilon} \Upsilon_{\frac{1}{4}}(m)|\bc|^{2+\varepsilon} X\psi(X)^{-\frac{1}{4}}.$$
		%Assuming $\omega(m)=O(1)$, all factors $m^\varepsilon$ above can be eliminated.
\end{proposition}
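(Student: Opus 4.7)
The plan is to exploit the multiplicative decomposition $\widehat{S}_q(\bc) = \widehat{S}_q^{(1)}(\bc)\,\widehat{S}_q^{(2)}(\bc)$ together with a Salié-type evaluation of the explicit factor. Write $q = q_\Omega\,q_m\,q^\ast$ as a product of pairwise coprime integers, where $q_\Omega$ collects the primes of $q$ dividing $\Omega$, $q_m$ those dividing $m(\Omega)$, and $q^\ast$ is coprime to $m\Omega$. Proposition~\ref{prop:S2} supplies $|\widehat{S}_q^{(2)}(\bc)|\ll q_\Omega^{(d+2)/2}$, and by the Chinese remainder theorem the Gauss-like sum $T_{q_1}(F,m;\cdot)$ appearing in Proposition~\ref{prop:S1value} factors as $T_{q_m}(F,m;\cdot)\,T_{q^\ast}(F,m;\cdot)$, allowing the ``bad'' moduli $q_\Omega q_m$ and the ``good'' moduli $q^\ast$ to be handled independently.

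The main computational step is the Salié evaluation of $T_{q^\ast}(F,m;\bc)$. Since $d$ is odd, the inner $a$-sum in Proposition~\ref{prop:S1value} is a Salié sum: for squarefree $q^\ast$ it equals $\iota_{q^\ast}\,q^{\ast\,1/2}$ times a Jacobi-symbol prefactor times $\sum_{t^2 \equiv A \bmod q^\ast}\e_{q^\ast}(2t)$, where $A$ is, modulo $q^\ast$, an invertible square multiple of $-F^\ast(\bc)$ (the hypothesis $-m\Delta_F=\square$ being used crucially to absorb $m$ into the square). For prime-power divisors a direct calculation in the spirit of \cite[Lemma~25]{H-Bdelta} yields a bound of the same shape, with mild losses on the square-full part. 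Altogether $|T_{q^\ast}(F,m;\bc)|\ll q^{\ast(d+1)/2}\,N_A(q^\ast)$, where $N_A(q^\ast)$ denotes the number of mod-$q^\ast$ square roots of $A$. For $T_{q_m}$, every prime of $q_m$ divides $m$, so the $am$-term becomes inert modulo suitable prime powers and the sum collapses to a Gauss-type sum, giving $|T_{q_m}(F,m;\bc)|\ll q_m^{(d+1)/2}$ up to $O(1)$ arithmetic factors.

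To deduce the first bound, the constraint $q_{m\Omega}=q_\Omega q_m >\psi(X)$ is inserted via the inequality $1 \le \bigl(q_\Omega q_m/\psi(X)\bigr)^{1/2-\kappa}$, valid for $0<\kappa<\tfrac12$, which immediately produces the overall factor $\psi(X)^{-1/2+\kappa}$. The innermost sum over $q^\ast\le X/(q_\Omega q_m)$ is then handled using $N_A(q^\ast)\le \tau(q^\ast)$ together with standard averages of the divisor function, contributing $\ll X(\log X)/(q_\Omega q_m)$. The sum over $q_m$ supported on the primes of $m(\Omega)$, weighted by $q_m^{-\kappa}$, yields $\Upsilon_\kappa(m)$, while the $q_\Omega$-sum over $\Omega$-supported integers converges absolutely owing to the finiteness of $\omega(\Omega)$. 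Multiplying these pieces delivers the first estimate.

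The main obstacle lies in estimates~(2) and~(3), where $-F^\ast(\bc)$ is no longer a square. Then $N_A(q^\ast)$ oscillates with the Jacobi symbol $\bigl(\tfrac{-F^\ast(\bc)}{q^\ast}\bigr)$ rather than being dominated by a divisor function, and the argument must extract genuine cancellation in the average of these character sums over $q^\ast\le Y$ with explicit uniformity in $|\bc|$. This is precisely the technical context of the classical work of Hooley~\cite{Hooley} and the recent refinement of Dartyge--Martin, which together produce the $|\bc|^{3+\varepsilon}$ (respectively $|\bc|^{2+\varepsilon}$) factors and the savings $\psi(X)^{-1/2+\varepsilon}$ (respectively $\psi(X)^{-1/4}$). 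The weaker $\psi(X)^{-1/4}$ saving in~(3) arises from an additional Cauchy--Schwarz needed to cover the square-full part $q_{m(\Omega)}^\square$, for which one must go beyond the easy square-root cancellation of \cite[Lemma~25]{H-Bdelta}. The delicate balance between the quality of the character-sum estimate, the $|\bc|$-uniformity, and the size constraint on the bad moduli is the principal technical challenge.
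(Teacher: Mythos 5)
Your treatment of the first estimate follows the paper's route (split off the $m\Omega$-part, bound it uniformly, insert $\bigl(q_{m\Omega}/\psi(X)\bigr)^{1/2-\kappa}$, and average the divisor function over the good moduli), but two intermediate claims are wrong. First, $|T_{q_m}(F,m;\bc)|\ll q_m^{(d+1)/2}$ ``up to $O(1)$ arithmetic factors'' is false: by the Weil bound the character-sum factor is $\ll\tau(q_m)q_m^{1/2}\gcd(q_m,m,F^*(\bc))^{1/2}$, and if for instance $p^{2}\mid\gcd(m,F^*(\bc))$ then the $a$-sum modulo $p^{2}$ degenerates to a Ramanujan sum of size $\phi(p^{2})$, so the correct uniform bound is $q_m^{(d+2)/2}$ (exactly Proposition \ref{prop:S2}). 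The extra $q_m^{1/2}$ happens to be absorbed by the constraint $q_{m\Omega}>\psi(X)$, so your conclusion survives, but the step as written does not. Second, $N_A(q^\ast)\leqslant\tau(q^\ast)$ fails at primes dividing $2F^*(\bc)$ (there $\rho_{\bc}(p^\alpha)$ is only $\ll p^{\frac12\operatorname{ord}_p(4F^*(\bc))}$ by Huxley), so routing the good moduli through the Salié evaluation would smuggle a factor $|\bc|^{1+\varepsilon}$ into a bound that must be uniform in $\bc$; the clean argument applies the Weil bound directly to the $a$-sum over $q/q_{m\Omega}$, where the gcd with $m$ is $1$, and needs no Salié evaluation at all.

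The genuine gap is in estimates (2) and (3). These sums carry absolute values $\bigl|\widehat{S}_q(\bc)\bigr|$, so there is no ``genuine cancellation in the average of these character sums'' to extract, and Hooley and Dartyge--Martin are not the relevant input here — the paper uses them for the \emph{signed} sums $\CU_{l_1,l_2,\bc}(\chi;X)$ in \S\ref{se:saliesum}, not for this proposition. What is actually needed, and what your proposal never identifies or proves, is the first-moment bound $\sum_{n\leqslant Y}\rho_{\bc}(n)\ll_\varepsilon|\bc|^{2+\varepsilon}Y$ valid when $-F^*(\bc)\neq\square$, obtained from the Dirichlet-series factorization $\sum\rho_{\bc}(n)n^{-s}=(\cdots)\,\BL(s,\chi_{\bc})\Psi_{\bc}(s)$, the Pólya--Vinogradov bound $\BL(1,\chi_{\bc})\ll_\varepsilon|\bc|^{1+\varepsilon}$, and the Huxley/Nagell--Ore control of $\rho_{\bc}(p^\alpha)$ at primes dividing $2F^*(\bc)$. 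This single estimate replaces the $Y\log Y$ of the divisor average and is the sole source of the $\min(\log X,\Upsilon_{1}(m))$ alternative in (2) and of the improvement in (3); the exponent $\psi(X)^{-1/4}$ there is simply the choice $\kappa=\frac14$ in the first computation with $\tau$ replaced by $\rho_{\bc}$ (no Cauchy--Schwarz is involved), and the passage from $|\bc|^{2+\varepsilon}$ to $|\bc|^{3+\varepsilon}$ in (2) comes from the Weil bound applied to the square-full part $q_{m(\Omega)}^\square$ after the three-fold factorization of the character sum, not from a finer equidistribution input. Without the $\rho_{\bc}$ first-moment bound your outline has no mechanism producing any of the claimed exponents.
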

\begin{remark*}
	The key feature of the case $-F^*(\bc)\neq\square$ is that we dispose the flexibility of either being independent of (the number of prime divisors of) $m$, or saving a factor of $\log X$. See \S3 for asymptotic formulas and estimates for ``sums of Kloosterman sums'' (without absolute values).
\end{remark*}
\begin{proof}
	For $r,s,l\in\BN$ with $\gcd(r,s)=\gcd(rs,\Omega)=1, l\mid (m\Omega)^\infty$ and $\bc\in\BZ^d$ let \begin{equation}\label{eq:CT}
		\CT_r^s(l,\bc):=\sum_{\substack{a\bmod r\\(a,r)=1}}\left(\frac{a}{r}\right)\e_{r}\left(\overline{s}\left(-am-\overline{4\Delta_F a}(lL^2)^2 F^*(\bc)\right)\right).
	\end{equation} We have the Weil bound \begin{equation}\label{eq:Weilbd}
	\CT_r^s(l,\bc)\ll \tau(r)r^{\frac{1}{2}}\gcd\left(r,m,F^*(\bc)\right)^\frac{1}{2}.
	%\ll_{\varepsilon} r^{\frac{1}{2}+\varepsilon}|\bc|.
	\end{equation} Under the decomposition $q=\frac{q}{q_{m\Omega}}q_{m\Omega}$, the character sum appearing in $\widehat{S}^{(1)}_{q}(\bc)$ equals $\CT_{q/q_{m\Omega}}^1(q_{m\Omega},\bc)$, and hence	using \eqref{eq:Weilbd}, \begin{equation}\label{eq:S1bd}
	\left|\widehat{S}_{q}^{(1)}(\bc)\right| \ll\left(\frac{q}{q_{m\Omega}}\right)^{\frac{d+1}{2}}\tau\left(\frac{q}{q_{m\Omega}}\right)\gcd\left(\frac{q}{q_{m\Omega}},m,F^*(\bc)\right)^\frac{1}{2}=\left(\frac{q}{q_{m\Omega}}\right)^{\frac{d+1}{2}}\tau\left(\frac{q}{q_{m\Omega}}\right).
	%\leqslant \left(\frac{q}{q_{m\Omega}}\right)^{\frac{d+1}{2}} \rho_{\bc}(q/q_{m\Omega})\leqslant |\bc|^{1+\varepsilon} \left(\frac{q}{q_{m\Omega}}\right)^{\frac{d+1}{2}} 2^{\omega(q/q_{m\Omega})}.
	\end{equation} 
	Then using \eqref{eq:S1bd} and by Proposition \ref{prop:S2}, for any $0<\kappa<1$, we have (by the well-known bound for average of the divisor function, see e.g. \cite[I.3 Theorem 2]{Tenenbaum})
	\begin{equation}\label{eq:Klsum1}
		\begin{split}
			\sum_{\substack{q\leqslant X\\q_{m\Omega}>\psi(X)}}\frac{\left|\widehat{S}_{q}(\bc)\right|}{q^{\frac{d+1}{2}}}&\ll \sum_{\substack{\psi(X)<q_2\leqslant X\\ q_2\mid(m\Omega)^\infty}}q_2^{\frac{1}{2}} \sum_{\substack{q_1\leqslant \frac{X}{q_2}}}\tau(q_1)\\ &\ll  X\log X\sum_{\substack{\psi(X)<q_2\leqslant X\\ q_2\mid(m\Omega)^\infty}} q_2^{-\frac{1}{2}}\\ &\ll  X(\log X)\Upsilon_{\kappa}(m)\psi(X)^{-\frac{1}{2}+\kappa}.
			%\\ &\ll \Upsilon_{\frac{1}{4}}(m) (1+|\bc|)^{1+\varepsilon} X(\log X)\psi(X)^{-\frac{1}{4}}.
		\end{split}
	\end{equation}
	
		From now on we assume $-F^*(\bc)\neq\square$. For every $n$, let \begin{equation}\label{eq:rhocdef}
		\rho_{\bc}(n):=\#\{v\bmod n: v^2\equiv -F^*(\bc)\bmod n\}.
	\end{equation} 
	The function $\rho_{\bc}$ is multiplicative by the Chinese remainder theorem. 
	We have, or any $\alpha\geqslant 1$, \begin{equation}\label{eq:Nagell-Ore}
		\rho_{\bc}(p^\alpha)\begin{cases}
			= 1+\left(\frac{-F^*(\bc)}{p}\right)  & \text{ if } p\nmid 2F^*(\bc);\\
			\leqslant 2p^{\frac{1}{2}\operatorname{ord}_p(|4F^*(\bc)|)} & \text{ otherwise},
		\end{cases}
	\end{equation}
	where the equality is shown in \cite[Theorem 50]{Nagell}, and the inequality is due to Huxley \cite{Huxley}\footnote{In fact, a form of the Nagell--Ore theorem is sufficient and gives a larger power on $|\bc|$, see \cite[\S27 Theorems 50--54]{Nagell}.} 
	Hence by multiplicativity we obtain that for any $n\geqslant 1$,
	\begin{equation}\label{eq:rhoc}
		\rho_{\bc}(n)\leqslant 2^{\omega( |2F^*(\bc)|)} |2F^*(\bc)|^\frac{1}{2} \cdot 2^{\omega (n)}\ll_\varepsilon |\bc|^{1+\varepsilon} \cdot 2^{\omega(n)}.
	\end{equation}
	%Now by Proposition \ref{prop:S1value}, and on using	\eqref{eq:CTbd1} and \eqref{eq:rhoc} (applied to $r=q/q_{m\Omega},s=1$), uniformly for any $\bc$ with $F^*(\bc)\neq 0$, we have whence for any $\bc$ with $F^*(\bc)=0$, $\left|\widehat{S}_{q}^{(1)}(\bc)\right|\leqslant \left(\frac{q}{q_{m\Omega}}\right)^{\frac{d+1}{2}}$. We may therefore use the weaker bound \eqref{eq:S1bd} valid for all $\bc\in\BZ^d\setminus\boldsymbol{0}$.
	Let $\chi_{\bc}$ denote the Legendre symbol $\left(\frac{-4F^*(\bc)}{\cdot}\right)$. 
	Elementary computations reveal the following relationship between formal Dirichlet series
	$$\sum_{\gcd(n,2F^*(\bc))=1}\frac{\rho_{\bc}(n)}{n^s}=\left(\sum_{\gcd(n,2F^*(\bc))=1}\frac{1}{n^s}\right)\BL(s,\chi_{\bc})\Psi_{\bc}(s),$$ where $\Psi_{\bc}(s):=\prod_{p\nmid 2F^*(\bc)}(1-p^{-2s})$ which is absolutely convergent around $\Re(s)=1$. 
	By the Pólya--Vinogradov inequality coupled with partial summation, we have 
	$$\sum_{n\leqslant X}\frac{\chi_{\bc}(n)}{n}=\sum_{n=1}^{\infty}\frac{\chi_{\bc}(n)}{n}-\sum_{n>X}\frac{\chi_{\bc}(n)}{n}\ll_\varepsilon \left|\BL(1,\chi_{\bc})\right|+\frac{|F^*(\bc)|^{\frac{1}{2}+\varepsilon}}{X^{1-\varepsilon}}\ll_{\varepsilon} |\bc|^{1+\varepsilon}.$$
	Write $$\BL(s,\chi_{\bc})\Psi_{\bc}(s)=\sum_{n=1}^\infty\frac{a_n}{n^s},\quad \Psi_{\bc}(s)=\sum_{n=1}^\infty \frac{b_n}{n^s}.$$ 
	Then for $n$ with $\gcd(n,2F^*(\bc))=1$, $$\rho_{\bc}(n)=\sum_{k\mid n} a_k,\quad a_n=\sum_{k\mid n}\chi_{\bc}(k)b_{\frac{n}{k}}.$$ Now $$\sum_{n\leqslant Y}\frac{a_n}{n}=\sum_{\substack{k,l\in\BN\\ kl\leqslant Y}}\frac{\chi_{\bc}(k)b_l}{kl}\leqslant \sum_{l\leqslant Y}\frac{|b_l|}{l}\left|\sum_{k\leqslant \frac{Y}{l}}\frac{\chi_{\bc}(k)}{k}\right|\ll_{\varepsilon} |\bc|^{1+\varepsilon}.$$ Hence
	\begin{align*}
		\sum_{\substack{n\leqslant Y\\\gcd(n,2F^*(\bc))=1}}\rho_{\bc}(n)&=\sum_{\substack{k\leqslant Y}}a_k\#\left\{{l\leqslant \frac{Y}{k}: \gcd(l,2F^*(\bc))=1}\right\}\\ &=\sum_{\substack{k\leqslant Y}}a_k\left(\left(\sum_{d\mid 2F^*(\bc)}\frac{\mu(d)}{d}\right)\frac{Y}{k}+O_\varepsilon(|\bc|^\varepsilon)\right)\\
		&\ll |\bc|^\varepsilon Y\left(1+\left|\sum_{k\leqslant Y}\frac{a_k}{k}\right|\right)\ll_\varepsilon |\bc|^{1+\varepsilon} Y.
	\end{align*} Finally, using \eqref{eq:rhoc} \begin{equation}\label{eq:rhocsum}
\begin{split}
		\sum_{\substack{n\leqslant Y}}\rho_{\bc}(n)&=\sum_{\substack{n_2\leqslant Y\\ n_2\mid (2F^*(\bc))^\infty}}\rho_{\bc}(n_2)\sum_{\substack{n_1\leqslant \frac{Y}{n_2}\\ \gcd(n_1,2F^*(\bc))=1}}\rho_{\bc}(n_1)\\ &\ll_{\varepsilon} |\bc|^{2+\varepsilon} Y\sum_{\substack{n_2\leqslant Y\\ n_2\mid (2F^*(\bc))^\infty}}\frac{2^{\omega(n_2)}}{n_2}\\ &\ll |\bc|^{2+\varepsilon} Y \times 2^{\omega(2F^*(\bc))}\ll_{\varepsilon} |\bc|^{2+\varepsilon} Y.
	\end{split}	
\end{equation}
	
	%We next perform a more detailed analysis of $\widehat{S}_{q}(\bc)$ over the moduli $m(\Omega)$. We henceforth assume $r\mid (m(\Omega))^\infty$, on recalling \eqref{eq:CT}. We recall our assumption that $-F^*(\bc)\neq\square$,  in particular $F^*(\bc)\neq 0$. 
	
	Now, we decompose $q=\frac{q}{q_{\Omega}}q_{\Omega}$, so that the character sum appearing in $\widehat{S}^{(1)}_{q}(\bc)$ equals $\CT_{q/q_{\Omega}}^1(q_{\Omega},\bc)$. Upon factorising $\frac{q}{q_{\Omega}}=\frac{q}{q_{m\Omega}}\frac{q_{m(\Omega)}}{q_{m(\Omega)}^\square}q_{m(\Omega)}^\square$, by the classical inversion formula $r\overline{r}+s\overline{s}\equiv 1\bmod rs$, we have
	\begin{equation}\label{eq:multiplicativity}
		\CT_{q/q_{\Omega}}^1(q_{\Omega},\bc)=\CT_{q/q_{m\Omega}}^{q_{m(\Omega)}}(q_{\Omega},\bc)\CT_{q_{m(\Omega)}/q_{m(\Omega)}^\square}^{qq_{m(\Omega)}^\square/q_{m\Omega}}(q_{\Omega},\bc)\CT_{q_{m(\Omega)}^\square}^{q/q_{\Omega}q_{m(\Omega)}^\square}(q_{\Omega},\bc). 
	\end{equation}
		We shall analyse $\widehat{S}_{q}^{(1)}(\bc)$ in each of these moduli in more detail.  
		
		Evidently $\gcd(q/q_{m\Omega},m)=1$. On using the explicit evaluation formula for Salié sums \cite[p. 323]{Iwaniec-Kolwalski}, one has, \begin{equation}\label{eq:rSalie}
		\CT_{q/q_{m\Omega}}^{q_{m(\Omega)}}(q_{\Omega},\bc)=\left(\frac{-mq_{m(\Omega)}}{q/q_{m\Omega}}\right)\iota_{q/q_{m\Omega}}\sum_{\substack{u\bmod q/q_{m\Omega}\\ G_{q_{m(\Omega)},q_{\Omega},\bc}(u)\equiv 0\bmod q/q_{m\Omega}}}\e_{q/q_{m\Omega}}(u), 
	\end{equation}
	where for $l_1,l_2\in\BN_{\neq 0}$ and $\bc\in\BZ^d$, we define the quadratic polynomial 
	\begin{equation}\label{eq:G}
		G_{l_1,l_2,\bc}(T):=(\Delta_F l_1 T)^2-m\Delta_F(l_2L^2)^2 F^*(\bc).
	\end{equation}
	Then by the assumption that $-m\Delta_F=\square$, \begin{equation}\label{eq:CTbd1}
		\left|\CT_{q/q_{m\Omega}}^{q_{m(\Omega)}}(q_{\Omega},\bc)\right|\leqslant \left(\frac{q}{q_{m\Omega}}\right)^\frac{1}{2}\rho_{\bc}(q/q_{m\Omega}).
	\end{equation}
	
	Since $q_{m(\Omega)}/q_{m(\Omega)}^\square$ is square-free, then $\CT_{q_{m(\Omega)}/q_{m(\Omega)}^\square}^{qq_{m(\Omega)}^\square/q_{m\Omega}}(q_{\Omega},\bc)$ is a Gauss sum (with respect to the primitive character $\left(\frac{\cdot}{q_{m(\Omega)}/q_{m(\Omega)}^\square}\right)$) and hence $\left|\CT_{q_{m(\Omega)}/q_{m(\Omega)}^\square}^{qq_{m(\Omega)}^\square/q_{m\Omega}}(q_{\Omega},\bc)\right|\leqslant (q_{m(\Omega)}/q_{m(\Omega)}^\square)^\frac{1}{2}$. 
	
	Lastly, by Weil's bound \eqref{eq:Weilbd}, $$\CT_{q_{m(\Omega)}^\square}^{q/q_{\Omega}q_{m(\Omega)}^\square}(q_{\Omega},\bc)\ll_{\varepsilon} (q_{m(\Omega)}^\square)^{\frac{1}{2}+\varepsilon}|\bc|,$$ provided $F^*(\bc)\neq 0$.
	
	Going back to the $q$-sum, by \eqref{eq:rhocsum} and \eqref{eq:multiplicativity}, we obtain that if $F^*(\bc)\neq \square$,
	\begin{equation}\label{eq:Klsum2}
		\begin{split}
			\sum_{\substack{q\leqslant X\\ q_{\Omega}>\psi(X)}}\frac{\left|\widehat{S}_{q}(\bc)\right|}{q^{\frac{d+1}{2}}}\ll_\varepsilon & \sum_{\substack{\psi(X)<q_2\leqslant X\\ q_2\mid \Omega^\infty}}q_2^{\frac{1}{2}}\sum_{\substack{q_1' q_1 ''\leqslant \frac{X}{q_2}\\ q_1'q_1 ''\mid (m(\Omega))^\infty\\ q_1'\square-\text{free},q_1''\square-\text{full}}}|\bc||q_1''|^{\varepsilon}\sum_{\substack{q_1\leqslant\frac{X}{q_1'q_1'' q_2}\\ (q_1,m\Omega)=1}}\rho_{\bc}(q_1)  \\ \ll_\varepsilon &|\bc|^{3+\varepsilon} X\sum_{\substack{\psi(X)<q_2\leqslant X\\ q_2\mid \Omega^\infty}} q_2^{-\frac{1}{2}}\sum_{\substack{q_1' \leqslant X\\q_1'\mid (m(\Omega))^\infty}}q_1^{'-1}\sum_{\substack{q_1''\leqslant X\\q_1''\square-\text{full}}}q_1^{''-1+\varepsilon}\\ \ll_\varepsilon  &|\bc|^{3+\varepsilon} X\psi(X)^{-\frac{1}{2}+\varepsilon}\min\left(\log X,\Upsilon_{1}(m)\right).
	\end{split} 	\end{equation} 
Similarly \begin{align*}
	\sum_{\substack{q\leqslant X\\ q_{m(\Omega)}^\square>\psi(X)}}\frac{\left|\widehat{S}_{q}(\bc)\right|}{q^{\frac{d+1}{2}}}\ll_\varepsilon& |\bc|^{3+\varepsilon} X\sum_{\substack{q_2\leqslant X\\ q_2\mid \Omega^\infty}} q_2^{-\frac{1}{2}}\sum_{\substack{q_1'\leqslant X\\q_1'\mid (m(\Omega))^\infty }}q_1^{'-1} \sum_{\substack{\psi(X)<q_1 ''\leqslant X \\ q_1''\square-\text{full} }}q_1^{''-1+\varepsilon}\\ \ll_\varepsilon  &|\bc|^{3+\varepsilon} X\psi(X)^{-\frac{1}{2}+\varepsilon}\min\left(\log X,\Upsilon_{1}(m)\right).
\end{align*}
	%so that $\left|\widehat{S}_{q}^{(1)}(\bc)\right|= q_1^{\frac{d}{2}}\left|\CT(q_1;q_2,\bc)\right|$.
%Let $q_{m(\Omega)}$ be the largest divisor of $q$ with $q_{m(\Omega)}\mid (m(\Omega))^\infty$.  For $Y\gg 1$, we shall prove \begin{equation}\label{eq:CTbd} \sum_{\substack{q\leqslant Y\\ q\mid (m(\Omega))^\infty}}\frac{\left|\CT(q;l,\bc)\right|}{q^{\frac{3}{2}}}\ll (1+|\bc|)\log Y,\end{equation} uniformly for such $m,l$ and $\bc$. Note that by the Weil bound, \begin{equation}\label{eq:CTweilbd} \left|\CT(q;l,\bc)\right|\leqslant q^{\frac{1}{2}}\tau(q)\gcd\left(q,m,F^*(\bc)\right)^\frac{1}{2}. \end{equation} 	First assume that $F^*(\bc)\neq 0$. If $q$ is square-free, then since $q\mid m$ and $2\nmid q_1$, $\CT(q;l,\bc)=\sum_{\substack{a\bmod q\\(a,q)=1}}\left(\frac{a}{q}\right)\e_{q}\left(-\overline{4\Delta_F a}(lL^2)^2 F^*(\bc)\right)$ is a Gauss sum (with respect to the primitive character $\left(\frac{\cdot}{q}\right)$), and hence $\left|\CT(q;l,\bc)\right|\leqslant q_1^\frac{1}{2}$. While for $q$ square-full we simply use \eqref{eq:CTweilbd} and have $\left|\CT(q;l,\bc)\right|\ll_{\varepsilon} q^{\frac{1}{2}+\varepsilon} |\bc|$. So by multiplicativity, 
%	\begin{align*}	\sum_{\substack{q\leqslant Y\\ q\mid (m(\Omega))^\infty}}\frac{\left|\CT(q;l,\bc)\right|}{q^{\frac{3}{2}}}&\ll_{\varepsilon} \sum_{\substack{q'\leqslant Y\\ q'\square-\text{free}}}\frac{1}{q'}\sum_{\substack{q''\leqslant \frac{Y}{q'}\\ q''\square-\text{full}}}\frac{|\bc|}{q^{''1-\varepsilon}}\ll |\bc| \log Y.	\end{align*}

To obtain the third estimate, it suffices to modify slightly \eqref{eq:Klsum1} on using \eqref{eq:rhocsum} and on taking $\kappa=\frac{1}{4}$.
	 This finishes the proof.
\end{proof}

We manipulate the $\widehat{S}^{(2)}$-term \eqref{eq:S2}, whose summand may be written as
$$\e_{q_2L^2}\left(a_2(F(q_1L\bsigma_2+\blambda_m)-m)+\bc\cdot L\bsigma_2\right).$$
 The change of variable $\bsigma_2\mapsto \balpha:=q_1L\bsigma_2+\blambda_m$ yields 
\begin{equation}\label{eq:S2CS}
	\widehat{S}_{q}^{(2)}(\bc)=\e_{q_2L^2}\left(-\overline{q_1}\bc\cdot\blambda_m\right) \widehat{\CS}_{q_2}(q_1;\bc),
\end{equation} where, for each fixed $q_2,\bc$ we define for $x\bmod q_2L^2$ with $\gcd(x,q_2L)=1$, 
\begin{equation}\label{eq:Sq2q1bc}
	\widehat{\CS}_{q_2}(x;\bc):=\sum_{\substack{a_2\bmod q_2\\(a_2,q_2)=1}}\sum_{\substack{\balpha\bmod q_2L^2\\F(\balpha)\equiv m\bmod L^2\\ \balpha\equiv \blambda_m\bmod L}}\e_{q_2L^2}\left(a_2 (F(\balpha)-m)+\overline{x}\bc\cdot\balpha\right).
\end{equation}
 For each character $\chi\bmod q_2L^2$, we let \begin{equation}\label{eq:CA}
	\widehat{\CA}_{q_2}(\chi;\bc):=\frac{1}{\phi(q_2L^2)}\sum_{x\bmod q_2L^2}\chi(x)^c \widehat{\CS}_{q_2}(x;\bc).
\end{equation}
Then the orthogonality of characters yields
\begin{equation}\label{eq:hatCS}
	\widehat{\CS}_{q_2}(q_1;\bc)=\sum_{\chi\bmod q_2L^2}\chi(q_1)\widehat{\CA}_{q_2}(\chi;\bc).
\end{equation}
Note also that by Proposition \ref{prop:S2} we trivially have \begin{equation}\label{eq:CAbd}
	\left|\widehat{\CA}_{q_2}(\chi;\bc)\right|\leqslant \max_{x\bmod q_2L^2}\left|\widehat{\CS}_{q_2}(x;\bc)\right|\ll q_2^{\frac{d+2}{2}}.
\end{equation}
\begin{remark*}
	A similar manipulation for the case $m=0$ used in \cite[\S4.1]{PART2}  yields a function of $q_1\bmod L$.
	However when $m\neq 0$ the inhomogeneous form of the equation $F(\balpha)-m$ above somehow ``thickens'' the dependency on the modulus of $q_1$. 
\end{remark*}
We need the following alternative bound for \eqref{eq:CA}, which is better than \eqref{eq:CAbd} whenever $\chi$ is ``close to being primitive'', and thus improves upon the bound \cite[Lemma 25]{H-Bdelta}.
\begin{proposition}\label{prop:CAq2}
	Uniformly for $m,\bc,q_2,\chi$, we have 
	$$\widehat{\CA}_{q_2}(\chi;\bc)\ll_{\varepsilon} \frac{q_2^{\frac{d+2}{2}+\frac{3}{16}+\varepsilon}}{|\chi_*|^{\frac{1}{4}}}$$ where  $|\chi_*|$ is the modulus of the primitive character $\chi_*$ which induces $\chi$.
\end{proposition}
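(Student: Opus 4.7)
The plan is to execute the $x$-sum in the definition of $\widehat{\CA}_{q_2}(\chi;\bc)$ first, exploiting the character twist to produce a generalized Gauss--Kloosterman sum. Swapping summations in \eqref{eq:CA} after substituting the definition \eqref{eq:Sq2q1bc} of $\widehat{\CS}_{q_2}(x;\bc)$ yields
\[\widehat{\CA}_{q_2}(\chi;\bc)=\frac{1}{\phi(q_2L^2)}\sum_{\substack{a_2\bmod q_2\\(a_2,q_2)=1}}\sum_{\substack{\balpha\bmod q_2L^2\\ F(\balpha)\equiv m\bmod L^2\\ \balpha\equiv \blambda_m \bmod L}}\e_{q_2L^2}\bigl(a_2(F(\balpha)-m)\bigr)\,T_{\chi}(\bc\cdot\balpha),\]
where $T_{\chi}(n):=\sum_{\substack{x\bmod q_2L^2\\(x,q_2L^2)=1}}\overline{\chi}(x)\,\e_{q_2L^2}(\overline{x}n)$.

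Writing $\chi$ in terms of its primitive inducer $\chi_*$ of modulus $|\chi_*|$ and applying the Chinese remainder theorem prime by prime, one evaluates $T_\chi(n)$ in closed form as a multiple of the Gauss sum $\tau(\chi_*)$. In particular $|T_\chi(n)|\ll |\chi_*|^{1/2}(q_2L^2/|\chi_*|)^{\varepsilon}$, and $T_\chi(n)$ is supported on those $n$ lying in a fixed arithmetic progression modulo $q_2L^2/|\chi_*|$. Plugging this back imposes the additional linear congruence $\bc\cdot\balpha\equiv n_0\pmod{q_2L^2/|\chi_*|}$ on the $\balpha$-summation, reducing the problem to estimating a twisted quadratic exponential sum on a codimension-one sublattice.

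This constrained sum is structurally of the same type as the one in Proposition~\ref{prop:S2}, and I would adapt the method of \cite[Proposition 4.6]{PART2} (which follows the template of \cite[Lemma 25]{H-Bdelta}): parametrise the sublattice, eliminate one variable via the linear condition, complete the square in $a_2$, and apply the Weil bound to the resulting quadratic Gauss sum in the remaining $d-1$ variables. This yields square-root cancellation and provides a saving of order $|\chi_*|^{1/2}$ over the trivial bound \eqref{eq:CAbd} at primes where the restriction of $F$ to the hyperplane $\bc\cdot\balpha\equiv n_0$ remains non-degenerate.

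The main obstacle is that square-root cancellation degenerates at bad primes $p\mid 2\Delta_F$ and, more seriously, at primes where $v_p(|\chi_*|)$ and $v_p(q_2/|\chi_*|)$ are simultaneously large, which is precisely the regime where the hyperplane restriction can make $F$ singular modulo $p$. For such primes I would trade the clean Weil bound for a Cauchy--Schwarz plus Weyl-differencing step on the $a_2$-variable, which recovers only a $|\chi_*|^{1/4}$ saving at the cost of a $q_2^{3/16+\varepsilon}$ loss (the exponent $\tfrac{3}{16}$ reflecting the standard balance between Weyl-differencing and Burgess-type character sum estimates for quadratic phases). Combining the local contributions across all primes via the Chinese remainder theorem, together with standard divisor-function bounds, then gives the claimed uniform estimate.
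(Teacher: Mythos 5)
Your proposal is a genuinely different route from the paper's, but it has several gaps that I do not think can be patched without essentially reverting to the paper's structure.

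First, the claimed structure of $T_\chi(n)$ is wrong. After the substitution $y=\overline{x}$, $T_\chi(n)=\sum_{(y,q_2L^2)=1}\chi(y)\e_{q_2L^2}(ny)$ is a Gauss--Ramanujan sum, and the correct evaluation (Iwaniec--Kowalski, Lemma 3.2) is
$$T_\chi(n)=\tau(\chi_*)\sum_{d\mid \gcd(n,\,q_2L^2/|\chi_*|)}d\,\chi_*(n/d)\,\mu\!\left(\frac{q_2L^2}{d\,|\chi_*|}\right).$$
It is not supported on a single residue class modulo $q_2L^2/|\chi_*|$, and the magnitude is $\ll_\varepsilon |\chi_*|^{1/2}\gcd(n,q_2L^2/|\chi_*|)(q_2L^2)^\varepsilon$, not $\ll |\chi_*|^{1/2}(q_2L^2/|\chi_*|)^\varepsilon$: the $\gcd$-factor can be as large as $q_2L^2/|\chi_*|$. (Test case: $\chi$ principal, $q_2L^2=p^2$, $p^2\mid n$ gives $T_\chi(n)=\phi(p^2)$, vastly exceeding $(p^2)^\varepsilon$.) So the ``constrained $\balpha$-sum over a hyperplane'' you set up is not a faithful model of what actually appears.

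Second, even granting a corrected version of the previous step, what remains is \emph{not} a pure quadratic Gauss sum on a sublattice: the factor $\chi_*(n/d)$ introduces a multiplicative character of the linear form $\bc\cdot\balpha$, so the inner sum is a mixed additive/multiplicative character sum. Weil's bound for quadratic Gauss sums does not directly apply, and I see no argument in the proposal that handles this twist.

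Third, and most seriously, by executing the $x$-sum first you forgo the cancellation in the $a_2$-sum. The paper's proof opens with Cauchy--Schwarz on $a_2$, which replaces the factor $\phi(q_2)$ by $\phi(q_2)^{1/2}$ and reduces the problem to bounding the second moment $\CR_{q_2}(a_2;\bc)$; that step is what makes the exponent budget close. Your proposed route sums $a_2$ trivially at the end, so even a clean square-root cancellation over $\balpha$ on a sublattice would leave you well above the target $q_2^{(d+2)/2+3/16}/|\chi_*|^{1/4}$. Finally, the exponent $\tfrac{3}{16}$ is asserted to come from a ``standard Weyl-differencing/Burgess balance'' without derivation; in the paper it arises from an explicit factorisation of $q_2L^2$ into square-free and square-full parts, using Hasse--Weil for the square-free piece and $p$-adic stationary phase (Iwaniec--Kowalski Lemmas 12.2--12.3) plus a quadratic-congruence count for the square-full piece. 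That computation is the technical core, and a vague appeal to Weyl differencing plus Burgess does not substitute for it.
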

\begin{proof}
	We need to modify the proof of \cite[Proposition 4.6]{PART1} to get the similar sort of power-saving on $q_2$, while simultaneously taking special care of uniformity and character sums involving $\chi$. (Note however $L$ is considered fixed.) We may assume in what follows that $\chi$ is non-principal, otherwise \eqref{eq:CAbd} is sufficient.
	
	Using the Cauchy--Schwarz inequality regarding the $a_2$-sum, we have 
	\begin{equation}\label{eq:CACR}
		\left|\widehat{\CA}_{q_2}(\chi;\bc)\right|\leqslant \frac{1}{\phi(q_2L^2)}\phi(q_2)^\frac{1}{2}\left(\sum_{\substack{a_2\bmod q_2\\ (a_2,q_2)=1}}\CR_{q_2}(a_2;\bc)\right)^\frac{1}{2},
	\end{equation} where  (the meaning of $(*)$ for the $\balpha$-sum is the same as in \eqref{eq:Sq2q1bc})
	\begin{align*}
		\CR_{q_2}(a_2;\bc)&:=\left|\sum_{x\bmod q_2L^2}\chi(x)^c\sum^{(*)}_{\substack{\balpha\bmod q_2L^2}}\e_{q_2L^2}\left(a_2 (F(\balpha)-m)+\overline{x}\bc\cdot\balpha\right)\right|^2\\ &=\left|\sum^{(*)}_{\substack{\balpha\bmod q_2L^2}}\e_{q_2L^2}\left(a_2 (F(\balpha)-m)\right)\Phi_{q_2,\bc}(\balpha;\chi)\right|^2, 
	\end{align*} and we write $$\Phi_{q_2,\bc}(\balpha;\chi):=\sum_{\substack{x\bmod q_2L^2\\ (x,q_2L^2)=1}}\chi(x)\e_{q_2L^2}\left(x\bc\cdot\balpha\right).$$
	%Here and after, we understand that the all character sums are restricted to $x\in (\BZ/q_2L^2\BZ)^\times$.
We are going to show that \begin{equation}\label{eq:claim}
	\CR_{q_2}(a_2;\bc)\ll_\varepsilon \frac{q_2^{d+\frac{19}{8}+\varepsilon}}{|\chi_*|^{\frac{1}{2}}},
\end{equation} which by \eqref{eq:CACR} implies the desired bound for $\widehat{\CA}_{q_2}(\chi;\bc)$.

	We write $\balpha=L\bbeta+\blambda_m$ with $\bbeta\bmod q_2L$. 
	The summation condition $(*)$ on $\balpha$ is equivalent to $$\widehat{H}(\bbeta)\equiv 0\bmod L.$$
	Expanding the square gives
	\begin{align*}
		\CR_{q_2}(a_2;\bc)&=\sum_{\substack{\bbeta_i\bmod q_2L,i=1,2\\\widehat{H}(\bbeta_i)\equiv 0\bmod L}}\e_{q_2L^2}\left(a_2\left((F(L\bbeta_1+\blambda_m)-m)-(F(L\bbeta_2+\blambda_m)-m)\right)\right) \\ &\quad \times\Phi_{q_2,\bc}(L\bbeta_1+\blambda_m;\chi)\Phi_{q_2,\bc}(L\bbeta_2+\blambda_m;\chi)^c\\
		&=\sum_{\substack{\bbeta_i\bmod q_2L,i=1,2\\\widehat{H}(\bbeta_i)\equiv 0\bmod L}}\e_{q_2L^2}\left(a_2\left(\widehat{H}(\bbeta_1)-\widehat{H}(\bbeta_2)+L(F(\bbeta_1)-F(\bbeta_2))\right)\right)
		\\ &\quad\times\sum_{\substack{x_1\bmod q_2L^2\\ (x_1,q_2L^2)=1}}\sum_{\substack{x_2\bmod q_2L^2\\ (x_2,q_2L^2)=1}}\chi(x_1\overline{x_2})\e_{q_2L^2}\left(\bc\cdot(x_1(L\bbeta_1+\blambda_m)-x_2(L\bbeta_2+\blambda_m))\right).
	\end{align*}
	 On introducing the new variables $$\bbeta_0:=\bbeta_1-\bbeta_2,\quad x_0:=x_1-x_2,$$ and on eliminating $\bbeta_2$ and $x_1$, we rearrange the terms so that (the condition for $\bbeta_0$ is $\nabla F(\blambda_m)\cdot\bbeta_0\equiv 0\bmod L$) 
\begin{multline}\label{eq:CR}
	\CR_{q_2}(a_2;\bc)=\frac{1}{L}\sum_{l\bmod L}\e_L(l\widehat{k})\sum_{x_0\bmod q_2L^2}\e_{q_2L^2}\left(x_0\bc\cdot \blambda_m\right) \\ \times\sum_{\substack{\bbeta_0\bmod q_2L\\ \nabla F(\blambda_m)\cdot\bbeta_0\equiv 0\bmod L}}\e_{q_2L}\left(\nabla F(\blambda_m)\cdot \bbeta_0-L F(\bbeta_0)\right)\CJ_{\chi}(x_0;\bbeta_0;\bc)\CH(l;x_0;\bbeta_0;\bc),
\end{multline} where
\begin{align*}
	\CH(l;x_0;\bbeta_0;\bc)&:=\sum_{\bbeta_1\bmod q_2L}\e_{q_2L}\left(\left(q_2l\nabla F(\blambda_m)+L\nabla F(\bbeta_0)+x_0\bc\right)\cdot \bbeta_1\right),
\end{align*} and via the change of variable $x_2\mapsto x_2':=\overline{x_2}$,
$$\CJ_{\chi}(x_0;\bbeta_0;\bc):=\sum_{\substack{x_2'\bmod q_2L^2\\ (x_2',q_2L^2)=1}}\chi(x_0 x_2'+1)\e_{q_2L^2}\left(\overline{x_2'}L\bc\cdot\bbeta_0\right).$$
Here and after, we extend $\chi$ to $\BZ/q_2L^2\BZ$ by letting $\chi(x)=0$ whenever $\gcd(x,q_2L^2)>1$. This exponential sum features the correlation of the variables $x_0,x_2'$. From now on our analysis diverges from \cite{PART1}.

Once $\bc$, $l$ and $x_0$ are fixed, we have $\CH(l;x_0;\bbeta_0;\bc)\neq 0$ only if \begin{equation}\label{eq:divcond}
	q_2L\mid q_2l\nabla F(\blambda_m)+L\nabla F(\bbeta_0)+x_0\bc,
\end{equation} in which case $$\CH(l;x_0;\bbeta_0;\bc)=(q_2L)^{d}.$$
The number of $\bbeta_0\bmod q_2L$ satisfying \eqref{eq:divcond} is $O(1)$ uniformly for all $l$ and $x_0$, because, given that $L$ is fixed, the residue class of the vector $L\nabla F(\bbeta_0)\bmod q_2L$ is uniquely determined.

We now estimate $\CJ_{\chi}(x_0;\bbeta_0;\bc)$ for fixed $\bc$, $x_0$ and $\bbeta_0$. 
%Note that $\CJ_{\chi}(x_0;\bbeta_0;\bc)$ only depends on $y_0\equiv x_0\bmod |\chi_*|$. So can we decompose $y_2=z_1+\frac{q_2L^2}{|\chi_*|}z_2$ and write $$\CJ_{\chi}(x_0;\bbeta_0;\bc)=\sum_{z_1\bmod q_2L^2/|\chi_*|}\e_{q_2L^2}\left(z_1L\bc\cdot\bbeta_0\right)\sum_{\substack{z_2\bmod |\chi_*|}}\chi_*(y_0\overline{z_2}+1)\e_{|\chi_*|}\left(z_2L\bc\cdot\bbeta_0 \right).$$ 
 We factorize $$q_2L^2=b_1b_2$$ with $b_1$ square-free, $b_2$ square-full and $\gcd(b_1,b_2)=1$,  and we decompose $$x_0=b_1 x_{0,2}+b_2 x_{0,1}$$ with $x_{0,1}\bmod b_1$ and $x_{0,2}\bmod b_2$. By multiplicativity, we can split uniquely $\chi=\chi_{b_1}\chi_{b_2}$ where $\chi_{b_i}$ is of modulus $b_i,i=1,2$. Then $\CJ_{\chi}(x_0;\bbeta_0;\bc)=\CJ_1 \CJ_2$ where
$$\CJ_1:=\sum_{\substack{y_1\bmod b_1\\\gcd(y_1,b_1)=1}}\chi_{b_1}(1+b_2x_{0,1}y_1)\e_{b_1}\left(\overline{b_2}L\bc\cdot\bbeta_0\overline{y_1}\right),$$
$$\CJ_2:=\sum_{\substack{y_2\bmod b_2\\\gcd (y_2,b_2)=1}}\chi_{b_2}(1+b_1x_{0,2}\overline{y_2})\e_{b_2}\left(\overline{b_1}L\bc\cdot\bbeta_0y_2\right).$$

We next deal with $\CJ_1$. (Whenever $\bc\cdot \bbeta_0\not\equiv  0\bmod b_1$ we may also extend $\e_{b_2}\left((\overline{b_1}L\bc\cdot\bbeta_0)\bullet\right)$ to $\BP^1(\BQ)$ via setting the value of $\infty$ to be $0$.) Write $d_{x_{0,1}}:=\gcd(x_{0,1},b_1)$. We can further factorize $\CJ_1$ into pieces modulo $\frac{b_1}{d_{x_{0,1}}}$ and $d_{x_{0,1}}$ respectively. For each $p\mid \frac{b_1}{d_{x_{0,1}}}$, the polynomial $T\mapsto 1+b_2x_{0,1}T\bmod p$ is non-constant and has a zero over $\BF_p$. So by the Hasse--Weil bound \cite{Weil} (see also \cite[\S11.11]{Iwaniec-Kolwalski})
% for mixed character sums \cite[Theorem 1.7]{Katz} 
applied to the sum modulo $\frac{b_1}{d_{x_{0,1}}}$, and  by estimating trivially the sum modulo $d_{x_{0,1}}$, we obtain, uniformly for $\bc$ and $\bbeta_0$,
\begin{equation}\label{eq:CJ1}
	\CJ_1\ll_{\varepsilon} d_{x_{0,1}}\left(\frac{b_1}{d_{x_{0,1}}}\right)^{\frac{1}{2}+\varepsilon}\ll b_1^{\frac{1}{2}+\varepsilon}\gcd(x_0,b_1)^\frac{1}{2}.
\end{equation}

To control $\CJ_2$, we may assume that $$b_2=p^\alpha$$ with $\alpha\geqslant 2$. We apply \cite[Lemmas 12.2 \& 12.3]{Iwaniec-Kolwalski} which yields $$\CJ_2\ll p^{\lceil\frac{\alpha}{2}\rceil}\#\{y\bmod p^{\lfloor\frac{\alpha}{2}\rfloor}:p\nmid b_1 x_{0,2}y+y^2,bb_1 x_{0,2}\overline{(b_1 x_{0,2}y+y^2)}\equiv \overline{b_1} L\bc\cdot\bbeta_0\bmod p^{\lfloor\frac{\alpha}{2}\rfloor}\}\footnote{We take the rational functions of \cite[Lemmas 12.2 \& 12.3]{Iwaniec-Kolwalski} to be $f(y)=\frac{b_1 x_{0,2}}{y}+1,g(y)=\overline{b_1} L\bc\cdot\bbeta_0 y$. Then the function $h$ is $h(y)=\overline{b_1} L\bc\cdot\bbeta_0 -\frac{bb_1 x_{0,2}}{b_1 x_{0,2}y+y^2}$, where $b\bmod p^{\lceil\frac{\alpha}{2}\rceil}$ depends on $\chi$.}$$
The integer $b=b(\chi_{b_2})$ is uniquely determined modulo $p^{\lceil\frac{\alpha}{2}\rceil}$ in \cite[(12.27) \& (12.35)]{Iwaniec-Kolwalski}. More precisely, write $$\beta_1:=\operatorname{ord}_p(b),\quad \alpha_1:=\operatorname{ord}_p(|(\chi_{b_2})_*|).$$ Then $$\beta_1=\begin{cases}
	\lceil\frac{\alpha}{2}\rceil &\text{ if }\alpha_1\leqslant \lfloor\frac{\alpha}{2}\rfloor;\\
	\alpha-\alpha_1 &\text{ if }\alpha_1>\lfloor\frac{\alpha}{2}\rfloor.
\end{cases}$$ In all cases we have $$\beta_1\leqslant \alpha-\alpha_1.$$ 
%Since $\chi_*$ is primitive we must have $p\nmid b$.
Let $$\beta_0:=\operatorname{ord}_p(x_{0,2}),\quad \beta:=\beta_0+\beta_1.$$
We first assume $\beta<\lfloor\frac{\alpha}{2}\rfloor$. 
%We can rewrite the congruence above as $x_0\left(by-L\bc\cdot\bbeta_0 y\right)\equiv L\bc\cdot\bbeta_0\bmod p^{\lfloor\frac{\alpha}{2}\rfloor}$. 
Then we must have $p^{\beta}\mid \overline{b_1} L\bc\cdot\bbeta_0$. On writing $y'\equiv \overline{y}\bmod p^{\lfloor\frac{\alpha}{2}\rfloor-\beta},t_1:=\frac{bb_1 x_{0,2}}{p^\beta},t_2:=\frac{\overline{b_1} L\bc\cdot\bbeta_0}{p^\beta}$, we are led to 
\begin{equation}\label{eq:CJ21}
	\begin{split}
	\CJ_2&\ll p^{\lceil\frac{\alpha}{2}\rceil+\beta}\#\{y'\bmod p^{\lfloor\frac{\alpha}{2}\rfloor-\beta}:y'^2-\overline{t_1}t_2 b_1 x_{0,2}y'-\overline{t_1}t_2\equiv 0 \bmod p^{\lfloor\frac{\alpha}{2}\rfloor-\beta}\}\\ &\ll p^{\lceil\frac{\alpha}{2}\rceil+\frac{1}{2}\lfloor\frac{\alpha}{2}\rfloor+\frac{1}{2}\beta}\leqslant p^{\frac{3}{4}\alpha+\frac{1}{2}\beta+\frac{1}{4}}\leqslant p^{\frac{5}{4}\alpha+\frac{1}{4}+\frac{1}{2}\min(\beta_0,\alpha)-\frac{1}{2}\alpha_1}.
\end{split}
\end{equation} Here we use the estimate \begin{equation}\label{eq:quadcongruence}
\#\{x\bmod p^\gamma:x^2\equiv a\bmod p^\gamma\}\ll p^{\frac{1}{2}\gamma}
\end{equation} for any $a\bmod p^\gamma$. (Indeed, the bound $p^{\frac{1}{2}\gamma}$ is attained precisely when $a=0$, while for $a\neq 0$ it is actually $O(1)$.)
 If $\beta\geqslant \lfloor\frac{\alpha}{2}\rfloor$, then we estimate trivially \begin{equation}\label{eq:CJ22}
 	\CJ_2\ll p^{\alpha}\leqslant p^{\frac{5}{4}\alpha+\frac{1}{4}+\frac{1}{2}\min(\beta_0,\alpha)-\frac{1}{2}\alpha_1}.
 \end{equation}
 
Recall that $b_2$ is square-full. Gathering together \eqref{eq:CJ1} \eqref{eq:CJ21} \eqref{eq:CJ22}, we thus obtain
\begin{align*}
	\CJ_{\chi}(x_0;\bbeta_0;\bc)&\ll_\varepsilon b_1^{\frac{1}{2}+\varepsilon}\gcd(x_0,b_1)^\frac{1}{2}b_2^{\frac{5}{4}+\frac{1}{8}+\varepsilon}\gcd(x_0,b_2)^\frac{1}{2} |(\chi_{b_2})_*|^{-\frac{1}{2}}\\ &\ll b_1^{1+\varepsilon}b_2^{\frac{5}{4}+\frac{1}{8}+\varepsilon}\gcd(x_0,b_1b_2)^\frac{1}{2}|(\chi_{b_1})_*|^{-\frac{1}{2}}|(\chi_{b_2})_*|^{-\frac{1}{2}}\\ &\ll q_2^{\frac{11}{8}+\varepsilon}\gcd(x_0,q_2L^2)^\frac{1}{2}|\chi_*|^{-\frac{1}{2}}.
\end{align*}

Going back to \eqref{eq:CR}, we obtain that  %$$\CJ_{\chi}(x_0;\bbeta_0;\bc)\ll_{\varepsilon} q_2^{\frac{1}{2}+\varepsilon}\gcd(x_0,b_1)^{\frac{1}{2}}.$$
 \begin{align*}
	\CR_{q_2}(a_2;\bc)\ll_{\varepsilon} \frac{1}{L}\sum_{l\bmod L}\frac{q_2^{\frac{11}{8}+\varepsilon}}{ |\chi_*|^{\frac{1}{2}}}\sum_{\substack{x_0\bmod q_2L^2}}\gcd(x_0,q_2L^2)^\frac{1}{2}\sum_{\substack{\bbeta_0\bmod q_2L\\ \eqref{eq:divcond}\text{ holds}}}q_2^{d}\ll_\varepsilon \frac{q_2^{\frac{11}{8}+1+d+\varepsilon}}{|\chi_*|^{\frac{1}{2}}}.
\end{align*}  The proof of the claim \eqref{eq:claim} is thus finished.
\end{proof}

\subsection{The oscillatory integrals}
We choose \begin{equation}\label{eq:Q}
	Q=\frac{B}{L}.
\end{equation}
	The change of variables $\bt=\frac{L\by+\blambda_m}{B}$ in the integral \eqref{eq:Iqc} gives 
\begin{equation}\label{eq:IJ}
	\widehat{I}_{q}(w;\bc)=\left(\frac{B}{L}\right)^d \e_{qL^2}(\bc\cdot\blambda_m)\widehat{\CI}_{\frac{q}{Q}}^{*}\left(w;\frac{\bc}{L}\right),
\end{equation} where for any $\bb\in\BR^d$ and $r\in\BR_{>0}$ we define
\begin{equation}\label{eq:JqL}
	\widehat{\CI}_{r}^{*}(w;\bb):=\int_{\BR^d}w(\bt)h\left(r,\widehat{F}_B(\bt)\right)\e_r\left(-\bb\cdot\bt\right)\operatorname{d}\bt,
\end{equation}
and we write 
\begin{equation}\label{eq:GB}
	\widehat{F}_B(\bt):=F(\bt)-\frac{m}{B^2}.
\end{equation}
Let us also recall
\begin{equation}\label{eq:CIusual}
	\widehat{\CI}_{r}(w;\bb):=\int_{\BR^d}w(\bt)h\left(r,F(\bt)\right)\e_r\left(-\bb\cdot\bt\right)\operatorname{d}\bt,
\end{equation} which is the ``usual'' one that we have seen in \cite[\S7]{H-Bdelta} (or in a slightly different form in \cite[\S3.1]{PART1}). 

Heath-Brown proves the ``simple estimates'' and ``harder estimates'' \cite[\S7 \& \S8]{H-Bdelta} for $\widehat{\CI}_{r}(w;\bb)$ and its derivative with respect to $r$. (See also  \cite[Corollary 3.3]{PART1}.) As long as $m=O(B^{2-\theta})$ and $B$ is large enough, all the arguments in  \cite[\S7, \S8]{H-Bdelta} regarding $\widehat{\CI}_{r}^{*}(w;\bb)$ work similarly. We summarise these estimates as follows as they will be used repeatedly.
\begin{lemma}\label{le:Isimplehard}
	Assume \eqref{eq:mB2theta}.
	% for all $w\in\mathcal{C}_0(S)$,
	Then uniformly for all $\bb$ and $r$, $$\max\left(\widehat{\CI}_{r}(w;\bb),\widehat{\CI}_{r}^{*}(w;\bb)\right)\ll 1;\quad \text{``Trivial estimate''}.$$ If $\bb=\boldsymbol{0}$ we have
	$$\max\left(\widehat{\CI}_{r}(w;\boldsymbol{0}), r\frac{\partial \widehat{\CI}_{r}(w;\boldsymbol{0})}{\partial r},\widehat{\CI}_{r}^{*}(w;\boldsymbol{0}),r\frac{\partial \widehat{\CI}_{r}^{*}(w;\boldsymbol{0})}{\partial r}\right)\ll 1.$$
	If $\bb\neq \boldsymbol{0}$, we have
$$\max\left(\widehat{\CI}^{*}_{r}(w;\bb),\widehat{\CI}_{r}(w;\bb),r\frac{\partial \widehat{\CI}_{r}(w;\bb)}{\partial r}\right) \begin{cases}
		\ll_{N}  r^{-1}|\bb|^{-N};  & \quad \text{``Simple estimates''},\\
		\ll_{\varepsilon} \left(\frac{r}{|\bb|}\right)^{\frac{d-2}{2}-\varepsilon}; & \quad \text{``Harder estimates''}.
	\end{cases}$$
%$$\frac{\partial \widehat{\CI}_{r}^{*}(w;\bb)}{\partial r}\begin{cases}	\ll_{N} & r^{-2}|\bb|^{-N};\\	\ll_{\varepsilon} &r^{-1}\left(\frac{r}{|\bb|}\right)^{\frac{d-2}{2}-\varepsilon}. \end{cases}$$
\end{lemma}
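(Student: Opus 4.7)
The plan is to mirror the treatment of $\widehat{\CI}_r(w;\bb)$ given in \cite[\S7, \S8]{H-Bdelta}, with the observation that replacing the phase $F(\bt)$ by $\widehat{F}_B(\bt)=F(\bt)-m/B^2$ costs essentially nothing once $m=O(B^{2-\theta})$: the $\bt$-gradient and Hessian of $\widehat{F}_B$ coincide with those of $F$, and the additive shift $m/B^2=O(B^{-\theta})$ tends to $0$ as $B\to\infty$. Since the statements are claimed uniformly for large $B$, throughout I would fix $B\geqslant B_0(\theta)$ so that $|m/B^2|$ is smaller than any preassigned constant, placing the level sets $\{\widehat{F}_B=0\}$ in an arbitrarily small $C^k$-neighbourhood of $\{F=0\}$ inside $\operatorname{supp}(w)$.

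First, the trivial estimate $\widehat{\CI}_r^*(w;\bb)\ll 1$ is immediate: $w$ has fixed compact support, $h(r,\cdot)$ is uniformly bounded, and the exponential has modulus one. The bound for $\widehat{\CI}_r^*(w;\mathbf{0})$ and for $r\,\partial_r\widehat{\CI}_r^*(w;\mathbf{0})$ follows from the corresponding properties of $h(r,y)$ and $r\,\partial_r h(r,y)$ established in \cite[\S4]{H-Bdelta}, applied inside the integral over a fixed compact set: the bounds are independent of the value of $\widehat{F}_B$, hence identical to those for the homogeneous companion $\widehat{\CI}_r(w;\mathbf{0})$.

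Second, for the ``simple estimates'' when $\bb\neq \mathbf{0}$, I would integrate by parts in $\bt$, using that $\e_r(-\bb\cdot\bt)$ is an eigenfunction of the first-order operator $(2\pi i)^{-1}r\,|\bb|^{-2}(\bb\cdot\nabla_\bt)$. Each application of this operator to the smooth density $w(\bt)h(r,\widehat{F}_B(\bt))$ produces terms involving derivatives of $w$ (harmless, as $w$ is fixed) and derivatives in $\bt$ of $h(r,\widehat{F}_B(\bt))$. The latter only sees $\nabla_\bt\widehat{F}_B=\nabla F$ and higher $\bt$-derivatives of $F$ (the shift $-m/B^2$ vanishes under differentiation), so the inductive bounds of \cite[\S7]{H-Bdelta} transfer verbatim, each step supplying a factor $r/|\bb|$ together with at most one factor of $r^{-1}$ coming from $\partial_y h$. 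Iterating $N+1$ times yields $r^{-1}|\bb|^{-N}$; the same bound for $r\,\partial_r$ applied to $\widehat{\CI}_r(w;\bb)$ follows by differentiating under the integral sign and using the analogous bounds for $r\partial_r h$.

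Third, the ``harder estimates'' require a stationary phase / Fourier restriction argument. Here I would localise by a partition of unity to a neighbourhood of $\operatorname{supp}(w)\cap\{\widehat{F}_B\approx 0\}$, change variables so that $\widehat{F}_B$ becomes a coordinate, and then write $h(r,y)$ via its Fourier inversion / Mellin representation as in \cite[\S8]{H-Bdelta}. This reduces matters to an oscillatory integral with phase $-\bb\cdot\bt/r$ restricted to the level hypersurface, on which stationary points occur where $\bb\parallel \nabla F(\bt)$; non-degeneracy follows from the non-singularity of $F$ on its zero set. Applying van der Corput / stationary phase in the $d-1$ transverse variables and absorbing the small loss in $\varepsilon$ yields the claimed bound $(r/|\bb|)^{(d-2)/2-\varepsilon}$. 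The only step needing any care is that the implicit constants be taken uniformly in the small parameter $m/B^2$: this is guaranteed because, for $B$ large, the relevant $C^k$-norms of $\widehat{F}_B$ on $\operatorname{supp}(w)$ and the lower bound $|\nabla F|\gtrsim 1$ near the (slightly perturbed) level set differ from the $m=0$ case by $O(B^{-\theta})$. This uniformity is the main technical point that must be checked, and is the step I expect to require the most attention; everything else is a direct transcription of Heath-Brown's arguments.
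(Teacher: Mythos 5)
Your proposal is correct and follows essentially the same route as the paper: the paper (in a suppressed sketch) likewise reduces to Heath-Brown's class $\mathcal{H}$ and his Lemmas 14--16, 18, 22, observing that $\widehat{F}_B=F-m/B^2$ differs from $F$ by a constant $O(B^{-\theta})$ which leaves gradients and Hessians unchanged and perturbs the level sets by an amount vanishing as $B\to\infty$, so that the requirements of \cite[\S6--\S8]{H-Bdelta} are met uniformly. Your treatment of the individual cases (trivial bound from compact support, integration by parts for the simple estimates, stationary phase for the harder ones, with the uniformity in $m/B^2$ as the only point requiring care) is precisely what the paper leaves implicit by deferring to Heath-Brown.
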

\begin{comment}
\begin{proof}[Sketch of proof]
	For $f\in\CH$ as in \cite[(7.1)]{H-Bdelta}, %and $w\in\mathcal{C}_0(S)$, 
	we write $$\widetilde{\CI}_{r}^{*}(f,w;\bb):=\int_{\BR^3} w(\bt)f(r,\widehat{F}_B(\bt))\e(-\bb\cdot\bt)\operatorname{d}\bt.$$ 
	%As long as the family $\mathcal{C}_0(S)$ of weight functions is fixed, we can find $B_0$  such that for all $B\geqslant B_0$,
	the values of the polynomial $F$ shifted by $O(\frac{m}{B^2})=O(B^{-\theta})$ still satisfy all the requirements of \cite[\S6]{H-Bdelta}, and hence  for the homogeneous case work through. In particular \cite[\S7, \S8]{H-Bdelta} provides 
	$$\widehat{\CI}_{r}^{*}(w;\bb)\ll r^{-1}\left|\widetilde{\CI}_{r}^{*}(f_1,w_1;\bb)\right|,$$
	$$\frac{\partial \widehat{\CI}_{r}^{*}(w;\bb)}{\partial r}\ll r^{-2}\left| \widetilde{\CI}_{r}^{*}(f_2,w_2;\bb)\right|,$$ for appropriate $f_1,f_2\in\CH$, and for certain weight functions $w_1,w_2$.
	One then applies \cite[Lemmas 14, 15, 16, 18, 22]{H-Bdelta}.
\end{proof}
\end{comment}

The following lemma measures how close is
$\widehat{\CI}_{r}^{*}(w;\bb)$ to $\widehat{\CI}_{r}(w;\bb)$ as $B\to\infty$.
\begin{lemma}\label{le:CIBCI}
	Assume \eqref{eq:mB2theta}. We have, uniformly for any $0<r\ll 1$ and $\bb\in\BR^d$,
	$$\widehat{\CI}_{r}^{*}(w;\bb)=\widehat{\CI}_{r}(w;\bb)+O(r^{-2}B^{-\theta}).$$
Concerning $\bb=\boldsymbol{0}$ we have
$$\widehat{\CI}_{r}^{*}(w;\boldsymbol{0})=\widehat{\CI}_{r}(w;\boldsymbol{0})+O(B^{-\theta})+O_N(r^{N}).$$
\end{lemma}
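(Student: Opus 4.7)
The core idea is to view $\widehat{F}_B(\bt) = F(\bt) - m/B^2$ as a small perturbation of $F(\bt)$ in the second argument of $h$ and to apply Taylor's theorem together with the standard bounds on Heath-Brown's smooth weight $h$.

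\emph{First estimate.} By the fundamental theorem of calculus applied to $y \mapsto h(r,y)$,
$$h\bigl(r,\widehat{F}_B(\bt)\bigr) - h\bigl(r,F(\bt)\bigr) = -\frac{m}{B^2}\int_0^1 \frac{\partial h}{\partial y}\l(r,F(\bt) - s\frac{m}{B^2}\r)\,ds.$$
Inserting this into the definitions \eqref{eq:JqL} and \eqref{eq:CIusual} and swapping the order of integration expresses $\widehat{\CI}_r^*(w;\bb) - \widehat{\CI}_r(w;\bb)$ as a triple integral whose integrand is bounded in absolute value by $|w(\bt)|\cdot\sup_y|\partial h/\partial y(r,y)|\ll r^{-2}|w(\bt)|$, using the standard uniform bound $\partial h/\partial y \ll r^{-2}$ from \cite[Lemma 4]{H-Bdelta}. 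Since $w$ is compactly supported and $|m|\ll B^{2-\theta}$ by \eqref{eq:mB2theta}, one immediately obtains the first claimed bound $O(r^{-2}B^{-\theta})$.

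\emph{Second estimate.} For $\bb = \boldsymbol{0}$, the first estimate alone gives $O(r^{-2}B^{-\theta})$, which is weaker than $O(B^{-\theta})$ once $r$ is small, so an additional argument is required. The exponential factor in \eqref{eq:JqL} being trivial, one applies the coarea formula with respect to $F$, introducing
$$V(u) := \int_{\{F = u\}}\frac{w(\bt)}{|\nabla F(\bt)|}\,d\sigma(\bt).$$
Then
$$\widehat{\CI}_r^*(w;\boldsymbol{0}) - \widehat{\CI}_r(w;\boldsymbol{0}) = \int_{\BR}\bigl[V(u + m/B^2) - V(u)\bigr]h(r,u)\,du.$$
Since $F$ is non-degenerate and indefinite, $V$ is compactly supported, smooth away from $u=0$, and at worst of logarithmic growth near $u=0$. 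Away from $u=0$ the mean value theorem gives $|V(u+m/B^2) - V(u)|\ll m/B^2\ll B^{-\theta}$, and since $\int|h(r,u)|\,du\ll 1$ uniformly, this region contributes $O(B^{-\theta})$.

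\emph{Handling the singularity and main obstacle.} The remaining contribution comes from a small neighbourhood $|u|\le\eta$ of the singularity of $V$. Here one uses the integrability of $V$ together with the specific form of $h$ from \cite[\S4]{H-Bdelta}: for $r$ small, the mass of $h(r,\cdot)$ in $|u|\le\eta$ is $O_N(r^N)$ provided $\eta$ is chosen appropriately small relative to $r$ (exploiting either the support properties of $h(r,\cdot)$ or its rapid decay off the range dictated by the function class $\mathcal{H}$ of \cite[\S4]{H-Bdelta}). Combining the two contributions yields the error $O(B^{-\theta}) + O_N(r^N)$. The most delicate technical point is this localised estimate at $u=0$, which ultimately parallels how the singular integral $\mathcal{I}(w)$ in \eqref{eq:CIw} is defined and is well-controlled; if direct estimation proves inconvenient, one may alternatively split the $\bt$-integral according to whether $|\nabla F(\bt)|$ is bounded below and integrate by parts in the direction of $\nabla F$, following \cite[\S7--\S8]{H-Bdelta}, so as to transfer the $y$-derivative of $h$ onto $w/|\nabla F|^2$.
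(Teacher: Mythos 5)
Your first estimate is exactly the paper's argument: bound $h(r,F(\bt))-h(r,\widehat{F}_B(\bt))$ by $\sup\left|\frac{\partial h}{\partial y}\right|\cdot\frac{|m|}{B^2}\ll r^{-2}B^{-\theta}$ using the uniform bound $\frac{\partial h}{\partial y}\ll x^{-2}$ (which is \cite[Lemma 5]{H-Bdelta}, not Lemma 4, but this is cosmetic). For $\bb=\boldsymbol{0}$ you take a genuinely different route. The paper first invokes \cite[Lemma 13]{H-Bdelta} to write $\widehat{\CI}_{r}(w;\boldsymbol{0})=\CI(w)+O_N(r^N)$ and $\widehat{\CI}_{r}^{*}(w;\boldsymbol{0})=\CI^*(w)+O_N(r^N)$, and then compares the two limiting singular integrals directly, writing each as an integral over $(x_2,x_3)$ with $x_1$ solved from $F=0$, respectively $F=\frac{m}{B^2}$ (using that $\frac{\partial F}{\partial x_1}\gg 1$ on $\operatorname{Supp}(w)$), so that the difference is a graph shift of size $O(|m|/B^2)=O(B^{-\theta})$; the $O_N(r^N)$ in the statement is precisely the cost of passing through Lemma 13. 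You instead keep $h$ in place, apply the coarea formula to reduce the difference to $\int_{\BR}\bigl[V(u+m/B^2)-V(u)\bigr]h(r,u)\,\operatorname{d}u$, and use the mean value theorem together with $\int|h(r,u)|\operatorname{d}u\ll 1$. Both routes are sound, and yours is arguably more self-contained. One remark: in Heath-Brown's framework — and the paper's own proof makes this explicit — one may assume a fixed partial derivative of $F$ is bounded below on $\operatorname{Supp}(w)$, so your level-set density $V$ is smooth with bounded derivative on all of $\BR$; there is then no singularity at $u=0$, the mean value theorem applies globally, and your argument delivers $O(B^{-\theta})$ outright without the $O_N(r^N)$ term. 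The paragraph you devote to a putative logarithmic singularity of $V$ addresses a non-issue under these standing hypotheses; conversely, if one really allowed critical points of $F$ inside $\operatorname{Supp}(w)$, the asserted regularity and growth of $V$ and $V'$ near $u=0$ would need proof, which you do not supply.
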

\begin{proof}
	We have
	\begin{align*}
		\widehat{\CI}_{r}^{*}(w;\bb)-\widehat{\CI}_{r}(w;\bb)&=\int_{\BR^3}w(\bt)\left(h(r,F(\bt))-h(r,\widehat{F}_B(\bt))\right)\e_r\left(-\bb\cdot\bt\right)\operatorname{d}\bt\\ &\ll \sup_{\bt\in\operatorname{Supp}(w)}\left|\frac{\partial h}{\partial y}(r,F(\bt))\right|\frac{|m|}{B^2}.
	\end{align*}
By \cite[Lemma 5]{H-Bdelta}, the partial derivative above is $\ll r^{-2}$, so the difference above is $\ll r^{-2}B^{-\theta}$. 

We now deal with the case $\bb=\boldsymbol{0}$. 
According to \cite[Lemma 13]{H-Bdelta}, we have
$$\widehat{\CI}_{r}(w;\boldsymbol{0})=\CI(w)+O_N(r^N),$$
$$\widehat{\CI}_{r}^{*}(w;\boldsymbol{0})=\CI^*(w)+O_N(r^N),$$ where $\CI(w)$ is defined by \eqref{eq:CIw} and $\CI^*(w)$ is defined similarly.
Without loss of generality we may assume $\frac{\partial F}{\partial x_1}\gg 1$ in $\operatorname{Supp}(w)$, and we may write $$\CI(w)=\int_{\substack{\bx\in\BR^3\\ x_1\text{ solved by } F(\bx)=0}}w(\bx)\frac{\operatorname{d}\bx}{\frac{\partial F}{\partial x_1}(\bx)},\quad \CI^*(w)=\int_{\substack{\bx\in\BR^3\\ x_1\text{ solved by } F(\bx)=\frac{m}{B^2}}}w(\bx)\frac{\operatorname{d}\bx}{\frac{\partial F}{\partial x_1}(\bx)}.$$
Then
$$\CI(w)-\CI^*(w)\ll \sup_{\bx\in \operatorname{Supp}(w)}\left|\frac{\partial w}{\partial x_1}\left(\frac{\partial F}{\partial x_1}\right)^{-1}(\bx)\right| \frac{|m|}{B}\ll B^{-\theta}.$$
This finishes the proof.
\end{proof}

\section{Equidistribution of certain quadratic congruences}\label{se:saliesum}
In this section we assume $-m\Delta_F=\square$ and let \begin{equation}\label{eq:d0}
	d_0:=\sqrt{-m\Delta_F}.
\end{equation} Recall the polynomial $G_{l_1,l_2,\bc}(T)$ defined in \eqref{eq:G}. Then 
$$G_{l_1,l_2,\bc}(T)=(\Delta_F l_1 T)^2-(d_0l_2L^2)^2(-F^*(\bc)).$$  %if $l\neq 0$, 
So $G_{l_1,l_2,\bc}(T)$ is irreducible if and only if $-F^*(\bc)\neq\square$. 

 Let $\chi$ be a Dirichlet character.  For $X\gg 1$, our main focus is the following average of certain exponential sums against quadratic congruences (recall \eqref{eq:omega}):
\begin{equation}\label{eq:U}
	\CU_{l_1,l_2,\bc}(\chi;X):=\sum_{\substack{n\leqslant X\\ \gcd(n,m\Omega)=1}}\chi(n)\sum_{\substack{v\bmod n\\ G_{l_1,l_2,\bc}(v)\equiv 0\bmod n}}\e_n(v).
\end{equation} 
The crucial technical core of the proof of Theorem \ref{thm:qsum} is the following two results, treated separately depending on whether or not $-F^*(\bc)=\square$.

\begin{theorem}\label{thm:Fstarcneqsq}
	Let $\chi$ be a Dirichlet character. Let $l_1,l_2\in\BN$ be such that $l_i\mid (m\Omega)^\infty,i=1,2$.  
	%Let $l\in\BN$ and such that $l|\chi|\mid (m\Omega)^\infty$.
	\begin{enumerate}
		\item 	Let $\bc\in\BZ^3\setminus\boldsymbol{0}$ be such that $-F^*(\bc)\neq\square$. Then uniformly for such $\chi,l_1,l_2,\bc$, we have	$$\CU_{l_1,l_2,\bc}(\chi;X)=O_{\varepsilon}\left(|\bc|^{2+\varepsilon} \frac{X(\log\log X)^{\frac{5}{2}}}{(\log X)^{1-\frac{\sqrt{2}}{2}}}\right).$$
		\item 	Let $\bc\in\BZ^3\setminus\boldsymbol{0}$ be such that  $-F^*(\bc)$ is a non-zero square.  Then there exists $\Gamma(\chi;l_1,l_2;\bc)\in\BC$ such that 
		$$\CU_{l_1,l_2,\bc}(\chi;X)=\Gamma(\chi;l_1,l_2;\bc)X+O_\varepsilon\left((l_1l_2m|\bc|X)^{\varepsilon}(|\chi|^{\frac{11}{8}}l_1^{\frac{7}{8}}X^{\frac{63}{64}}+l_2|\bc|m^\frac{1}{2})\right).$$
		The constant $\Gamma(\chi;l_1,l_2;\bc)$ satisfies $$\Gamma(\chi;l_1,l_2;\bc)\ll_\varepsilon \frac{(l_2|\bc||\chi|)^\varepsilon }{|\chi_*|^{\frac{15}{32}}}\Upsilon_{1}(m)$$ where $\chi_*$ is the primitive character of modulus $|\chi_*|$  inducing $\chi$.
	\end{enumerate}
\end{theorem}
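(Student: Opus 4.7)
My plan is to handle both parts by first reducing the inner quadratic congruence to a pure squaring. Setting $w := \Delta_F l_1 v \bmod n$ (valid since $\gcd(n,m\Omega) = 1$ forces $\gcd(n, l_1 \Delta_F) = 1$, as $l_1 \mid (m\Omega)^\infty$ and $\Delta_F \mid \Omega$) and using $-m\Delta_F = d_0^2$, the condition $G_{l_1, l_2, \bc}(v) \equiv 0 \bmod n$ becomes $w^2 \equiv A \bmod n$ with $A := (d_0 l_2 L^2)^2(-F^*(\bc))$, and the inner sum becomes the Salié-type sum
$$\widetilde{\CU}(n) := \sum_{w^2 \equiv A \bmod n}\e_n(\overline{\Delta_F l_1}\, w).$$
The two cases of the theorem correspond precisely to whether $A$ is (Part (2)) or is not (Part (1)) a perfect square in $\BZ$; equivalently, whether $G_{l_1, l_2, \bc}$ factors over $\BZ$.

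\textbf{Part (2).}
When $-F^*(\bc) = D^2$, I would exploit the factorization of $G_{l_1,l_2,\bc}$ as $(\Delta_F l_1 T - d_0 l_2 L^2 D)(\Delta_F l_1 T + d_0 l_2 L^2 D)$. For squarefree $n$ coprime to the bad primes, the solutions $w$ split under CRT into $\pm d_0 l_2 L^2 D$ per prime of $n$. I would encode $\CU_{l_1,l_2,\bc}(\chi; X)$ as a Perron-type contour integral of the associated Dirichlet series $\sum_n \chi(n) \widetilde{\CU}(n) n^{-s}$, whose Euler product admits a simple pole at $s = 1$; the residue yields the main term $\Gamma(\chi;l_1,l_2;\bc) X$, with the $|\chi_*|^{-15/32}$ bound on $\Gamma$ coming from a Burgess-type estimate for the character sum in the residue. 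For the error, I would shift the contour past $\Re(s) = 1$ and invoke Burgess's subconvex bound for $L$-function factors involving $\chi$ and a Jacobi symbol twist determined by $D$: this produces the $X^{63/64}$ saving, while the $|\chi|^{11/8}$ and $l_1^{7/8}$ exponents reflect the Burgess-convexity losses, and $l_2 |\bc| m^{1/2}$ captures the diagonal contribution from $n$ sharing a factor with $2 d_0 l_2 L^2 D$.

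\textbf{Part (1).}
When $-F^*(\bc) \neq \square$, $G_{l_1, l_2, \bc}$ is irreducible over $\BZ$. The function $\rho_{\bc}(n)$ from \eqref{eq:rhocdef} is multiplicative and, by Chebotarev, supported on primes of density $\tfrac{1}{2}$ in the field $K := \BQ(\sqrt{-F^*(\bc)})$. For squarefree $n$ coprime to $2A$, I would use Salié's explicit evaluation to write $\widetilde{\CU}(n)$ as $\varepsilon_n \sqrt{n}$ times a character sum over the roots, reducing $\CU_{l_1,l_2,\bc}(\chi; X)$ to averaging $\chi(n)$ against this oscillatory expression weighted by $\rho_{\bc}(n)$. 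To extract cancellation I would then invoke the equidistribution of roots of quadratic congruences in short arithmetic progressions due to Hooley \cite{Hooley} and its refinement by Dartyge--Martin \cite{Dartyge-Martin}, combining their output via Cauchy--Schwarz with the Selberg--Delange asymptotic $\sum_{n \leq X} \rho_{\bc}(n)^2 \asymp X \log X$. This yields the advertised saving $(\log X)^{-(1 - \sqrt{2}/2)}$. The $|\bc|^{2+\varepsilon}$ factor will come from the divisor-like growth of $\rho_{\bc}$ on primes dividing $2 F^*(\bc)$, quantified in \eqref{eq:rhocsum}, and $(\log\log X)^{5/2}$ will absorb Selberg-sieve overhead.

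\textbf{Main obstacle.}
The hard step will be Part (1). Weil's pointwise bound gives only $\sqrt{n}$-cancellation per Salié sum, which is insufficient: after the $\rho_{\bc}$-weighting it produces no logarithmic saving at all. The $(\log X)^{-(1-\sqrt{2}/2)}$ gain genuinely requires the aggregate cancellation of the Hooley--Dartyge--Martin machinery, and implementing it uniformly in $\bc, l_1, l_2, m$ (while carrying the character twist $\chi$ and preserving the polynomial dependence $|\bc|^{2+\varepsilon}$ via Proposition \ref{prop:Kloosterman}) is the technical core of the argument.
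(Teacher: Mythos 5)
Your reduction of the quadratic congruence to $w^2\equiv A\bmod n$ (with $A=(d_0l_2L^2)^2(-F^*(\bc))$) and the observation that the two cases correspond to $A$ being or not being a square are correct, and your instinct to invoke Hooley for Part (1) and the factorization $\widehat{G}_{l_1,l_2,\bc}=(a_0T-b_0)(a_0T+b_0)$ for Part (2) matches the paper. However, there are two genuine gaps.

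\textbf{Part (2): the Perron/Euler-product route fails.} You propose to form the Dirichlet series $\sum_n\chi(n)\widetilde{\CU}(n)n^{-s}$, claim it has an Euler product with a simple pole, and extract the main term via contour shifting. But $\widetilde{\CU}(n)$ is \emph{not} multiplicative: under CRT, if $n=n_1n_2$ with $\gcd(n_1,n_2)=1$ and $w\equiv w_i\bmod n_i$, then $\e_n(w)=\e_{n_1}(\overline{n_2}w_1)\e_{n_2}(\overline{n_1}w_2)$, so the local factors are entangled by the multiplicative inverses $\overline{n_1},\overline{n_2}$ (this is exactly the ``almost multiplicativity'' \eqref{eq:Smult} of $S_{\bc}(h,n)$). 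There is no Euler product and Perron's formula does not apply. The paper instead follows Dartyge--Martin: after prescribing $\gcd(n,\CD)$ and writing $n=jk$ according to which factor of $\widehat{G}$ vanishes mod $j$ or mod $k$, one gets a bilinear sum in $(j,k)$ which is handled by completing the $j$-sum (Lemma~\ref{le:expsum}); the exponents $\tfrac{7}{8},\tfrac{11}{8},\tfrac{63}{64}$ come from Weil plus Iwaniec--Kowalski's $p$-adic stationary phase for square-full moduli, not from Burgess. Note also that you cite Dartyge--Martin for Part (1), but in the paper it is the engine of Part (2).

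\textbf{Part (1): the proposed input gives no log-saving.} You plan to use Cauchy--Schwarz against ``the Selberg--Delange asymptotic $\sum_{n\le X}\rho_{\bc}(n)^2\asymp X\log X$.'' But a Cauchy--Schwarz step against the \emph{unrestricted} second moment yields a bound of shape $X(\log X)^{1/2}$, which is a \emph{loss}, not a gain. The entire point of Hooley's device is the dichotomy $n_{\CX}\le X^{1/3}$ versus $n_{\CX}>X^{1/3}$, and the crucial sieve estimate is for $n_2$ with \emph{no prime factors $\le\CX$}, where one gets $\sum_{n_2\le y,\,p\mid n_2\Rightarrow p>\CX}\rho_{\bc}(n_2)^2\ll |\bc|^{2+\varepsilon}\,y\,(\log\log X)^4/\log X$ --- a $(\log X)^{-1}$-saving over the naive count. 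The exponent $1-\tfrac{\sqrt{2}}{2}$ arises very specifically from the surviving factor $\sum_{n_1}(\rho_{\bc}(n_1)/(n_1\phi(n_1)))^{1/2}\ll\prod_{p\text{ split}}(1+\sqrt{2}/p)\ll(\log X)^{\sqrt{2}/2}$, with the density-$\tfrac12$ split primes controlled by Siegel--Walfisz (not Selberg--Delange). Your proposal does not explain where the $\sqrt{2}/2$ comes from, and as stated the bound you would obtain does not recover the claimed saving.
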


\subsection{Proof of Theorem \ref{thm:Fstarcneqsq} (1)}
Our proof is an adaptation of Hooley's work \cite{Hooley} on equidistribution modulo one of irreducible polynomial congruences. 

Following \cite[p. 40]{Hooley}, for every $n\geqslant 2$ and for every $h\bmod n$,  let us introduce
$$ S_{\bc}(h,n):=\sum_{\substack{v\bmod n\\ v^2\equiv -F^*(\bc)\bmod n}}\e_{n}(hv).$$
By \cite[Lemma 2]{Hooley}, $S_{\bc}(h,n)$ is ``almost'' multiplicative, in the sense that  if $\gcd(n_1,n_2)=1$,
\begin{equation}\label{eq:Smult}
	S_{\bc}(h,n_1n_2)=S_{\bc}(h\overline{n_2},n_1)S_{\bc}(h\overline{n_1},n_2),
\end{equation}
where $\overline{n_1}n_1\equiv 1\bmod n_2$ and $\overline{n_2}n_2\equiv 1\bmod n_1$.

We recall \eqref{eq:rhocdef}.
Clearly for every $h\bmod n$, $$\left|S_{\bc}(h,n)\right|\leqslant \rho_{\bc}(n),$$
and \cite[Lemma 1]{Hooley} provides
\begin{equation}\label{eq:sumS}
	\sum_{h\bmod n}\left|S_{\bc}(h,n)\right|^2\leqslant n\rho_{\bc}(n),
\end{equation} for any $n\geqslant 2$.

We now have $$\CU_{l_1,l_2,\bc}(\chi;X)=\sum_{\substack{n\leqslant X\\ \gcd(n,m\Omega)=1}}\chi(n)S_{\bc}(\overline{\Delta_F l_1} d_0l_2L^2,n).$$
As in \cite[p. 43]{Hooley}, we define $$\CX=\CX(X):=X^{(48\e \log\log X)^{-1}}.$$
And for every $n\in\BN$, we write $$n_{\CX}:=\prod_{\substack{p\mid n\\ p\leqslant \CX}}p^{\operatorname{ord}_p(n)}.$$
Let us split the sum $\CU_{l_1,l_2,\bc}(\chi;X)$ into two parts according to the range of $n_{\CX}$: $$\Sigma_1:=\sum_{\substack{n\leqslant X,n_{\CX}>X^{\frac{1}{3}}\\ \gcd(n,m\Omega)=1}}\chi(n)S_{\bc}(\overline{\Delta_F l_1} d_0l_2L^2,n),~ \Sigma_2:=\sum_{\substack{n\leqslant X,n_{\CX}\leqslant X^{\frac{1}{3}}\\ \gcd(n,m\Omega)=1} } \chi(n)S_{\bc}(\overline{\Delta_F l_1} d_0l_2L^2,n).$$ 

We first consider $\Sigma_1$. Using \eqref{eq:rhoc}, the arguments in \cite[pp. 44--45]{Hooley} (based on the Hardy--Ramanujan inequality) offer \begin{equation}\label{eq:sigma1}
	\left|\Sigma_1\right|\leqslant \sum_{\substack{n\leqslant X\\ n_{\CX}>X^{\frac{1}{3}}}} \rho_{\bc}(n)\ll_\varepsilon |\bc|^{1+\varepsilon}\sum_{\substack{n\leqslant X\\ n_{\CX}>X^{\frac{1}{3}}}} 2^{\omega(n)}=O_\varepsilon\left(|\bc|^{1+\varepsilon}\frac{X}{\log X}\right).
\end{equation}

To deal with $\Sigma_2$, we make use of the multiplicativity \eqref{eq:Smult} to get
\begin{align*}
	\Sigma_2&=\underset{\substack{n\leqslant X,(n,m\Omega)=1\\ n_1=n_{\CX}\leqslant X^{\frac{1}{3}},n_2=n/n_{\CX}}}{\sum\sum}\chi(n)S_{\bc}(\overline{\Delta_F l_1 n_2} d_0l_2L^2,n_1)S_{\bc}(\overline{\Delta_F l_1 n_1}d_0l_2L^2,n_2)\\ &\leqslant \sum_{\substack{n_1\leqslant X^{\frac{1}{3}},\gcd(n_1,m\Omega)=1\\ p\mid n_1\Rightarrow p\leqslant\CX}}T_{n_1,\bc}(X), 
\end{align*} where $$T_{n_1,\bc}(X):=\sum_{\substack{n_2\leqslant \frac{X}{n_1}\\ p\mid n_2\Rightarrow p>\CX}}\rho_{\bc}(n_2)\left| S_{\bc}(\overline{\Delta_F l_1 n_2} d_0l_2L^2,n_1)\right|.$$

We pause to analyse the second moment of the summands in $T_{n_1,\bc}(X)$ for fixed $n_1$. Let $y$ be such that $y\geqslant X^{\frac{2}{3}}$.
The arguments in \cite[p. 46]{Hooley} (using sieve methods \cite[Lemma 7]{Hooley}) plus \eqref{eq:rhoc} offer
$$\sum_{\substack{n_2\leqslant y\\p\mid n_2\Rightarrow p>\CX}} \rho_{\bc}(n_2)^2\ll_\varepsilon |\bc|^{2+\varepsilon} y\frac{(\log\log X)^4}{\log X}.$$
Now for every fixed $n_1$, using \cite[Lemma 8]{Hooley} and \eqref{eq:sumS},
\begin{align*}
	\sum_{\substack{n_2\leqslant y\\ p\mid n_2\Rightarrow p>\CX}}\left| S_{\bc}(\overline{\Delta_F l_1 n_2} d_0l_2L^2,n_1)\right|^2 =&\sum_{\substack{h\bmod n_1\\ (h,n_1)=1}}\left|S_{\bc}(h,n_1)\right|^2\sum_{\substack{n_2\leqslant y,p\mid n_2\Rightarrow p>\CX\\ n_2\equiv \overline{h\Delta_F l_1}d_0l_2L^2\bmod n_1}}1\\ \ll &n_1\rho_{\bc}(n_1)\frac{y}{\phi(n_1)}\frac{\log\log X}{\log X}.
\end{align*}
Therefore, on letting $y=\frac{X}{n_1}$, by Cauchy-Schwarz lemma we have
\begin{align*}
	T_{n_1,\bc}(X) &\ll_\varepsilon |\bc|^{1+\varepsilon}\frac{X(\log\log X)^{\frac{5}{2}}}{\log X}\left(\frac{\rho_{\bc}(n_1)}{n_1\phi(n_1)}\right)^{\frac{1}{2}}.
\end{align*}
Hence  as in \cite[p. 47]{Hooley},
\begin{align*}
	\Sigma_2&\ll  |\bc|^{1+\varepsilon}\frac{X(\log\log X)^{\frac{5}{2}}}{\log X}\sum_{\substack{n_1\leqslant X^{\frac{1}{3}}\\ p\mid n_1\Rightarrow p\leqslant\CX}}\left(\frac{\rho_{\bc}(n_1)}{n_1\phi(n_1)}\right)^{\frac{1}{2}}.\\ &\ll_\varepsilon |\bc|^{1+\varepsilon}\frac{X(\log\log X)^{\frac{5}{2}}}{\log X}\sum_{l\leqslant X}\left(\frac{\rho_{\bc}(l)}{l\phi(l)}\right)^{\frac{1}{2}}\\ &\ll_\varepsilon |\bc|^{1+\varepsilon} 2^{\omega(F^*(\bc))}\frac{X(\log\log X)^{\frac{5}{2}}}{\log X}\prod_{\substack{p\leqslant X\\ p\nmid 2F^*(\bc)}}\left(1+\frac{\rho_{\bc}(p)^{\frac{1}{2}}}{p}\right).
\end{align*}

Our focus in the remaining of the proof is the product over $p$, denoted by $\Pi_{\bc}(X)$ say. Note that for every $p\nmid 2 F^*(\bc)$, by \eqref{eq:Nagell-Ore}, $$\rho_{\bc}(p)=\begin{cases}
	2 &\text{ if }\left(\frac{-F^*(\bc)}{p}\right)=1;\\ 0 &\text{ otherwise}.
\end{cases}$$
Let $K_{\bc}:=\BQ(\sqrt{-F^*(\bc)})$ be the quadratic field extension generated by the roots of $G_{l_1,l_2,\bc}(T)$, and let $$\CP_{\bc}(X):=\{p\leqslant X: p\text{ is unramified and splits in } K_{\bc}\}.$$ In particular $$\Pi_{\bc}(X)=\prod_{\substack{p\in\CP_{\bc}(X)}}\left(1+\frac{\sqrt{2}}{p}\right).$$
Our analysis diverges into the relative size of $|\bc|$ and $\log X$.

\textbf{Case I.} Assume $|\bc|\leqslant \log X$.
Applying the Siegel--Walfisz Theorem (see e.g. \cite[Corollary 5.29]{Iwaniec-Kolwalski}) regarding all quadratic residues modulo $-4F^*(\bc)$, for any $A>2$, we have
\begin{align*}
	\#\CP_{\bc}(X)&=\frac{1}{2}\frac{X}{\log X}+O_A\left(|F^*(\bc)|\frac{X}{(\log X)^A}\right)\\ &=\frac{1}{2}\frac{X}{\log X}+O_A\left(\frac{X}{(\log X)^{A-2}}\right),
\end{align*} by our assumption on the size of $|\bc|$. We may now fix $A=4$.
By partial summation we have
\begin{align*}
	\sum_{\substack{p\in\CP_{\bc}(X)}}\frac{1}{p}&=\sum_{k=2}^{X}\frac{\#\CP_{\bc}(k)-\#\CP_{\bc}(k-1)}{k}+O(1)\\ &=\sum_{k=1}^X\frac{\#\CP_{\bc}(k)}{k^2}+O(1)\\ &=\sum_{k=1}^X\left(\frac{1}{2k\log k}+O\left(\frac{1}{k(\log k)^2}\right)\right)+O(1)\\ &=\frac{1}{2}\log\log X+O(1).
\end{align*}
We therefore conclude, uniformly for such $\bc$,
\begin{align*}
	\Pi_{\bc}(X)&=\exp\left(\sum_{p\in\CP_{\bc}(X)}\log\left(1+\frac{\sqrt{2}}{p}\right)\right)\\ &=O\left(\operatorname{exp}\left(\frac{\sqrt{2}}{2}\log\log X\right)\right)=O\left((\log X)^{\frac{\sqrt{2}}{2}}\right).
\end{align*}

\textbf{Case II.} Assume that $|\bc|>\log X$. We estimate trivially 
$$\Pi_{\bc}(X)\leqslant \prod_{\substack{p\leqslant X}}\left(1+\frac{\sqrt{2}}{p}\right)=O\left((\log X)^{\sqrt{2}}\right)=O\left(|\bc|(\log X)^{\frac{\sqrt{2}}{2}}\right),$$ thanks to our assumption on the size of $|\bc|$. 

So we conclude that in any case $$\Pi_{\bc}(X)=O\left(|\bc|(\log X)^{\frac{\sqrt{2}}{2}}\right),$$
and hence 
\begin{equation}\label{eq:sigma2}
	\Sigma_2\ll_\varepsilon |\bc|^{1+\varepsilon}\frac{X(\log\log X)^{\frac{5}{2}}}{\log X} \Pi_{\bc}(X)\ll_{\varepsilon} |\bc|^{2+\varepsilon}\frac{X(\log\log X)^{\frac{5}{2}}}{(\log X)^{1-\frac{\sqrt{2}}{2}}}. 
\end{equation}
We finally combine the estimates \eqref{eq:sigma1} \eqref{eq:sigma2} to yield the desired bound.
\qed

\subsection{Proof of Theorem \ref{thm:Fstarcneqsq} (2)}
Our proof is inspired by the work of Dartyge--Martin \cite[Theorem 1]{Dartyge-Martin}. In addition to the character sums, the key variant is that we need to control the dependency on $l,m$ and $\bc$ in the error term, %(since they are going to become summands later on), 
while dealing with the extra gcd condition on $n$.

We begin with the following slight generalisation of \cite[Lemma 1]{Dartyge-Martin}  concerning mixed exponential sums involving a multiplicative inverse.
\begin{lemma}\label{le:expsum}
	Let $q,s\in \BN$ be such that $\gcd(s,q)=1$ and let $\chi$ be a Dirichlet character such that $|\chi|$ divides $q$. Then there exists $\Theta(q;s,t,\chi)\in\BC$ such that, uniformly for any $t\in\BZ$,
	$$\sum_{\substack{n\leqslant X\\ \gcd(n,sq)=1}}\chi(n)\e_q\left(t\overline{n}\right)=\Theta(q;s,t,\chi)X+O_\varepsilon\left( s^\varepsilon q^{\frac{7}{8}+\varepsilon}\gcd(q,t)^\frac{1}{2}\right).$$
	The constant $\Theta(q;s,t,\chi)$ satisfies $$\Theta(q;s,t,\chi) 	\begin{cases}
	\leqslant \frac{1}{\phi(q/(t,q))} &\text{ if } \chi \text{ is principal}; \\
 \ll_\varepsilon  \frac{q^{\varepsilon}\gcd\left(t,\frac{q}{|\chi_*|}\right)}{q}|\chi_*|^{\frac{1}{2}}&\text{ otherwise}. \end{cases}$$ 
\end{lemma}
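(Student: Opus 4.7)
My strategy is to adapt \cite[Lemma~1]{Dartyge-Martin} by combining a M\"obius inversion to detach $\gcd(n,s)=1$ with a P\'olya--Vinogradov completion modulo $q$, while carefully tracking the character $\chi$. Since $\gcd(s,q)=1$, every divisor $d\mid s$ is invertible mod $q$, and
$$\sum_{\substack{n\leqslant X\\(n,sq)=1}}\chi(n)\e_q(t\overline{n})=\sum_{d\mid s}\mu(d)\chi(d)\sum_{\substack{m\leqslant X/d\\(m,q)=1}}\chi(m)\e_q(t\overline{d}\,\overline{m}).$$
The substitution $r'\equiv rd\pmod{q}$ on the inner sum (a bijection of $(\BZ/q\BZ)^{*}$) eliminates the factor $\overline{d}$ inside $\e_q$ and pulls out a factor $\overline{\chi(d)}$, provided $\gcd(d,|\chi|)=1$ (otherwise the corresponding term vanishes). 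The inner sums for distinct $d$ thus all coincide with a universal sum $\sum_{m\leqslant X/d,(m,q)=1}\chi(m)\e_q(t\overline{m})$.

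Next I would complete this sum modulo $q$: extending the summand by $0$ on non-units,
$$\sum_{\substack{m\leqslant Y\\(m,q)=1}}\chi(m)\e_q(t\overline{m})=\frac{Y}{q}\,K_0+\sum_{1\leqslant|h|\leqslant q/2}\Phi(h;Y)\,K_h,$$
where $K_0:=\sum_{r\bmod q,(r,q)=1}\chi(r)\e_q(t\overline{r})$, $K_h:=\sum_{r\bmod q,(r,q)=1}\chi(r)\e_q(t\overline{r}+hr)$ and $|\Phi(h;Y)|\ll 1/|h|$. The coefficient of $X$ then yields
$$\Theta(q;s,t,\chi)=\frac{K_0}{q}\prod_{\substack{p\mid s\\p\nmid|\chi|}}(1-1/p),$$
and the remainder is a weighted double sum of twisted Kloosterman sums $K_h$ over $h\neq 0$ and $d\mid s$.

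Since the Euler product in $s$ lies in $(0,1]$, the bound on $\Theta$ reduces to estimating $|K_0|/q$. When $\chi$ is principal, $K_0$ is the Ramanujan sum $c_q(t)=\mu(q/\gcd(t,q))\phi(q)/\phi(q/\gcd(t,q))$, giving $|\Theta|\leqslant 1/\phi(q/\gcd(t,q))$ directly. When $\chi$ is non-principal with primitive inducing character $\chi_*$ of modulus $|\chi_*|\mid q$, I would factor $q=|\chi_*|q'$ via the Chinese remainder theorem and decompose $K_0$ into a twisted Sali\'e/Gau\ss\ sum modulo $|\chi_*|$ (of size $|\chi_*|^{1/2}$) and a Ramanujan-type sum modulo $q'$ (of size $\gcd(t,q')=\gcd(t,q/|\chi_*|)$), yielding the second bound on $\Theta$. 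For the error, the twisted Kloosterman sum $K_h$ admits the Weil-type bound $|K_h|\ll_\varepsilon q^{1/2+\varepsilon}\gcd(t,h,q)^{1/2}$, obtained by CRT-decomposition into prime-power components and applying Weil/Deligne at each, together with the explicit evaluations of \cite[Lemmas~12.2,~12.3]{Iwaniec-Kolwalski} at ramified places, exactly as in the proof of Proposition~\ref{prop:CAq2}. Coupling with the elementary estimate $\sum_{1\leqslant h\leqslant q/2}\gcd(t,h,q)^{1/2}/h\ll q^\varepsilon\gcd(t,q)^{1/2}$ and summing over $d\mid s$ gives a total error $O_\varepsilon(s^\varepsilon q^{1/2+\varepsilon}\gcd(q,t)^{1/2})$, which is stronger than and thus implies the claimed $q^{7/8+\varepsilon}$. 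The main technical obstacle is uniform control of $K_h$ over all characters $\chi$ at composite $q$ with $\chi$ of intermediate conductor; this mirrors the case-by-case local analysis in Proposition~\ref{prop:CAq2}, and the slackness in the stated exponent $7/8$ is presumably what allows a clean statement without finer bookkeeping of the local $\chi$-types.
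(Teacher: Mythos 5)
Your overall plan mirrors the paper's: complete modulo $q$, separate the $a=0$ (or $h=0$) frequency and evaluate it as a Ramanujan sum (principal $\chi$) or Gauss sum (non-principal $\chi$), and treat the $a\neq 0$ frequencies locally by CRT, using the Weil bound at square-free primes and \cite[Lemmas~12.2, 12.3]{Iwaniec-Kolwalski} at square-full prime powers. The preliminary M\"obius inversion in $s$ is a cosmetic variant of the paper's step of keeping $\gcd(n,s)=1$ inside the completed sum and detecting it via $\sum_{d\mid s}\mu(d)$ afterwards; neither is simpler than the other. (Incidentally, the substitution $m\mapsto md \bmod q$ does not map the truncated range $m\leqslant X/d$ to itself, so the assertion that all inner truncated sums ``coincide'' is not literally correct; the change of variable is only valid on the completed sum over residues modulo $q$. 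This is a minor sloppiness, not fatal.)

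There is, however, a genuine gap in the error analysis. You claim a Weil-type bound $|K_h|\ll_\varepsilon q^{1/2+\varepsilon}\gcd(t,h,q)^{1/2}$ for the $\chi$-twisted Kloosterman sums and deduce a total error $O_\varepsilon(s^\varepsilon q^{1/2+\varepsilon}\gcd(q,t)^{1/2})$, commenting that the stated exponent $7/8$ is ``slackness.'' This is false in general, and $7/8$ is essentially the correct exponent for the worst $h$. The point is that $\chi$ here is \emph{not} assumed primitive modulo $q$. At a prime power $p^\alpha$ with $\alpha\geqslant 2$, the reduction via \cite[Lemmas~12.2, 12.3]{Iwaniec-Kolwalski} leaves one counting solutions of a quadratic congruence $a'x^2-b'x+t'\equiv 0\bmod p^{\lfloor\alpha/2\rfloor}$, where $b'$ is determined by the local $p$-part of $\chi$, $a'$ comes from the completion frequency $h$, and $t'$ is the given datum. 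For the particular frequency $a'\equiv \overline{4t'}\,b'^2\bmod p^{\lfloor\alpha/2\rfloor}$ the discriminant $b'^2-4a't'$ vanishes to maximal $p$-adic order, so by \eqref{eq:quadcongruence} the quadratic congruence has about $p^{\frac{1}{2}\lfloor\alpha/2\rfloor}$ solutions, and one only gets $|K_h|\ll p^{\frac34\alpha+\frac14}$, not $p^{\alpha/2}$ --- and this extra size is \emph{not} absorbed by $\gcd(t,h,q)^{1/2}$, since $b'$ is controlled by $\chi$ rather than by $t$ or $h$. Optimizing $p^{\frac34\alpha+\frac14}\leqslant p^{\frac78\alpha}$ for $\alpha\geqslant 2$ and multiplying by the square-free contribution yields the $q^{7/8+\varepsilon}$ uniform bound, exactly as in the paper's derivation of \eqref{eq:bdsqta}. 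So your error estimate needs to be run with $7/8$ rather than $1/2$; the conclusion of the lemma is then as stated, but the claimed improvement to $q^{1/2+\varepsilon}$ is not available by this method.
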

\begin{proof}
We proceed via a standard completing process, which is an adaptation of \cite[Lemma 1]{Dartyge-Martin}. We have
\begin{equation}\label{eq:completing}
\begin{split}
	\sum_{\substack{n\leqslant X\\ \gcd(n,sq)=1}}\chi(n)\e_q\left(t\overline{n}\right)&=
	\sum_{\substack{x\bmod q\\ \gcd(x,q)=1}}\chi(x)\e_q(t\overline{x})\sum_{\substack{n\leqslant X,n\equiv x\bmod q\\ \gcd(n,s)=1}}1\\ &=\frac{1}{q}\sum_{a\bmod q}S_{q}(t;a)\sum_{\substack{n\leqslant X\\ \gcd(n,s)=1}}\e_q(an),
\end{split} \end{equation} where for fixed $a\bmod q$, we let $$S_{q}(t;a):=\sum_{\substack{x\bmod q\\ \gcd(x,q)=1}}\chi(x)\e_q(t\overline{x}-ax).$$

We first analyse the main term $a=0$ in \eqref{eq:completing}. 
It is elementary to see that \begin{equation}\label{eq:bds1}
	\#\{n\leqslant X:\gcd(n,s)=1\}=\left(\sum_{d\mid s}\frac{\mu(d)}{d}\right) X+O_\varepsilon(s^\varepsilon).
\end{equation}

We first assume that $\chi$ is principal. Then $S_{q}(t;0)$ is a Ramanujan sum, and can be evaluated as \begin{equation}\label{eq:bds2}
	S_{q}(t;0)=\sum_{\substack{x\bmod q\\(x,q)=1}} \e_{q}\left(tx\right) =\mu\left(\frac{q}{(q,t)}\right)\frac{\phi(q)}{\phi(q/(t,q))}.
\end{equation} 

We now assume that $\chi$ is not principal.
Then $S_{q}(t;0)$ is a Gauss sum, and by \cite[Lemma 3.2]{Iwaniec-Kolwalski},
\begin{align*}
	S_{q}(t;0)&=\sum_{x\bmod q} \chi(x)^c\e_{q}\left(tx\right)\\& =\left(\sum_{x\bmod |\chi_*|}\chi_*(x)^c\e_{|\chi_*|}(x)\right)\left(\sum_{d\mid \gcd\left(t,\frac{q}{|\chi_*|}\right)}d\chi_*\left(\frac{t}{d}\right)\mu\left(\frac{q}{d|\chi_*|}\right)\right).
\end{align*}
It satisfies \begin{equation}\label{eq:bds3}
	S_{q}(t;0)\ll |\chi_*|^{\frac{1}{2}} \sum_{d\mid\gcd\left(t,\frac{q}{|\chi_*|}\right)}d\ll_{\varepsilon} q^{\varepsilon}\gcd\left(t,\frac{q}{|\chi_*|}\right)|\chi_*|^{\frac{1}{2}}.
\end{equation}

 For the terms with $a\neq 0$ in \eqref{eq:completing}, we proceed as the proof of Proposition \ref{prop:CAq2}. We factorize $$q=q_1'q_2'$$ with $q_1'$ square-free, $q_2'$ square-full and $\gcd(q_1',q_2')=1$, so that we decompose $S_{q}(t;a)$ into a mod $q_1'$-sum $\CJ_1'$ and a mod $q_2'$-sum $\CJ_2'$. For $\CJ_1'$, regardless of $\chi$ being principal or not, by the Hasse--Weil estimate \cite{Weil}, we have $$\CJ_1'\ll \tau(q_1')\gcd(a,t,q_1')^\frac{1}{2}q_1^{\prime\frac{1}{2}}.$$  
Now for $\CJ_2'$, we may again assume that $q_2'=p^\alpha$ with $\alpha\geqslant 2$. By \cite[Lemmas 12.2 \& 12.3]{Iwaniec-Kolwalski}, we are led to 
$$\CJ_2'\leqslant p^{\lceil\frac{\alpha}{2}\rceil}\#\{x\bmod p^{\lfloor \frac{1}{2}\alpha \rfloor}:p\nmid x,ax^2-bx+t\equiv 0 \bmod p^{\lfloor \frac{1}{2}\alpha \rfloor}\},$$ where $b=b(\chi)\bmod p^{\lceil\frac{\alpha}{2}\rceil}$ depends on $\chi$. Let $\beta:=\operatorname{ord}_p(\gcd(a,b,t))$. 
If $\beta<\lfloor \frac{1}{2}\alpha \rfloor$, on writing $a':=\frac{a}{p^\beta},b':=\frac{b}{p^\beta},t':=\frac{t}{p^\beta}$, we have
$$\CJ_2'\leqslant p^{\lceil\frac{1}{2}\alpha\rceil+\beta}\#\{x'\bmod p^{\lfloor \frac{1}{2}\alpha \rfloor-\beta}:p\nmid x',a'x^2-b'x+t'\equiv 0 \bmod p^{\lfloor \frac{1}{2}\alpha \rfloor-\beta}\}.$$
If moreover $p\mid a$, then $p\nmid b'^2-4a't'$ (otherwise $p\mid \gcd(a',b',t')$) and the number of such $x'$ is $O(1)$. Hence in this case $\CJ_2'\ll p^{\lceil\frac{1}{2}\alpha\rceil+\beta}$.
Otherwise if $p\nmid a'$, then using \eqref{eq:quadcongruence}, we have $$\CJ_2'\ll p^{\lceil\frac{1}{2}\alpha\rceil+\frac{1}{2}\lfloor\frac{\alpha}{2}\rfloor+\frac{1}{2}\beta}\leqslant p^{\frac{3}{4}\alpha+\frac{1}{2}\beta+\frac{1}{4}}\leqslant p^{\frac{3}{4}\alpha+\frac{1}{4}}\gcd(p^\alpha,a,t)^\frac{1}{2}.$$
If $\beta\geqslant \lfloor \frac{1}{2}\alpha \rfloor$ we estimate trivially $$\CJ_2'\ll p^{\alpha}\leqslant p^{\frac{3}{4}\alpha+\frac{1}{4}}\gcd(p^\alpha,a,t)^\frac{1}{2}.$$
Therefore we conclude that \begin{equation}\label{eq:bdsqta}
	S_{q}(t;a)\ll_{\varepsilon} q^{\frac{7}{8}+\varepsilon}\gcd(q,a,t)^\frac{1}{2}\leqslant q^{\frac{7}{8}+\varepsilon}\gcd(q,t)^\frac{1}{2}.
\end{equation}

Using \eqref{eq:bdsqta}, the contribution from the terms $a\neq 0$ is,
\begin{align*}
	\ll_\varepsilon & q^{\frac{7}{8}+\varepsilon}\gcd(q,t)^\frac{1}{2} \sum_{\substack{a\bmod q\\a\neq 0}}\frac{1}{q}\left|\sum_{\substack{n\leqslant X\\ \gcd(n,s)=1}}\e_q(an)\right|\\ \ll & q^{\frac{7}{8}+\varepsilon}\gcd(q,t)^\frac{1}{2} \sum_{d\mid s}\mu^2(d)\times \frac{1}{q}\sum_{\substack{a\bmod q\\ a\neq 0}}\left|\sum_{n'\leqslant X/d}\e_q(adn')\right|.
\end{align*} We recall that $\gcd(s,q)=1$, and hence $\gcd(d,q)=1$, so the change of variable $a\mapsto ad$ yields that the inner $a$-sum and $n'$-sum is, uniformly for $d\mid s$, $$\sum_{\substack{b\bmod q\\ b\neq 0}} \left|\sum_{n'\leqslant X/d}\e_q(bn')\right|\ll 
\sum_{c=1}^{\frac{q}{2}}\frac{q}{c}\ll q\log q,$$ cf. the argument in \cite[p. 5]{Dartyge-Martin}. 
%Note that $\sum_{d\mid s}\mu^2(d)=2^{\omega(s)}$, 
So the overall contribution is
$$\ll_\varepsilon  s^\varepsilon q^{\frac{7}{8}+\varepsilon}\gcd(q,t)^\frac{1}{2}.$$

We now finish the proof by taking $$\Theta(q;s,t,\chi):=\frac{S_{q}(t;0)}{q}\sum_{d\mid s}\frac{\mu(d)}{d}, $$  and the bound results from \eqref{eq:bds2} and \eqref{eq:bds3}. (Note that $\sum_{d\mid s}\frac{\mu(d)}{d}=\prod_{p\mid s}\left(1-\frac{1}{p}\right)\leqslant 1$.)
\end{proof}

	We now enter the proof of Theorem \ref{thm:Fstarcneqsq} (2) by setting some notational preliminaries.
	We write $$ -F^*(\bc)=\CN(\bc)^2$$ for $\CN(\bc)\in\BN$, and 
	\begin{equation}\label{a0b0}
		a_0:=\frac{l_1 \Delta_F}{\gcd(\Delta_Fl_1,d_0l_2L^2\CN(\bc))},\quad b_0:=\frac{d_0l_2L^2\CN(\bc)}{\gcd(\Delta_Fl_1,d_0l_2L^2\CN(\bc))}.
	\end{equation}
	We shall consider the auxiliary polynomial $$\widehat{G}_{l_1,l_2,\bc}(T):=(a_0 T+b_0)(a_0 T-b_0).$$ The discriminant is $$\CD:=2|a_0b_0|\asymp l_1l_2|\bc| |m|^\frac{1}{2}.$$ Note that $$G_{l_1,l_2,\bc}(T)=\gcd(\Delta_F l_1,d_0l_2L^2\CN(\bc))^2\widehat{G}_{l_1,l_2,\bc}(T)$$ and $\gcd(a_0,b_0)=1$.
	Thanks to the condition $\gcd(n,m\Omega)=1$, we have
	$$G_{l_1,l_2,\bc}(u)\equiv 0\bmod n\Leftrightarrow \widehat{G}_{l_1,l_2,\bc}(u)\equiv 0\bmod n.$$
	(So that the coefficients have gcd one.)
	In view of \cite[Lemmas 2\&3]{Dartyge-Martin}, upon prescribing $\gcd(n,\CD)$ then separating the congruence conditions of the two factors of $\widehat{G}_{l_1,l_2\bc}$, we have 
	\begin{align*}
		\CU_{l_1,l_2,\bc}(\chi;X)&=\sum_{\substack{g\mid \CD\\ (g,a_0)=1}}\mu^2(g)\sum_{\substack{n\leqslant \frac{X}{g}\\ (n,\CD m \Omega)=1}}\chi(n)\sum_{\substack{r\bmod ng\\ \widehat{G}_{l,\bc}(r)\equiv 0\bmod ng}}\e_{ng}\left(r\right)\\ &=\sum_{\substack{g\mid \CD\\ (g,a_0)=1}}\mu^2(g)\sum_{\substack{n\leqslant \frac{X}{g}\\ (n,\CD m \Omega)=1}}\chi(n)\sum_{\substack{j,k:n=jk\\ (j,k)=(j,a_0)=(k,a_0)=1}}\e_{ng}\left(r_{j,k,g}\right),
	\end{align*} where $r_{j,k,g}\bmod ng$ is uniquely determined by the congruence conditions
	$$a_0r_{j,k,g}+b_0\equiv 0\bmod k, \quad a_0r_{j,k,g}-b_0\equiv 0\bmod j,\quad r_{j,k,g}\equiv b_0\overline{a_0}\bmod g.$$
	%(The only difference between \cite[Lemma 2]{Dartyge-Martin} is just adding the condition $\gcd(n,m\Omega)=1$.)
	
	If $X\leqslant b_0$, then we sum everything trivially and obtain
	$$\CU_{l_1,l_2,\bc}(\chi;X)\ll_\varepsilon \sum_{\substack{g\mid \CD\\ (g,a_0)=1}}\mu^2(g) \sum_{n\leqslant b_0}n^\varepsilon \ll b_0^{1+\varepsilon}.$$
	We next assume $\frac{X}{g}> b_0$ for certain $g\mid \CD$ and we separate the sum over $j$ and $k$ respectively. Let $\mathfrak{S}_1(\chi;X)$ be the sum with the restriction $k\leqslant\sqrt{ \frac{X}{g} }$ and let $\mathfrak{S}_2(\chi;X)$ denote the sum with the remaining range. Proceeding as in \cite[p. 7, the equation above (7)]{Dartyge-Martin} we obtain
	$$\mathfrak{S}_1(\chi;X)=\sum_{\substack{g\mid \CD\\ (g,a_0)=1}}\mu^2(g)\sum_{\substack{k\leqslant\sqrt{\frac{X}{g}}\\ (k,\CD m \Omega)=1}}\chi(k)\sum_{\substack{j\leqslant \frac{X}{gk}\\ (j,k\CD m \Omega)=1}}\chi(j)\e_{a_0k}\left(\widehat{t}\cdot\overline{j}\right)\e_{a_0gjk}(b_0).$$
	Here %$\overline{j}$ denotes the multiplicative inverse, and  
	$\widehat{t}=\widehat{t}(g,k,a_0,b_0;\bc)\bmod a_0k$ which is independent of $j$ and satisfies  $\gcd(\widehat{t},a_0k)=1$. %We emphasize here that the error term is uniform.
	Thanks to $\e_{a_0gjk}(b_0)=1+O\left(\frac{b_0}{a_0gjk}\right)$,  we obtain further
	%In the range $a_0 gjk>b_0$, 
	%\footnote{Zhizhong: In the article \cite{Dartyge-Martin}, the exponential inside of the line above Eq. (7) contains $g$ in the denominator. However, it is shown in the bottom of the page that the numerator is divided by $g$. On the other hand, the passage to Eq. (7) introduces $\Delta$ and $t'$. The latter is claimed to be a multiple of $\Delta$ but coprime to $c$ and $k$. However it happens that $\Delta$ itself can be a multiple of $c$ (one simple case being $a=c,b=-d=1,\Delta=2c$). So I think the claim is false. But one can directly apply Eq. (4) to the sum above Eq. (7) whose period is $ck$, since $g$ divides the term inside the parenthesis, which is coprime to $c$ and $k$. This might affect the final leading constant (perhaps only a tiny bit!) of \cite[Theorem 1]{Dartyge-Martin}.}
\begin{equation}\label{eq:S1chara}
	\mathfrak{S}_1(\chi;X)=\sum_{\substack{g\mid \CD\\ (g,a_0)=1}}\mu^2(g)\sum_{\substack{k\leqslant\sqrt{\frac{X}{g}}\\ (k,\CD m \Omega)=1}}\chi(k)\sum_{\substack{j\leqslant \frac{X}{gk}\\ (j,k\CD m \Omega)=1}}\chi(j)\e_{a_0k}\left(\widehat{t}\cdot\overline{j}\right)+O_\varepsilon(b_0^{1+\varepsilon}(\log X)^2).
\end{equation} 

 We set $$q:=\lcm(|\chi|,a_0k),\quad s:=\prod_{p\mid m\CD\Omega,p\nmid q}p,\quad t:=\frac{q}{a_0k}\widehat{t}.$$ Observe that $$\gcd(q,t)=\frac{q}{a_0k}\leqslant |\chi|.$$ On applying Lemma \ref{le:expsum}, there exists $\Theta(k;\chi)$ depending also on $a_0,b_0$ such that the inner $j$-sum in \eqref{eq:S1chara} equals
$$\frac{X}{gk}\Theta(k;\chi)+O_\varepsilon\left((m\CD)^\varepsilon|\chi|^{\frac{11}{8}+\varepsilon}(a_0k)^{\frac{7}{8}+\varepsilon}\right),$$ whence $$\mathfrak{S}_1(\chi;X)=X\sum_{\substack{g\mid \CD\\ (g,a_0)=1}}\frac{\mu^2(g)}{g}\sum_{\substack{k\leqslant\sqrt{\frac{X}{g}}\\ (k,\CD m \Omega)=1}}\lambda_1(k)+O_\varepsilon\left((m\CD)^\varepsilon|\chi|^{\frac{11}{8}+\varepsilon}a_0^{\frac{7}{8}+\varepsilon}  X^{\frac{15}{16}+\varepsilon}+b_0^{1+\varepsilon}X^\varepsilon\right).$$
where $$\lambda_1(k):=\frac{\chi(k)\Theta(k;\chi)}{k}.$$
Moreover, on observing $$\gcd\left(t,\frac{q}{|\chi_*|}\right)\leqslant\min\left(\frac{q}{a_0k},\frac{q}{|\chi_*|}\right)\leqslant \frac{q}{|\chi_*|^{\frac{31}{32}}(a_0k)^\frac{1}{32}},$$
Lemma \ref{le:expsum} also gives, \begin{equation}\label{eq:thetabd}
	\Theta(k;\chi)\begin{cases}
		\leqslant \frac{1} {\phi(k)}\ll_{\varepsilon}\frac{1}{k^{1-\varepsilon}} & \text{ if } \chi \text{ is principal}; \\  \ll_{\varepsilon}\frac{|\chi|^\varepsilon }{|\chi_*|^{\frac{15}{32}}(a_0k)^{\frac{1}{32}-\varepsilon}} &\text{ if } \chi \text{ is not principal}.
	\end{cases}
\end{equation}  Therefore
$$\sum_{\substack{k\leqslant\sqrt{\frac{X}{g}}\\ (k,\CD m\Omega)=1}}\lambda_1(k)=\sum_{\substack{k\in\BN\\ (k,\CD m\Omega)=1}}\lambda_1(k)+O_\varepsilon(|\chi|^\varepsilon g^\frac{1}{64}X^{-\frac{1}{64}+\varepsilon}).$$
So extending the $k$-sum to infinity results in $O_\varepsilon((\CD|\chi|)^\varepsilon X^{\frac{63}{64}+\varepsilon})$. So, on inserting the error term in \eqref{eq:S1chara}, we henceforth conclude that \begin{multline}\label{eq:sum1}
			\mathfrak{S}_1(\chi;X)=X\left(\sum_{\substack{g\mid \CD\\ (g,a_0)=1}}\frac{\mu^2(g)}{g}\right)\left(\sum_{\substack{k\in\BN\\ (k,\CD m \Omega)=1}}\lambda_1(k)\right)\\ +O_\varepsilon\left((m\CD)^\varepsilon|\chi|^{\frac{11}{8}+\varepsilon}a_0^{\frac{7}{8}+\varepsilon}X^{\frac{63}{64}+\varepsilon}+b_0^{1+\varepsilon}(\log X)^2\right).
\end{multline}

As for $\mathfrak{S}_2(\chi;X)$, we just need to reverse the role of $j$ and $k$, and we deduce that $\mathfrak{S}_2(\chi;X)$ has the same asymptotic formula as \eqref{eq:sum1}.

Therefore we conclude the proof of Theorem \ref{thm:Fstarcneqsq} (2) on letting \begin{equation}\label{eq:Gamma}
	\Gamma(\chi;l_1,l_2;\bc):=2\sum_{\substack{g\mid \CD\\ (g,a_0)=1}}\frac{\mu^2(g)}{g}\sum_{\substack{k\in\BN\\(k,\CD m\Omega)=1}}\lambda_1(k).
\end{equation}
Then by \eqref{eq:thetabd}, $$\Gamma(\chi;l_1,l_2;\bc)\ll_{\varepsilon}\frac{|\chi|^\varepsilon}{|\chi_*|^{\frac{15}{32}}}\Upsilon_{1}(2b_0)\ll_{\varepsilon} \frac{(l_2|\bc||\chi|)^\varepsilon }{|\chi_*|^{\frac{15}{32}}}\Upsilon_{1}(m)$$ as desired.
\qed

\section{Sums of Salié sums}

We specialise from now on to the case where $d=3$ and $-m\Delta_F=\square$.

Recall \eqref{eq:Sqc}. For $\bc\in\BZ^3$ and $X\gg 1$, let us define \begin{equation}\label{eq:CFc}
	\CF_{\bc}(X):=\sum_{\substack{q\leqslant X}}\frac{\e_{qL^2}(\bc\cdot\blambda_m)\widehat{S}_{q}(\bc)}{q^2}.
\end{equation}

Based on results established in \S\ref{se:saliesum}, our goal is prove the following evaluation of ``sums of Salié sums''.

\begin{theorem}\label{thm:sumsaliesum}
	%Assume that $\omega(m)=O(1)$. 
	Let $\bc\in\BZ^3\setminus\boldsymbol{0}$. 
	\begin{enumerate}
		\item Assume that $-F^*(\bc)$ is not a perfect square. Then
		$$\CF_{\bc}(X)\ll_\varepsilon|\bc|^{3+\varepsilon}X \min\left(\Upsilon_{\frac{1}{4}}(m)(\log X)^{-\frac{1}{4}(1-\frac{\sqrt{2}}{2})+\varepsilon},(\log X)^{\frac{1}{4}(3+\frac{\sqrt{2}}{2})+\varepsilon}\right).$$
		\item	Assume that $-F^*(\bc)$ is a perfect square (including $0$). Then 
		$$\CF_{\bc}(X)\begin{cases}
			&=\eta(\bc) X+O_{\varepsilon,\beta}\left((mX)^\varepsilon |\bc|^{1+\varepsilon} (X^{\frac{43}{8}\beta }(X^{\frac{63}{64}}+m^\frac{1}{2})+X^{1-\frac{1}{32}\beta})\right);\\
			&\ll_{\varepsilon} |\bc|^{1+\varepsilon} X\left(\Upsilon_{\frac{31}{64}}(m)(\log X)\exp\left(-\frac{1}{64}(\log X)^{\frac{33}{64}}\right)+(\log X)^{\frac{1983}{1984}}\right).
		\end{cases}$$ 
		%$$,$$
		for any fixed $0<\kappa_1<1$ and $\beta>0$, where $\eta(\bc)\in\BR$ is defined in \eqref{eq:etac2} \eqref{eq:etac1} and satisfies $\eta(\bc)\ll \Upsilon_{\kappa_0}(m)$ for certain numerical constant $0<\kappa_0<1$.
	\end{enumerate}
\end{theorem}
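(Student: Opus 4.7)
The plan is to reduce $\CF_\bc(X)$ to the character-twisted quadratic congruence sums $\CU_{l_1,l_2,\bc}(\chi';\cdot)$ treated in Section~\ref{se:saliesum}, after truncating the bad-moduli contribution via Proposition~\ref{prop:Kloosterman} and removing the weight $1/q^2$ by partial summation.

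First, we factor $q=n\ell$ with $\ell=q_{m\Omega}$ and $\gcd(n,m\Omega)=1$, and split $\ell$ further into $q_\Omega$, a squarefree $m(\Omega)$-part and a squarefull $m(\Omega)$-part as in \eqref{eq:multiplicativity}. Substituting Proposition~\ref{prop:S1value} into $\widehat{S}_q^{(1)}(\bc)$ and invoking the Sali\'e-sum evaluation \eqref{eq:rSalie} on the $n$-part (while treating the squarefree and squarefull $m(\Omega)$-parts by the Gauss-sum and Weil-bound identities already used in the proof of Proposition~\ref{prop:Kloosterman}), and simultaneously using the character expansion \eqref{eq:S2CS}--\eqref{eq:hatCS} for $\widehat{S}_q^{(2)}(\bc)$, we rewrite the summand in \eqref{eq:CFc} so that its $n$-dependence reduces to
\[
\frac{\iota_n\,\chi'(n)}{n^2}\sum_{\substack{u\bmod n\\ G_{l_1,l_2,\bc}(u)\equiv 0\bmod n}}\e_n(u),
\]
for a Dirichlet character $\chi'$ mod $\ell L^2$ built from a Jacobi symbol, the primitive character of the squarefree part of $l_1$, and the character $\chi$ arising in $\widehat{\CA}_{q_2}(\chi;\bc)$. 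The phase $\e_{qL^2}(\bc\cdot\blambda_m)$ cancels against the phases in $\widehat{S}_q^{(1)}$ and $\widehat{S}_q^{(2)}$ by CRT. Separating $\iota_n$ on residue classes mod $4$, the inner $n$-sum becomes a $1/n^2$-weighted version of $\CU_{l_1,l_2,\bc}(\chi';\cdot)$.

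Next, for a cut-off parameter $\psi(X)$, we apply Proposition~\ref{prop:Kloosterman} to truncate the contribution of $\ell=q_{m\Omega}>\psi(X)$ (or, in case (1), of $q_\Omega>\psi(X)$ or $q_{m(\Omega)}^\square>\psi(X)$); on the truncated range we apply Theorem~\ref{thm:Fstarcneqsq}, then handle the $1/n^2$-weight by partial summation (using Proposition~\ref{prop:CAq2} for uniformity in $\chi$). In case (1), the log-saving $(\log Y)^{-(1-\sqrt{2}/2)+\varepsilon}$ of Theorem~\ref{thm:Fstarcneqsq}(1) propagates through partial summation, and combined with the two alternative estimates of Proposition~\ref{prop:Kloosterman} (the $\Upsilon_{1/4}(m)$-bound and the $\min(\log X,\Upsilon_1(m))$-bound) yields both displayed bounds after optimising $\psi(X)$ separately for each. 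In case (2), the main term $\Gamma(\chi';l_1,l_2;\bc) Y$ of Theorem~\ref{thm:Fstarcneqsq}(2), after partial summation against $1/n^2$, produces the leading contribution $\eta(\bc) X$; here $\eta(\bc)$ is assembled from the explicit prefactor coming from $q_\Omega$ and the two $m(\Omega)$-parts of $\ell$, the factor $\widehat{\CA}_{q_2}(\chi;\bc)$, and the infinite sum of $\Gamma(\chi';l_1,l_2;\bc)$ over $\ell$ and $\chi$. Its absolute convergence and the bound $\eta(\bc)\ll \Upsilon_{\kappa_0}(m)$ follow by combining the $|\chi_*|^{-15/32}$ saving of Theorem~\ref{thm:Fstarcneqsq}(2) with the $|\chi_*|^{-1/4}$ saving of Proposition~\ref{prop:CAq2}.

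The main obstacle will be the quantitative optimisation in case (2): one must balance simultaneously the bad-moduli cut-off $\psi(X)$, the modulus-dependence $|\chi|^{11/8}l_1^{7/8}$ in the error of Theorem~\ref{thm:Fstarcneqsq}(2), the $1/n^2$ weight, and the $|\chi_*|^{-1/4}$ saving from Proposition~\ref{prop:CAq2}, in order to reach the exponent $1983/1984$. The subexponential cut-off $\exp(-\tfrac{1}{64}(\log X)^{33/64})$ in the statement indicates that $\psi(X)$ must be chosen of order $\exp((\log X)^{33/64})$, reflecting the equality between the $\psi(X)^{-1/2+\varepsilon}$-saving of Proposition~\ref{prop:Kloosterman} and the $X^{1/64}$-loss of Theorem~\ref{thm:Fstarcneqsq}(2) after the $\chi$- and $\ell$-averages.
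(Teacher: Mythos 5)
Your overall strategy matches the paper's: decompose $\widehat{S}_q(\bc)$ by CRT into a good-modulus part evaluated as a Sali\'e sum and a bad-modulus part expanded over characters via $\widehat{\CA}_{q_2}(\chi;\bc)$, truncate the bad moduli with Proposition \ref{prop:Kloosterman}, and feed the good-modulus sum into Theorem \ref{thm:Fstarcneqsq}. But there is a concrete error in your reduction that breaks the main term. You claim the $n$-dependence of the summand is $\frac{\iota_n\chi'(n)}{n^2}\sum_u \e_n(u)$ and that you will "handle the $1/n^2$-weight by partial summation", producing $\eta(\bc)X$ from the asymptotic $\CU(Y)\sim\Gamma Y$. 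That cannot work: partial summation of a sequence with partial sums $\sim\Gamma Y$ against the weight $1/n^2$ yields a bounded quantity (at best $\Gamma\log$), never a term of size $X$. In fact there is no $1/n^2$ weight on the good modulus at all: the explicit evaluation of $\widehat{S}_{q}^{(1)}(\bc)$ carries a factor $q_1^{(d+1)/2}=q_1^2$ (cf.\ \eqref{eq:decompS1}), which exactly cancels the $1/q^2$ in \eqref{eq:CFc}, leaving the \emph{unweighted} sum $\CU_{1,q_2,\bc}(\chi;X/q_2)$ and a residual weight $1/q_2^2$ only on the bad modulus (this is what makes $\sum_{q_2\mid(m\Omega)^\infty}\widehat{\CA}_{q_2}(\chi;\bc)/q_2^2\ll\sum q_2^{1/2}$ manageable). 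As written, your mechanism produces a main term of the wrong order of magnitude.

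Two further gaps. First, case (2) explicitly includes $F^*(\bc)=0$, where Theorem \ref{thm:Fstarcneqsq}(2) does not apply (the polynomial $G_{l_1,l_2,\bc}$ degenerates and the Dartyge--Martin factorisation has $b_0=0$); the paper handles this separately by evaluating $\sum_{u^2\equiv 0\bmod q}\e_q(u)=\mu(q)^2$, so that $\CU$ becomes a character sum over square-free integers, estimated via Burgess for non-principal $\chi$ and the square-free density for $\chi_0$. Your proposal is silent on this sub-case. Second, the two alternatives inside the $\min$ in (1) (and the two displayed bounds in (2)) do not come from one decomposition with two truncations of $\psi(X)$: the paper runs two genuinely different CRT decompositions, \eqref{eq:decomp1} with $q_{m\Omega}$ kept whole versus \eqref{eq:decomp2} with the square-free $m(\Omega)$-part pulled out as a Jacobi symbol and fed into the \emph{first} argument of $\CU_{q_2,l,\bc}$ (hence into the polynomial $G$); only the second route removes the $\Upsilon_{\frac14}(m)$ dependence, at the cost of extra powers of $\log X$. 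Your single hybrid decomposition would give at most one of each pair of bounds. Your closing heuristics on choosing $\psi(X)=\exp((\log X)^{33/64})$ and on the origin of the exponent $\frac{1983}{1984}$ are, however, consistent with the paper's optimisation.
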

\begin{remark*}
	The exponents of these bounds, though being sufficient for the goal of the current article, are by no means optimal.
\end{remark*}
\noindent\emph{Proof of Theorem \ref{thm:sumsaliesum}.} We shall exhibit two alternative bounds relying on different decompositions for the $q$-sum $\CF_{\bc}(X)$. We shall begin with the proof for the easier bound, before entering into the argument for the harder one.
	\subsection{} 
	By Proposition \ref{prop:S1value} and by \eqref{eq:rSalie}, on recalling \eqref{eq:S2CS}--\eqref{eq:hatCS}, \eqref{eq:U}, and on using the formula
	$$\overline{q_1}q_1+\overline{q_2L^2}q_2L^2\equiv 1\bmod q_1q_2L^2$$ whenever $\gcd(q_1,q_2L)=1$,
	we have the following decomposition
	\begin{align}
		\CF_{\bc}(X)&=\sum_{\substack{q_2\leqslant X\\q_2\mid(m\Omega)^\infty}}\frac{1}{q_2^2}\sum_{\chi\bmod q_2L^2}\widehat{\CA}_{q_2}(\chi;\bc)\sum_{\substack{q_1\leqslant \frac{X}{q_2}\\ \gcd(q_1,m\Omega)=1}}\chi(q_1)\sum_{\substack{u\bmod q_1\\ G_{1,q_2,\bc}(u)\equiv 0\bmod q_1}}\e_{q_1}(u)\nonumber\\ &=\sum_{\substack{q_2\leqslant X\\q_2\mid(m\Omega)^\infty}}\frac{1}{q_2^2}\sum_{\chi\bmod q_2L^2}\widehat{\CA}_{q_2}(\chi;\bc)\CU_{1,q_2,\bc}\left(\chi;\frac{X}{q_2}\right).\label{eq:decomp1}
		%\\ &=\sum_{\substack{q_2\leqslant X\\ q_2\mid \Omega^\infty}}\frac{\sum_{\chi\bmod q_2L^2}\widehat{\CA}_{q_2}(\chi;\bc)}{q_2^2}\sum_{\substack{q_1\leqslant\frac{X}{q_2}\\ q_1\mid (m(\Omega))^\infty}}\frac{\chi(q_1)\left(\frac{\Delta_F}{q_1}\right)\CT(q_1;q_2,\bc)}{\iota_{q_1}}\CU_{1,q_2,\bc}\left(\chi,\frac{X}{q_1q_2}\right).\label{eq:decomp2}
	\end{align}
	
	\subsubsection{Case where $-F^*(\bc)\neq\square$.} Let 	$\CF_{\bc}^{(1)}(\beta;X)$ denote the sum in \eqref{eq:CFc} restricted to the range $q_{m\Omega}\leqslant (\log X)^\beta$.
According to Proposition \ref{prop:Kloosterman} applied to $\psi(X):=(\log X)^\beta$, we have
			$$\CF_{\bc}(X)-\CF_{\bc}^{(1)}(\beta;X)\ll_{\varepsilon} \Upsilon_{\frac{1}{4}}(m) |\bc|^{2+\varepsilon}X(\log X)^{-\frac{1}{4}\beta+\varepsilon}.$$
			
				We next focus on $\CF_{\bc}^{(1)}(\beta;X)$. 
			Applying Theorem \ref{thm:Fstarcneqsq} (1) we obtain $$\CU_{1,q_2,\bc}\left(\chi,\frac{X}{q_2}\right)\ll_\varepsilon |\bc|^{2+\varepsilon} \frac{X}{q_2(\log X)^{1-\frac{\sqrt{2}}{2}-\varepsilon}}.$$ Hence using the decomposition \eqref{eq:decomp1} and by Proposition \ref{prop:S2},
			\begin{align*}
				\CF_{\bc}^{(1)}(\beta;X)&\ll_\varepsilon  |\bc|^{2+\varepsilon} \frac{X}{(\log X)^{1-\frac{\sqrt{2}}{2}-\varepsilon}}\sum_{\substack{q_2\leqslant (\log X)^\beta\\ q_2\mid (m\Omega)^\infty}}\frac{\sup_{\chi\bmod q_2L^2}\left|\widehat{\CA}_{q_2}(\chi;\bc)\right|}{q_2^2}\\  &\ll_\varepsilon|\bc|^{2+\varepsilon} \frac{X}{(\log X)^{1-\frac{\sqrt{2}}{2}-\varepsilon}}\sum_{\substack{q_2\leqslant (\log X)^\beta\\ q_2\mid(m\Omega)^\infty}}q_2^{\frac{1}{2}}\ll \Upsilon_{\frac{1}{4}}(m) |\bc|^{2+\varepsilon} \frac{X}{(\log X)^{1-\frac{\sqrt{2}}{2}-\frac{3}{4}\beta-\varepsilon}}.
			\end{align*}
			It suffices to choose $\beta=1-\frac{\sqrt{2}}{2}$ to conclude this case.

	\subsubsection{Case where $-F^*(\bc)$ is a non-zero square.}
	Let us denote by $\CF_{\bc}^{(2)}(\beta;X)$ the sum in \eqref{eq:CFc} restricted to $q_{m\Omega}\leqslant X^\beta$.
	Then Proposition \ref{prop:Kloosterman} shows that 
	\begin{equation}\label{eq:CFbd1}
		\CF_{\bc}(X)-\CF_{\bc}^{(2)}(\beta;X)\ll_{\varepsilon}m^\varepsilon |\bc|^{1+\varepsilon} X^{1-\frac{1}{4}\beta+\varepsilon}.
	\end{equation}
	By Theorem \ref{thm:Fstarcneqsq} (2), we obtain that
	$$\CU_{1,q_2,\bc}\left(\chi;\frac{X}{q_2}\right)=\frac{\Gamma(\chi;1,q_2;\bc)}{q_2}X+O_\varepsilon\left((mX)^\varepsilon |\bc|^{1+\varepsilon}q_2^{\frac{11}{8}}(X^{\frac{63}{64}}+m^\frac{1}{2})\right).$$ The big $O$-term contributes to $\CF_{\bc}^{(2)}(\beta;X)$, by \eqref{eq:CAbd}
	\begin{align*}
		&\ll_{\varepsilon} (m X)^\varepsilon|\bc|^{1+\varepsilon}(X^{\frac{63}{64}}+m^\frac{1}{2})\sum_{\substack{q_2\leqslant X^\beta\\ q_2\mid(m\Omega)^\infty}}q_2^\frac{3}{8}\sup_{\chi\bmod q_2L^2}\left|\widehat{\CA}_{q_2}(\chi;\bc)\right| \\ &\ll_{\varepsilon} m^\varepsilon |\bc|^{1+\varepsilon} X^{\frac{43}{8}\beta+\varepsilon }(X^{\frac{63}{64}}+m^\frac{1}{2}).
	\end{align*}
	By Theorem \ref{thm:Fstarcneqsq} (2) %(and the assumption $\omega(m)=O(1)$),
	we have \begin{equation}\label{eq:bdgamma}
		\Gamma(\chi;1,q_2;\bc)\ll_{\varepsilon} \frac{(m|\bc|q_2)^\varepsilon }{|\chi_*|^\frac{15}{32}}.
	\end{equation}
Combing \eqref{eq:bdgamma} with Proposition \ref{prop:CAq2}, we obtain
	\begin{equation}\label{eq:chisum}
			\begin{split}
			&\sum_{\substack{q_2>X^\beta\\ q_2\mid (m\Omega)^\infty}}\sum_{\chi\bmod q_2L^2}\frac{\widehat{\CA}_{q_2}(\chi;\bc)\Gamma(\chi;1,q_2;\bc)}{q_2^3}\\ \leqslant&\sum_{\substack{q_2>X^\beta\\ q_2\mid (m\Omega)^\infty}}\sum_{e\mid q_2L^2}\sum_{\substack{\chi\bmod q_2L^2\\ |\chi_*|=e}}\frac{\left|\widehat{\CA}_{q_2}(\chi;\bc)\Gamma(\chi;1,q_2;\bc)\right|}{q_2^3} \\ \ll_{\varepsilon}&(m|\bc|)^\varepsilon \sum_{\substack{q_2>X^\beta\\ q_2\mid (m\Omega)^\infty}}q_2^{-\frac{5}{16}+\varepsilon}\sum_{e\mid q_2L^2}e^\frac{9}{32}\\ \ll_{\varepsilon} & (m|\bc|)^\varepsilon \sum_{\substack{q_2>X^\beta\\ q_2\mid (m\Omega)^\infty}} q_2^{-\frac{1}{32}+\varepsilon}\ll_{\varepsilon}  (m|\bc|)^\varepsilon X^{-\frac{1}{32}\beta+\varepsilon}.
		\end{split}
	\end{equation}
	So the leading term in $\CU_{1,q_2,\bc}\left(\chi;\frac{X}{q_2}\right)$ contributes
	$$X\sum_{\substack{q_2\leqslant X^\beta\\ q_2\mid(m\Omega)^\infty}}\sum_{\chi\bmod q_2L^2}\frac{\widehat{\CA}_{q_2}(\chi;\bc)\Gamma(\chi;1,q_2;\bc)}{q_2^3}=\eta(\bc)X+O_\varepsilon\left((m|\bc|)^\varepsilon X^{1-\frac{1}{32}\beta+\varepsilon}\right),$$ where for $\bc\in\BZ^3\setminus\boldsymbol{0}$ with $-F^*(\bc)=\square\neq 0$, we let 
	\begin{equation}\label{eq:etac2}
		\eta(\bc):=\sum_{\substack{u\mid (m\Omega)^{\infty}}}\sum_{\chi\bmod uL^2}\frac{\widehat{\CA}_{u}(\chi;\bc)\Gamma(\chi;1,u;\bc)}{u^3}.
	\end{equation}
It satisfies $$\eta(\bc)\ll_{\varepsilon} \sum_{u\mid (m\Omega)^\infty}u^{-\frac{1}{32}+\varepsilon}\ll \Upsilon_{\frac{1}{32}-\varepsilon}(m)$$ by the argument above. Hence for such $\bc$,  $$\CF_{\bc}^{(2)}(\beta;X)=\eta(\bc)X+O_\varepsilon\left((mX)^\varepsilon |\bc|^{1+\varepsilon} (X^{\frac{43}{8}\beta }(X^{\frac{63}{64}}+m^\frac{1}{2})+X^{1-\frac{1}{32}\beta})\right).$$

	\subsubsection{Case where $F^*(\bc)=0$.}
		We pause to evaluate \eqref{eq:CT} assuming $\gcd(r,m\Omega)=1$ and $F^*(\bc)=0$. In view of \eqref{eq:rSalie}, it suffices to evaluate the function \begin{equation}\label{eq:CV}
		\CV(q):=\sum_{\substack{u^2\equiv 0\bmod q}} \e_q(u).
	\end{equation} We first show that it is multiplicative. Indeed, if $q=q_1q_2$ with $\gcd(q_1,q_2)=1$, then on factorising $u=q_2u_1+q_1u_2$ with $u_1\bmod q_1,u_2\bmod q_2$, we have, by the Chinese remainder theorem,
	\begin{align*}
		\CV(q)&=\sum_{\substack{u_1 \bmod q_1,u_2\bmod q_2\\ (q_2u_1+q_1u_2)^2\equiv 0\bmod q_1q_2}}\e_{q_1}(u_1)\e_{q_2}(u_2)\\ &=\CV(q_1)\CV(q_2).
	\end{align*}
	For $\alpha\in\BN$, a straightforward computation gives
	$$\CV(p^\alpha)=\sum_{\substack{u'\bmod p^{\alpha-\lceil\frac{\alpha}{2}\rceil}}}\e_{p^{\alpha-\lceil\frac{\alpha}{2}\rceil}}(u')=\begin{cases}
		1 &\text{ if } \alpha=1;\\ 0 &\text{ otherwise}.
	\end{cases}$$ We conclude \begin{equation}\label{eq:CVvalue}
		\CV(q)=\mu(q)^2.
	\end{equation} 
	Therefore, in view of the computation \eqref{eq:CV} \eqref{eq:CVvalue}, we have
	$$\CU_{1,q_2,\bc}\left(\chi;Y\right)=\sum_{\substack{q_1\leqslant Y\\ \gcd(q_1,m\Omega)=1}}\mu^2(q_1)\chi(q_1).$$
	
	 On using the elementary identity $\mu^2(n)=\sum_{e^2\mid n}\mu(e)$ and on using the Burgess's $\frac{3}{16}$-bound, we obtain, for each fixed $q_2\mid(m\Omega)^\infty$ and non-principal $\chi\bmod q_2L^2$,
	\begin{equation}\label{eq:fstarc=01}
			\begin{split}
			\CU_{1,q_2,\bc}\left(\chi;Y\right)&=\sum_{d\mid m\Omega}\mu(d)\sum_{q_1\leqslant Y,d\mid q}\mu^2(q_1)\chi(q_1)\\ &=\sum_{d\mid m\Omega}\mu(d)\chi(d)\sum_{e\leqslant (\frac{Y}{d})^\frac{1}{2}}\mu(e)\chi(e)^2\sum_{n\leqslant \frac{Y}{d e^2}}\chi(n)\ll_{\varepsilon} m^\varepsilon Y^{\frac{1}{2}+\varepsilon} q_2^{\frac{3}{16}+\varepsilon}. 
		\end{split}
	\end{equation}
	Now we consider the case $\chi=\chi_0$. By the well-known formulas for the density of square-free integers,
\begin{equation}\label{eq:fstarc=02}
		\CU_{1,q_2,\bc}\left(\chi_0;Y\right)=\sum_{\substack{q_1\leqslant Y\\ (q_1,m\Omega)=1}}\mu(q_1)^2=\frac{6}{\pi^2}\left(\sum_{d\mid m\Omega}\frac{\mu(d)}{d}\right)Y+O_\varepsilon\left(m^\varepsilon Y^\frac{1}{2}\right).
\end{equation}
Let $\CF_{\bc}^{(2)}(\beta;X)$ be as in the previous case. Then the contribution of the non-principal $\chi\bmod q_2L^2$ together with the error term of \eqref{eq:fstarc=02} can be bounded  by, using \eqref{eq:CAbd},
\begin{align*}
	&m^\varepsilon \sum_{\substack{q_2\leqslant X^\beta\\ q_2\mid(m\Omega)^\infty}}\frac{\phi(q_2L^2)}{q_2^{2}}\sup_{\chi\bmod q_2L^2}\left|\widehat{\CA}_{q_2}(\chi;\bc)\right| \left(\frac{X}{q_2}\right)^{\frac{1}{2}+\varepsilon}q_2^{\frac{3}{16}+\varepsilon} \\ \ll & X^{\frac{1}{2}+\varepsilon}\sum_{\substack{q_2\leqslant X^\beta\\ q_2\mid(m\Omega)^\infty}}q_2^{\frac{19}{16}}\ll_{\varepsilon}  m^\varepsilon X^{\frac{1}{2}+\frac{19}{16}\beta+\varepsilon}.
\end{align*}
 So in the case $F^*(\bc)=0$,
	\begin{align*}
		\CF_{\bc}^{(2)}(\beta;X)&=\frac{6}{\pi^2}\left(\sum_{d\mid m\Omega}\frac{\mu(d)}{d}\right)X\sum_{\substack{q_2\leqslant X^\beta\\ q_2\mid(m\Omega)^\infty}}\frac{\widehat{\CA}_{q_2}(\chi_0;\bc)}{q_2^3}+O_\varepsilon\left(m^\varepsilon X^{\frac{1}{2}+\frac{19}{16}\beta+\varepsilon}\right)\\ &=\eta(\bc)X+O_\varepsilon\left(m^\varepsilon(X^{1-\frac{\beta}{2}+\varepsilon}+X^{\frac{1}{2}+\frac{19}{16}\beta+\varepsilon})\right),
	\end{align*} where we define for $\bc$ with $F^*(\bc)=0$,
	\begin{equation}\label{eq:etac1}
		\eta(\bc):=\frac{6}{\pi^2}\left(\sum_{d\mid m\Omega}\frac{\mu(d)}{d}\right)\left(\sum_{\substack{u\mid (m\Omega)^{\infty}}}\frac{\widehat{\CA}_{u}(\chi_0;\bc)}{u^3}\right),
	\end{equation}
	and $$\sum_{\substack{q_2>X^\beta\\ q_2\mid(m\Omega)^\infty}}\frac{\widehat{\CA}_{q_2}(\chi_0;\bc)}{q_2^3}\ll \sum_{\substack{q_2> X^\beta\\ q_2\mid(m\Omega)^\infty}}q_2^{-\frac{1}{2}}\ll_{\varepsilon} m^\varepsilon X^{-\frac{1}{2}\beta+\varepsilon}.$$
	Similarly $\eta(\bc)$ satisfies $$\eta(\bc)\ll \sum_{u\mid (m\Omega)^\infty}u^{-\frac{1}{2}}\ll \Upsilon_{\frac{1}{2}}(m).$$
	
	\subsection{} 
	We now prove a more sophisticated decomposition for $\CF_{\bc}(X)$ \eqref{eq:CFc}, which is slightly different from \eqref{eq:multiplicativity}.
	Let $q\in\BN$ be such that $\gcd(q,\Omega)=1$ and $q_{m(\Omega)}$ square-free. We let $q_\flat:=q/q_{m(\Omega)}$ so that $\gcd(q_\flat,m\Omega)=1$. For any $l\mid (m\Omega)^\infty$ with $\gcd(q,l)=1$, we recall \eqref{eq:CT}, and by \eqref{eq:multiplicativity} we have $$	\CT_q^1(l,\bc)=\CT_{q_{m(\Omega)}}^{q_\flat}(l,\bc) \CT_{q_\flat}^{q_{m(\Omega)}}(l,\bc).$$ The first factor is a Gauss sum:
	\begin{align*}
		\CT_{q_{m(\Omega)}}^{q_\flat}(l,\bc)&=\sum_{\substack{a\bmod q_{m(\Omega)}\\ (a,q_{m(\Omega)})=1}}\left(\frac{a}{q_{m(\Omega)}}\right)\e_{q_{m(\Omega)}} \left(-\overline{4\Delta_F aq_\flat}(lL^2)^2 F^*(\bc)\right)\\ &=\left(\frac{-\Delta_F q_\flat F^*(\bc)} {q_{m(\Omega)}}\right)\iota_{q_{m(\Omega)}},
	\end{align*} by e.g. \cite[(3.12)]{Iwaniec-Kolwalski}. The second one is a Salié sum, and by \eqref{eq:rSalie},
	\begin{align*}
		\CT_{q_\flat}^{q_{m(\Omega)}}(l,\bc)&=\sum_{\substack{b\bmod q_\flat\\ (b,q_\flat)=1}}\left(\frac{b}{q_\flat}\right)\e_{q_\flat} \left(-am\overline{q_{m(\Omega)}}-\overline{4\Delta_F aq_{m(\Omega)}}(lL^2)^2 F^*(\bc)\right)\\ &=\left(\frac{-mq_{m(\Omega)}}{q_\flat}\right)\iota_{q_\flat}\sum_{\substack{u\bmod q_\flat\\ G_{q_{m(\Omega)},l,\bc}\equiv 0\bmod q_\flat}}\e_{q_\flat}(u).
	\end{align*} 
	On the other hand (see e.g. \cite[(3.38)]{Iwaniec-Kolwalski}), 
	$$\iota_q=\iota_{q_{m(\Omega)}}\iota_{q_\flat}\left(\frac{q_{m(\Omega)}}{q_\flat}\right)\left(\frac{q_\flat}{q_{m(\Omega)}}\right).$$
	%Let $\chi_4$ be the non-principal character modulo $4$. By the quadratic reciprocity law, for any fixed odd square-free $n$, we have $$\varpi_n(r):=\left(\frac{n}{r}\right)\left(\frac{r}{n}\right)=\chi_4(r)^{\#\{p\mid n:p\equiv 3\bmod 4\}},$$ whence $\varpi_n$ itself is of modulus $4$. 
	Hence by Proposition \ref{prop:S1value}, under our assumption that $-m\Delta_F=\square$, \begin{equation}\label{eq:decompS1}
		\e_{q}\left(\overline{lL^2}\blambda_m\cdot \bc\right)\widehat{S}_{ql}^{(1)}(\bc)=q^2\left(\frac{-F^*(\bc)}{q_{m(\Omega)}}\right)\sum_{\substack{u\bmod q_\flat\\ G_{q_{m(\Omega)},l,\bc}\equiv 0\bmod q_\flat}}\e_{q_\flat}(u).
	\end{equation}
	Now we are ready to derive our second decomposition
	\begin{equation}\label{eq:decomp2}
		\small\begin{split}
			\CF_{\bc}(X)
			%&=\sum_{\substack{l\leqslant X\\ l\mid (m\Omega)^\infty\\ l_{m(\Omega)}\square-\text{full}}}\sum_{\chi\bmod lL^2}\frac{\widehat{\CA}_{l}(\chi;\bc)}{l^2}\sum_{\substack{q_2\leqslant \frac{X}{l}\\ q_2\mid m(\Omega)^\infty\\ q_2\square-\text{free}}}\left(\frac{-F^*(\bc)}{q_2}\right)\sum_{\substack{q_1\leqslant \frac{X}{q_2l}\\ \gcd(q_1,m\Omega)=1}}\chi(q_1)\varpi_{q_2}(q_1)\sum_{\substack{u\bmod q_1\\ G_{q_{2},l,\bc}\equiv 0\bmod q_1}}\e_{q_1}(u)\\
			 &=\sum_{\substack{l\leqslant X\\ l\mid (m\Omega)^\infty\\ l_{m(\Omega)}\square-\text{full}}}\sum_{\chi\bmod lL^2}\frac{\widehat{\CA}_{l}(\chi;\bc)}{l^2}\sum_{\substack{q_2\leqslant \frac{X}{l},(q_2,l)=1\\ q_2\mid m(\Omega)^\infty\\ q_2\square-\text{free}}}\left(\frac{-F^*(\bc)}{q_2}\right)\CU_{q_2,l,\bc}\left(\chi;\frac{X}{q_2l}\right).
		\end{split}
	\end{equation}
	\subsubsection{Case where $-F^*(\bc)\neq\square$.} 
	Let $\widetilde{\CF}_{\bc}^{(1)}(\beta;X)$ denote the $q$-sum such that both $q_{\Omega}$ and $q_{m(\Omega)}^\square$ are $\leqslant (\log X)^\beta$. Then Proposition \ref{prop:Kloosterman} gives
	$$\CF_{\bc}(X)-\widetilde{\CF}_{\bc}^{(1)}(\beta;X)\ll_{\varepsilon}|\bc|^{3+\varepsilon}X(\log X)^{1-\frac{\beta}{2}+\varepsilon}.$$
		Applying Theorem \ref{thm:Fstarcneqsq} (1) we obtain
		$$\CU_{q_2,l,\bc}\left(\chi;\frac{X}{q_2l}\right) \ll_\varepsilon |\bc|^{2+\varepsilon} \frac{X}{q_2l(\log X)^{1-\frac{\sqrt{2}}{2}-\varepsilon}}.$$
		Hence \begin{align*}
			\widetilde{\CF}_{\bc}^{(1)}(\beta;X)&\ll_\varepsilon |\bc|^{2+\varepsilon}\frac{X}{(\log X)^{1-\frac{\sqrt{2}}{2}-\varepsilon}} \sum_{\substack{l\mid (m\Omega)^\infty,l_{m(\Omega)}\square-\text{full}\\ \max(l_{m(\Omega)},l_\Omega)\leqslant(\log X)^\beta}}l^\frac{1}{2}\sum_{q_2\leqslant X}\frac{1}{q_2}\\ &\ll  |\bc|^{2+\varepsilon}X(\log X)^{\frac{\sqrt{2}}{2}+\frac{3}{2}\beta+\varepsilon}.
		\end{align*}
		We choose $\beta=\frac{1}{2}(1-\frac{\sqrt{2}}{2})$ to conclude.
		
		\subsubsection{Case where $-F^*(\bc)$ is a non-zero square.}
	For $\frac{1}{2}<\kappa_1<1$ let $$\psi(X):=\exp\left((\log X)^{\kappa_1}\right)$$  and let $\widetilde{\CF}_{\bc}^{(2)}(\kappa_1;X)$ denote the sum in \eqref{eq:CFc} restricted to the range $q_{m\Omega}\leqslant \psi(X)$. Then Proposition \ref{prop:Kloosterman} shows that
\begin{equation}\label{eq:CFbd2}
		\CF_{\bc}(X)-\widetilde{\CF}_{\bc}^{(2)}(\kappa_1;X)\ll_{\varepsilon}\Upsilon_{1-\kappa_1}(m)  |\bc|^{1+\varepsilon} X(\log X)\exp\left(\left(\frac{1}{2}-\kappa_1\right)(\log X)^{\kappa_1}\right).
\end{equation}
For $\beta>0$ let $\widetilde{\CF}_{\bc}^{(3)}(\kappa_1,\beta;X)$ be the $q$-sum satisfying both conditions $q_{m\Omega}\leqslant \psi(X)$ and $q_{m(\Omega)}^\square\leqslant(\log X)^\beta$. 
An easy modification of the proof \eqref{eq:Klsum2} of Proposition \ref{prop:Kloosterman} shows
\begin{align*}
	&\widetilde{\CF}_{\bc}^{(2)}(\kappa_1;X)-\widetilde{\CF}_{\bc}^{(3)}(\kappa_1,\beta;X)\\ \ll_{\varepsilon}&\sum_{\substack{q_2\leqslant \psi(X)\\ q_2\mid \Omega^\infty}}q_2^{\frac{1}{2}}\sum_{\substack{q_1' q_1 ''\leqslant \psi(X),q_1''>(\log X)^\beta\\ q_1'q_1 ''\mid (m(\Omega))^\infty\\ q_1'\square-\text{free},q_1''\square-\text{full}}}|\bc||q_1''|^{\varepsilon}\sum_{\substack{q_1\leqslant\frac{X}{q_1'q_1'' q_2}\\ (q_1,m\Omega)=1}}\tau(q_1)\\ \ll & |\bc|^{2+\varepsilon} X\log X\sum_{\substack{q_2\leqslant \psi(X)\\ q_2\mid \Omega^\infty}} q_2^{-\frac{1}{2}}\sum_{\substack{q_1'\leqslant \psi(X)\\q_1'\square-\text{free}}}q_1^{'-1} \sum_{\substack{(\log X)^\beta<q_1 ''\leqslant X \\ q_1''\square-\text{full} }}q_1^{''-1+\varepsilon}\\ \ll_\varepsilon & |\bc|^{2+\varepsilon} X(\log X)^{1+\kappa_1-\frac{1}{2}\beta+\varepsilon}.
	\end{align*}

	By Theorem \ref{thm:Fstarcneqsq} (2) (since both $q_2,l$ are $\ll_{\varepsilon} X^\varepsilon$)
	$$\CU_{q_2,l,\bc}\left(\chi;\frac{X}{q_2l}\right)=\frac{\Gamma(\chi;q_2,l;\bc)}{q_2l}X+O_\varepsilon\left((mX)^\varepsilon |\bc|^{1+\varepsilon}(X^{\frac{63}{64}}+m^\frac{1}{2})\right),$$ 
	with \begin{equation}\label{eq:bdgamma2}
		\Gamma(\chi;q_2,l;\bc)\ll_{\varepsilon} \frac{(l|\bc|)^\varepsilon }{|\chi_*|^{\frac{15}{32}}}\Upsilon_{1}(m).
	\end{equation}
	The big $O$-term contributes to $\widetilde{\CF}_{\bc}^{(3)}(\kappa_1,\beta;X)$ $$\ll_{\varepsilon} (m X)^\varepsilon|\bc|^{1+\varepsilon}(X^{\frac{63}{64}}+m^\frac{1}{2}).$$
	By Proposition \ref{prop:CAq2} and on using \eqref{eq:bdgamma2}, the contribution of the main term of $\CU_{1,q_2,\bc}\left(\chi;\frac{X}{q_1q_2}\right)$ to $\widetilde{\CF}_{\bc}^{(3)}(\kappa_1,\beta;X)$ is, analogously to \eqref{eq:chisum},
	\begin{align*}
		\leqslant & |\bc|^\varepsilon X\Upsilon_{1}(m)\sum_{\substack{l\mid (m\Omega)^\infty, l_{m(\Omega)}\square-\text{full}\\ l_{\Omega}\leqslant\psi(X),l_{m(\Omega)}\leqslant(\log X)^\beta}}\sum_{\chi\bmod lL^2}\frac{\left|\widehat{\CA}_{l}(\chi;\bc)\right|}{|\chi_*|^{\frac{15}{32}}l^{3-\varepsilon}}\sum_{\substack{q_2\square-\text{free}\\ q_2\leqslant\psi(X)}}\frac{1}{q_2}\\ \ll_{\varepsilon}&|\bc|^\varepsilon X(\log X)^{\kappa_1}\Upsilon_{1}(m)\sum_{\substack{l\mid (m\Omega)^\infty, l_{m(\Omega)}\square-\text{full}\\ l_{\Omega}\leqslant\psi(X),l_{m(\Omega)}\leqslant(\log X)^\beta}}l^{-\frac{5}{16}+\varepsilon}\sum_{e\mid lL^2}e^{\frac{9}{32}} \\ \ll_{\varepsilon}& |\bc|^\varepsilon X(\log X)^{\kappa_1}\Upsilon_{1}(m)\sum_{\substack{l_1\mid\Omega^\infty,l_1\leqslant\psi(X)\\ l_2\square-\text{full},l_2\leqslant (\log X)^\beta}} (l_1l_2)^{-\frac{1}{32}+\varepsilon}\\ \ll_\varepsilon & |\bc|^\varepsilon X(\log X)^{\kappa_1+\frac{15}{32}\beta}\Upsilon_{1}(m).
	\end{align*}
	We now take $$\kappa_1:=\frac{33}{64},\quad \beta:=\frac{32}{31}$$ to conclude.
	\subsubsection{Case where $F^*(\bc)=0$.}
	In  view of \eqref{eq:decompS1}, $\widehat{S}_{q}(\bc)\neq 0$ only if $q_{m(\Omega)}=q_{m(\Omega)}^\square$. 
	Let $\widetilde{\CF}_{\bc}^{(2)}(\kappa_1;X)$  be as in the previous case.
	According to the computations \eqref{eq:fstarc=01} \eqref{eq:fstarc=02}, the non-principal characters contribute to $\widetilde{\CF}_{\bc}^{(2)}(\kappa_1;X)$
	$$\sum_{\substack{\substack{l\mid (m\Omega)^\infty,l\leqslant \psi(X)\\ l_{m(\Omega)}\square-\text{full}}}}\sum_{\substack{\chi\bmod lL^2\\ \chi\neq\chi_0}}\frac{\left|\widehat{\CA}_{l}(\chi;\bc)\CU_{1,l,\bc}\left(\chi;\frac{X}{l}\right)\right|}{l^2}\ll m^\varepsilon X^{\frac{1}{2}+\varepsilon}.$$
	While the principal one contributes
	\begin{align*}
		\sum_{\substack{\substack{l\mid (m\Omega)^\infty,l\leqslant \psi(X)\\ l_{m(\Omega)}\square-\text{full}}}}\frac{\left|\widehat{\CA}_{l}(\chi_0;\bc)\CU_{1,l,\bc}\left(\chi_0;\frac{X}{l}\right)\right|}{l^2}\ll X\sum_{\substack{l\leqslant \psi(X)\\ l\square-\text{full}}}\frac{1}{l^\frac{1}{2}}\ll X(\log X)^{\kappa_1}.
	\end{align*}
We may take $\kappa_1:=\frac{3}{4}$ to conclude.
So the proof of Theorem \ref{thm:sumsaliesum} is completed. \qed
\section{Proof of the main theorems}
%\section{Proof of Theorem \ref{thm:mainsecondary}}

Assuming appropriate growth conditions on $m$, our goal in this section is to prove Theorems \ref{thm:mainterm} and \ref{thm:mainsecondary}. 

We need to treat separately the contribution from the Poisson variable $\bc=\boldsymbol{0}$ and $\bc\neq \boldsymbol{0}$.
\subsection{The contribution from $\bc\neq\boldsymbol{0}$}
 Without restriction on $\omega(m)$, we establish the following:
 \begin{theorem}\label{thm:csumneq0noomegem}
Assuming \eqref{eq:mB2theta}, we have 	$$\underset{\substack{\bc\in\BZ^{3}\setminus\boldsymbol{0},q\ll Q}}{\sum\sum}\frac{\widehat{S}_{q}(\bc)\widehat{I}_{q}(w;\bc)}{(qL)^3}=O\left(B^3(\log B)^{\frac{1983}{1984}}\right).$$
 \end{theorem}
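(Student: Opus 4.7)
The plan is to substitute the representation \eqref{eq:IJ} for $\widehat{I}_q(w;\bc)$, which reduces the target bound to
\[
T \;:=\; \sum_{\bc\in\BZ^{3}\setminus\boldsymbol{0}}\sum_{q\ll Q}\frac{\e_{qL^2}(\bc\cdot\blambda_m)\widehat{S}_q(\bc)\,\widehat{\CI}^{*}_{q/Q}(w;\bc/L)}{q^3} \;=\; O\!\left((\log B)^{1983/1984}\right),
\]
since the original sum equals $L^{-6}B^3\cdot T$. The simple estimate of Lemma \ref{le:Isimplehard}, used with $N$ arbitrarily large, first truncates the $\bc$-sum to $|\bc|\leqslant Q^{1+\varepsilon}$ at the cost of a negligible error. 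For each remaining $\bc$, the key step is Abel summation in $q$ against the slowly-varying function $t\mapsto \widehat{\CI}^{*}_{t/Q}(w;\bc/L)/t$, which brings in the partial sums $\CF_\bc(X)$ of \eqref{eq:CFc}; this reduction is essential, since Theorem \ref{thm:sumsaliesum} is precisely tailored to control $\CF_\bc(X)$ with genuine cancellation in the $q$-aspect.

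After this reduction I would split the $\bc$-sum according to whether $-F^*(\bc)$ is a perfect square. For $\bc$ with $-F^*(\bc)\neq\square$, Theorem \ref{thm:sumsaliesum}(1) supplies $\CF_\bc(X)\ll|\bc|^{3+\varepsilon}X(\log X)^{\frac{1}{4}(3+\frac{\sqrt{2}}{2})+\varepsilon}$ (or the alternative $m$-dependent form saving $(\log X)^{-\frac{1}{4}(1-\frac{\sqrt{2}}{2})}$). Inserted into the Abel-summed expression, combined with a dyadic decomposition and the harder/simple estimates of Lemma \ref{le:Isimplehard} (used respectively for $|\bc|\leqslant Q/q$ and $|\bc|>Q/q$), this contribution should be absorbed into the target error; the Kloosterman refinement of Proposition \ref{prop:Kloosterman} may also be invoked to restrict further to manageable moduli. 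For $\bc$ with $-F^*(\bc)=\square$, Theorem \ref{thm:sumsaliesum}(2) yields $\CF_\bc(X)\ll|\bc|^{1+\varepsilon}X(\log X)^{1983/1984}$; this is precisely where the exponent $1983/1984$ in the target originates.

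The hard part will be the case $-F^*(\bc)=\square$. A naive combination of the bound above, the harder estimate of Lemma \ref{le:Isimplehard}, and the count $\#\{\bc\in\BZ^3:|\bc|\leqslant R,\,-F^*(\bc)=\square\}\ll R^{2+\varepsilon}$ (the variety $-F^*(\bc)=y^2$ in $\BA^4$ being only $3$-dimensional) yields only a polynomial-in-$B$ bound and is far too weak. The way out is to use the refined first branch of Theorem \ref{thm:sumsaliesum}(2), which writes $\CF_\bc(t) = \eta(\bc)\,t + R(\bc,t)$ with $R(\bc,t)\ll|\bc|^{1+\varepsilon}t^{1-\delta}$ for some $\delta=\delta(\beta)>0$ upon choosing the parameter $\beta$ appropriately; the remainder $R$ is then summable termwise via the harder and simple estimates applied dyadically. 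The contribution of the main piece $\eta(\bc)\,t$, after Abel summation, reduces per $\bc$ to $\eta(\bc)\bigl[\widehat{\CI}^{*}_{1/Q}(w;\bc/L) + \int_{1/Q}^{1}\widehat{\CI}^{*}_{r}(w;\bc/L)\,dr/r\bigr]$. Using the uniform bound $\eta(\bc)\ll 1$ and a careful dyadic interpolation between the simple estimate ($|\bc|\gg r^{-1}$) and the harder estimate ($|\bc|\ll r^{-1}$) of Lemma \ref{le:Isimplehard}, combined with the quadratic $\bc$-count, one should obtain the bound $O((\log B)^{1983/1984})$. Executing this final summation cleanly, while preserving the correct logarithmic exponent, is where the bulk of the technical effort lies.
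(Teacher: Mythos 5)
The proposal diverges from the paper at the key step and contains a genuine gap. The central issue is your treatment of the $\bc$ with $-F^*(\bc)=\square$: after correctly observing that the harder estimate alone gives only a polynomial-in-$B$ bound, you pivot to the \emph{asymptotic} branch of Theorem \ref{thm:sumsaliesum}(2), extracting the main piece $\eta(\bc)\,t$ and asserting ``$\eta(\bc)\ll 1$''. But Theorem \ref{thm:sumsaliesum}(2) only gives $\eta(\bc)\ll\Upsilon_{\kappa_0}(m)$ with $\kappa_0\in\{1/32-\varepsilon,\tfrac12\}$, and without the hypothesis $\omega(m)=O(1)$ — which is \emph{not} assumed in Theorem \ref{thm:csumneq0noomegem} — Lemma \ref{le:upsilon} only bounds $\Upsilon_{\kappa_0}(m)$ by $\exp\bigl(C(\log B)^{1-\kappa_0}/\log\log B\bigr)$, which for $\kappa_0=1/32-\varepsilon$ vastly exceeds $(\log B)^{1983/1984}$. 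Hence the $\eta(\bc)\CJ(\bc)$ contribution is not controllable here; in effect you have transplanted the proof strategy of Theorem \ref{thm:csumneq0} (where $\omega(m)=O(1)$ tames $\eta$) to a setting where it fails.

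What actually closes the gap (and is the route the paper takes, in Proposition \ref{prop:qsumcneq0}) is to stay with the \emph{second}, uniform branch of Theorem \ref{thm:sumsaliesum}(2), $\CF_\bc(X)\ll |\bc|^{1+\varepsilon}X\bigl(\Upsilon_{31/64}(m)(\log X)\exp(-\tfrac{1}{64}(\log X)^{33/64})+(\log X)^{1983/1984}\bigr)$, and to observe via Lemma \ref{le:upsilon} that for $X\asymp Q$ and $m\ll B^{2-\theta}$ the factor $\exp(-\tfrac{1}{64}(\log X)^{33/64})$ beats $\Upsilon_{31/64}(m)$, so the whole thing collapses to $|\bc|^{1+\varepsilon}X(\log B)^{1983/1984}$ with \emph{no} $m$-loss. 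Then, as you already describe for $-F^*(\bc)\neq\square$, one interpolates between the harder and simple estimates in the $r$-integral to manufacture a factor $|\bc|^{-N}$, after which the $\bc$-sum converges for free (no count of $\{-F^*(\bc)=\square\}$ is needed). Note in passing that your Abel summation against $\widehat{\CI}^*_{t/Q}(w;\bc/L)/t$ (i.e.\ exponent $1$) makes $\int_0^c r^{-3/2-\varepsilon}\,\mathrm{d}r$ diverge under the harder estimate; the paper either inserts an auxiliary exponent $\alpha<\tfrac12$ (Proposition \ref{prop:qsumcneq0}) or truncates the range of $q$ — and you should likewise carry out the harder/simple interpolation dyadically (or via $\alpha<\tfrac12$) rather than formally. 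Finally, two smaller points: (i) Lemma \ref{le:Isimplehard} states the derivative bound only for $\widehat{\CI}_r$, not $\widehat{\CI}^*_r$, so one should first pass to $\widehat{\CI}_r$ via Lemma \ref{le:compareCGBCG} (the paper does exactly this); (ii) the truncation should be to $|\bc|\le B^\varepsilon$, not $|\bc|\le Q^{1+\varepsilon}$, so that the boundary term of size $(B|\bc|)^{-1/2+\varepsilon}$ and the $O(B^{-\theta/5+\varepsilon})$ from Lemma \ref{le:compareCGBCG} remain negligible after summing over $\bc$.
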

 
To describe our result assuming $\omega(m)=O(1)$, let us define
 \begin{equation}\label{eq:CB}
	\CJ(\bc):=\int_{0}^\infty\frac{\widehat{\CI}_{r}\left(w;\frac{\bc}{L}\right)}{r}\operatorname{d}r=\int_{0}^\infty \int_{\BR^3}w(\bt)h\left(r,F(\bt)\right)\e_r\left(-\frac{1}{L}\bc\cdot\bt\right) \frac{\operatorname{d}r\operatorname{d}\bt}{r},
\end{equation}
 and \begin{equation}\label{eq:CK}
 	\CK:=\frac{1}{L^4}\sum_{\substack{\bc\in\BZ^3\setminus\boldsymbol{0}\\ -F^*(\bc)=\square}}\eta(\bc)\CJ(\bc),
 \end{equation}
where $\eta(\bc)$ given in Theorem \ref{thm:sumsaliesum} (2). 
%As a consequence of Lemma \ref{le:Isimplehard} (see also  \cite{PART2}), 
We also recall our choice of $Q$ \eqref{eq:Q}.
\begin{theorem}\label{thm:csumneq0}
	Assuming \eqref{eq:mB2theta} and \eqref{eq:omegam}, we have
	$$\underset{\substack{\bc\in\BZ^{3}\setminus\boldsymbol{0},q\ll Q}}{\sum\sum}\frac{\widehat{S}_{q}(\bc)\widehat{I}_{q}(w;\bc)}{(qL)^3}=\frac{B^3}{L^2}\CK +O \left(\frac{B^3}{(\log B)^{\frac{1}{4}(1-\frac{\sqrt{2}}{2})}}\right).$$
	Moreover
\begin{equation}\label{eq:CKbd}
		\CK=O(1).
\end{equation}
\end{theorem}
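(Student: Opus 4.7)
The plan is to first apply the factorisation \eqref{eq:IJ} to write the double sum as
$$
\frac{B^3}{L^6}\sum_{\bc\neq\boldsymbol{0}}\Sigma_{\bc},\quad \Sigma_{\bc}:=\sum_{q\ll Q}\frac{\e_{qL^2}(\bc\cdot\blambda_m)\widehat{S}_q(\bc)}{q^3}\widehat{\CI}^*_{q/Q}(w;\bc/L),
$$
so that the arithmetic data is packaged into $\CF_{\bc}(X)$ of \eqref{eq:CFc}. The simple/harder estimates of Lemma \ref{le:Isimplehard} allow me to truncate the $\bc$-sum to $|\bc|\ll B^A$ for an arbitrarily large $A$ at negligible cost, so such a cutoff is henceforth assumed. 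For each fixed $\bc\neq\boldsymbol{0}$, I view $\Sigma_{\bc}=\int_0^\infty X^{-1}\widehat{\CI}^*_{X/Q}(w;\bc/L)\,d\CF_{\bc}(X)$ in Stieltjes form and perform Abel summation, using the evaluation of $\CF_{\bc}$ supplied by Theorem \ref{thm:sumsaliesum}.

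For $\bc$ with $-F^*(\bc)\neq\square$, Theorem \ref{thm:sumsaliesum}(1) yields only an upper bound which, coupled with the size bounds on $\widehat{\CI}^*_r$ and $r\,\partial_r\widehat{\CI}^*_r$ from Lemma \ref{le:Isimplehard}, shows that the total contribution of such $\bc$ is $O\bigl(B^3(\log B)^{-\frac{1}{4}(1-\frac{\sqrt{2}}{2})+\varepsilon}\bigr)$. This is the bottleneck that determines the stated error exponent. For $\bc$ with $-F^*(\bc)=\square$ (including $F^*(\bc)=0$), Theorem \ref{thm:sumsaliesum}(2) provides $\CF_{\bc}(X)=\eta(\bc)X+E_{\bc}(X)$, and Abel summation together with the substitution $r=X/Q$ extracts
$$
\Sigma_{\bc}=\eta(\bc)\int_0^\infty \widehat{\CI}^*_r(w;\bc/L)\,\frac{dr}{r}+(\text{error involving }E_{\bc}).
$$
Lemma \ref{le:CIBCI} replaces $\widehat{\CI}^*_r$ by $\widehat{\CI}_r$ modulo $O(r^{-2}B^{-\theta})$ pointwise; since $\int_0^\infty \widehat{\CI}_r/r\,dr$ is absolutely convergent (harder estimate for small $r$, simple estimate for large $r$), this produces $\eta(\bc)\CJ(\bc)$ up to an acceptable error.

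Summing over $\bc\neq\boldsymbol{0}$ with $-F^*(\bc)=\square$ recovers $(B^3/L^6)\cdot L^4\CK=(B^3/L^2)\CK$ as the main term. The bound $\CK=O(1)$ will then follow by combining: under assumption \eqref{eq:omegam}, the bound $\eta(\bc)\ll \Upsilon_{\kappa_0}(m)\ll 1$ from Theorem \ref{thm:sumsaliesum}(2) holds uniformly, with some $\kappa_0\in(0,1)$; while Lemma \ref{le:Isimplehard} gives $\CJ(\bc)\ll_N |\bc|^{-N}$ for any $N\geqslant 1$, obtained by splitting the $r$-integral at $r=|\bc|^{-\delta}$ for a suitable $\delta>0$ and invoking respectively the harder and simple estimates. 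The series defining $\CK$ is therefore absolutely convergent.

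The main obstacle will be coordinating the various error terms: the polynomial $|\bc|^{3+\varepsilon}$ factor in Theorem \ref{thm:sumsaliesum}(1), the powers $X^{63/64}$ and $X^{43\beta/8}$ of part (2), the $B^{-\theta}$ loss from Lemma \ref{le:CIBCI}, and the $|\bc|$-decay rate from Lemma \ref{le:Isimplehard} all have to be balanced with appropriate choices of the truncation level $B^A$ and of the free parameter $\beta$, so that after summation over $\bc$ the net error is absorbed in $B^3(\log B)^{-\frac{1}{4}(1-\frac{\sqrt{2}}{2})}$. The non-square case dictates the final exponent, while the square case demands the more delicate asymptotic extraction.
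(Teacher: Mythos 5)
Your overall strategy matches the paper: split the $\bc$-sum according to $-F^*(\bc)=\square$ or not, use Theorem \ref{thm:sumsaliesum} for the arithmetic part, perform Abel summation against the oscillatory integral, extract $\eta(\bc)\CJ(\bc)$ in the square case, then control the $\bc$-sum with the rapid decay of $\CJ(\bc)$. The normalisation $B^3/L^6\cdot L^4\CK=B^3\CK/L^2$ and the bound $\CK=O(1)$ are recovered exactly as in the paper.

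However, there are two technical gaps in your plan, both of which the paper resolves by ordering the reductions differently. First, you perform Abel summation directly against $X\mapsto \widehat{\CI}^*_{X/Q}(w;\bc/L)/X$, which requires bounds on $r\partial_r\widehat{\CI}^*_r(w;\bb)$ for $\bb\neq\boldsymbol{0}$; but Lemma \ref{le:Isimplehard} records the derivative bound only for $\widehat{\CI}_r$, not for $\widehat{\CI}^*_r$, when $\bb\neq\boldsymbol{0}$. The paper sidesteps this by replacing $\widehat{\CI}^*$ with $\widehat{\CI}$ \emph{before} the partial summation (Lemma \ref{le:compareCGBCG}), and only then integrates by parts against the better-documented kernel. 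Second, and more seriously, you cannot simply feed the asymptotic $\CF_{\bc}(X)=\eta(\bc)X+E_{\bc}(X)$ into the Stieltjes integral over the full range $X\in(0,cQ]$. The error bound for $E_{\bc}$ in Theorem \ref{thm:sumsaliesum}(2) contains a factor $m^{1/2}\ll B^{1-\theta/2}$, so for small $X$ (say $X=O(1)$) the bound on $E_{\bc}(X)$ is \emph{enormous} compared to the main term; integrating this against $\partial_X(\widehat{\CI}^*_{X/Q}/X)$ over $1\leq X\ll Q^{1-\vartheta}$ produces an error that grows polynomially in $B$. The paper therefore first discards the range $q\leq Q^{1-\vartheta}$ using the "harder estimate" of Lemma \ref{le:Isimplehard} (which gives a factor $(q/Q)^{1/2}$ that is small there, see \eqref{eq:smallrange}), and only applies the asymptotic for $\CF_{\bc}$ on $Q^{1-\vartheta}<q\ll Q$, where $X^{63/64}$ and $m^{1/2}$ are indeed of lower order than $X$. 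A similar dichotomy is needed to make your pointwise $O(r^{-2}B^{-\theta})$ replacement from Lemma \ref{le:CIBCI} work: $\int_0^c r^{-3}\,dr$ diverges, so the replacement can only be used for $r$ bounded away from $0$, while the small-$r$ regime is handled via the harder estimate applied to both $\widehat{\CI}^*_r$ and $\widehat{\CI}_r$ individually. Both fixes are straightforward, but they are exactly the content your plan elides; incorporating them essentially reproduces the paper's Lemma \ref{le:compareCGBCG} together with Propositions \ref{prop:qsumFstarcneqsq} and \ref{prop:qsumFstarc=sq}.
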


\subsubsection{Preliminary manipulation}
By \eqref{eq:IJ}, the summand equals $\left(\frac{B}{L^2}\right)^3 \CG^{*}(q;\bc)$, where
\begin{equation}\label{eq:CG}
	\CG^{*}(q;\bc):=\frac{\e_{qL^2}(\bc\cdot\blambda_m)\widehat{S}_{q}(\bc)\widehat{\CI}_{\frac{q}{Q}}^{*}\left(w;\frac{\bc}{L}\right)}{q^3}.
\end{equation}
We shall first compare the sum above with $\sum_{q\ll Q}\CG(q;\bc)$ where $\CG(q;\bc)$ is defined analogously to $\CG^{*}(q;\bc)$  but with $\widehat{\CI}_{\frac{q}{Q}}^{*}\left(w;\frac{\bc}{L}\right)$ replaced by $\widehat{\CI}_{\frac{q}{Q}}\left(w;\frac{\bc}{L}\right)$ \eqref{eq:CIusual}.
\begin{lemma}\label{le:compareCGBCG}
	Assuming \eqref{eq:mB2theta}, uniformly for any $\bc\in\BZ^3\setminus\boldsymbol{0}$, we have
	$$\sum_{q\ll Q}\CG^{*}(q;\bc)=\sum_{q\ll Q}\CG(q;\bc)+O_\varepsilon( B^{-\frac{1}{5}\theta+\varepsilon}).$$
\end{lemma}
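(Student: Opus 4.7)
The approach is to combine the pointwise estimate from Lemma \ref{le:CIBCI} with the harder estimate of Lemma \ref{le:Isimplehard}. Setting $r=q/Q$, Lemma \ref{le:CIBCI} yields
$$\big|\widehat{\CI}_r^*(w;\bc/L)-\widehat{\CI}_r(w;\bc/L)\big|\ll r^{-2}B^{-\theta}.$$
On the other hand, since $\bc\neq\boldsymbol 0$ implies $|\bc|/L\geqslant 1/L$, the harder estimate of Lemma \ref{le:Isimplehard} applied separately to $\widehat{\CI}_r^*$ and $\widehat{\CI}_r$ together with the triangle inequality gives
$$\big|\widehat{\CI}_r^*(w;\bc/L)-\widehat{\CI}_r(w;\bc/L)\big|\ll_\varepsilon r^{1/2-\varepsilon},$$
uniformly in $\bc\in\BZ^3\setminus\boldsymbol 0$ (with the implied constant depending on $L$). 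Therefore
$$\big|\widehat{\CI}_r^*(w;\bc/L)-\widehat{\CI}_r(w;\bc/L)\big|\ll_\varepsilon \min\!\big(r^{-2}B^{-\theta},\,r^{1/2-\varepsilon}\big).$$
The first bound is decreasing in $r$, the second increasing; their minimum is therefore maximised at the crossover point $r_0\asymp B^{-2\theta/5}$, where both bounds are of order $B^{-\theta/5+\varepsilon}$. Hence
$$\big|\widehat{\CI}_r^*(w;\bc/L)-\widehat{\CI}_r(w;\bc/L)\big|\ll_\varepsilon B^{-\theta/5+\varepsilon}$$
uniformly in $r>0$ and $\bc\in\BZ^3\setminus\boldsymbol 0$.

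For the arithmetic part, Proposition \ref{prop:S2} and \eqref{eq:S1bd} with $d=3$ yield
$$\frac{|\widehat{S}_q(\bc)|}{q^3}\ll_\varepsilon q_1^{-1+\varepsilon}q_2^{-1/2},\qquad q=q_1q_2,\ q_2\mid (m\Omega)^\infty,\ \gcd(q_1,q_2\Omega)=1,$$
uniformly in $\bc$. Summing over $q\ll Q$,
$$\sum_{q\ll Q}\frac{|\widehat{S}_q(\bc)|}{q^3}\ll_\varepsilon (\log Q)\,\Upsilon_{1/2}(m\Omega)\ll_\varepsilon B^\varepsilon,$$
using the growth condition $m\ll B^{2-\theta}$ from \eqref{eq:mB2theta} together with the standard estimate $\Upsilon_\kappa(n)\ll_{\kappa,\varepsilon}n^\varepsilon$. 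Multiplying the two bounds gives the claimed estimate $\ll_\varepsilon B^{-\theta/5+2\varepsilon}$, which becomes $B^{-\theta/5+\varepsilon}$ after renaming.

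The only real subtlety is the interpolation of the two bounds on the oscillatory integral difference. The exponent $-\theta/5$ is the natural balance point between the $C^1$-type bound $r^{-2}B^{-\theta}$ arising from Heath-Brown's smooth weight $h$ and the harder-estimate bound $r^{1/2-\varepsilon}$ specific to $d=3$; it is not optimal but comfortably sufficient, since the principal error terms in Theorems \ref{thm:mainterm} and \ref{thm:mainsecondary} are of coarser (logarithmic) size. Note also that no decay in $|\bc|$ is needed: the uniform bound $r^{1/2-\varepsilon}$ (rather than $(r/|\bc|)^{1/2-\varepsilon}$) already suffices because the arithmetic factor $\sum_q|\widehat{S}_q(\bc)|/q^3$ is controlled purely by the $q$-aspect.
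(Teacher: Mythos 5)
Your proof is correct, and it takes a genuinely different (and arguably cleaner) route from the paper's. The paper splits the $q$-range at $q=B^\delta$: for $q\leqslant B^\delta$ it does \emph{not} bound the difference $\CG^{*}-\CG$ at all, but instead bounds $\sum_{q\leqslant B^\delta}\CG^{*}(q;\bc)$ and $\sum_{q\leqslant B^\delta}\CG(q;\bc)$ separately using the harder estimate from Lemma \ref{le:Isimplehard} together with the Kloosterman refinement (Proposition \ref{prop:Kloosterman} with partial summation), obtaining $O_\varepsilon(|\bc|^{-1/2+\varepsilon}B^{-1/2+\delta/2+\varepsilon})$ for each; for $q>B^\delta$ it uses Lemma \ref{le:CIBCI} to bound the difference by $B^{2-\theta}\sum_{q>B^\delta}|\widehat{S}_q(\bc)|/q^5\ll_\varepsilon B^{2-2\delta-\theta+\varepsilon}$, and then chooses $\delta=1-\frac{2}{5}\theta$ to balance. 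You instead interpolate pointwise on the oscillatory-integral difference itself, taking $\min\bigl(r^{-2}B^{-\theta},\,r^{1/2-\varepsilon}\bigr)\ll B^{-\theta/5+\varepsilon}$ uniformly in $r$, and then control the arithmetic factor via the crude convergent bound $\sum_{q\ll Q}|\widehat{S}_q(\bc)|/q^3\ll_\varepsilon B^\varepsilon$ coming directly from \eqref{eq:S1bd} and Proposition \ref{prop:S2} — bypassing the Kloosterman-refinement machinery entirely. Both approaches balance the same two estimates (the $r^{-2}B^{-\theta}$ smoothness bound on $h$ against the square-root decay of the oscillatory integral), which is why the crossover $r_0\asymp B^{-2\theta/5}$ in your argument corresponds exactly to the paper's choice $\delta=1-\frac{2}{5}\theta$ (i.e.\ $Q^{1-\delta}/Q\asymp B^{-2\theta/5}$). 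The paper's route yields an extra factor $|\bc|^{-1/2+\varepsilon}$ that the lemma statement does not need; your route is structurally simpler and requires fewer prior inputs.
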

\begin{proof}
	By Proposition \ref{prop:Kloosterman} with partial summation (with no condition on $q_{m\Omega}$), and on using the harder estimate for $\widehat{\CI}_{\frac{q}{Q}}^{*}\left(w;\frac{\bc}{L}\right)$ in Lemma \ref{le:Isimplehard}, we have, for any $\delta>0$ 
	\begin{equation}\label{eq:smallrange}
		\begin{split}
		\sum_{q\leqslant B^\delta}\CG^{*}(q;\bc)&\ll_{\varepsilon} \sum_{q\leqslant B^\delta}\frac{\left|\widehat{S}_q(\bc)\right|}{q^{3}}\left(\frac{q/Q}{|\bc|}\right)^{\frac{1}{2}-\varepsilon}\\
		&\ll_\varepsilon (|\bc|Q)^{-\frac{1}{2}+\varepsilon}\sum_{q\leqslant B^\delta}\frac{\left|\widehat{S}_q(\bc)\right|}{q^{\frac{5}{2}}}\ll_{\varepsilon} |\bc|^{-\frac{1}{2}+\varepsilon}B^{-\frac{1}{2}+\frac{1}{2}\delta+\varepsilon},
		\end{split}
	\end{equation} and the same bound also holds for $\sum_{q\leqslant B^\delta}\CG(q;\bc)$.
	
	Now by Lemma \ref{le:CIBCI} and again by Proposition \ref{prop:Kloosterman} with partial summation, 
	\begin{align*}
		\sum_{B^\delta<q\ll Q}\left(\CG^{*}(q;\bc)-\CG(q;\bc)\right)&\ll \sum_{B^\delta<q\ll Q} \frac{\left|\widehat{S}_q(\bc)\right|}{q^3}\left|\widehat{\CI}_{\frac{q}{Q}}\left(w;\frac{\bc}{L}\right)-\widehat{\CI}_{\frac{q}{Q}}^{*}\left(w;\frac{\bc}{L}\right)\right|\\ &\ll B^{2-\theta} \sum_{B^\delta<q\ll Q} \frac{\left|\widehat{S}_q(\bc)\right|}{q^5}
		%&\ll B^{}\sum_{\substack{q_2\leqslant B^\delta\\ q_2\mid(m\Omega)^\infty}}q_2^{-\frac{1}{2}} \sum_{\substack{q_1\leqslant \frac{B^\delta}{q_2}\\ \gcd(q_1,\Omega)=1}}\frac{2^{\omega(q_1)}}{q_1} 
		\ll_\varepsilon B^{2-2\delta-\theta+\varepsilon}.
	\end{align*}
	On choosing $\delta=1-\frac{2}{5}\theta$, we therefore conclude the proof of Lemma \ref{le:compareCGBCG}.
\end{proof}

\subsubsection{Proof of Theorem \ref{thm:csumneq0noomegem}}
We first prove the following elementary (non-optimal) estimate for the maximal order of \eqref{eq:upsilon}.
\begin{lemma}\label{le:upsilon}
	For $0<\kappa<1$, there exists $C_\kappa>0$ such that uniformly for any $n$, 
	$$\Upsilon_{\kappa}\ll \exp\left(C_\kappa\frac{(\log n)^{1-\kappa}}{\log\log n}\right).$$
\end{lemma}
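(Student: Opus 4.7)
The plan is to take logarithms and reduce the estimate of $\Upsilon_{\kappa}(n)$ to a bound on the prime sum $\sum_{p\mid n}p^{-\kappa}$, then optimise over the support of $n$. Since $-\log(1-x)=x+O(x^{2})$ uniformly for $0<x\leqslant \tfrac{1}{2}$, and $\sum_{p}p^{-2\kappa}$ converges for any $\kappa>\tfrac{1}{2}$ (a finite sum over $p=2$ handles the range $\kappa\leqslant \tfrac{1}{2}$), we have
$$\log\Upsilon_{\kappa}(n) = \sum_{p\mid n}\bigl(p^{-\kappa}+O(p^{-2\kappa})\bigr) = \sum_{p\mid n}p^{-\kappa}+O_{\kappa}(1).$$
So it suffices to show $\sum_{p\mid n}p^{-\kappa}\ll_{\kappa}(\log n)^{1-\kappa}/\log\log n$ for $n$ sufficiently large.

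Next, I would observe that for any fixed number of prime divisors $k=\omega(n)$, the quantity $\sum_{p\mid n}p^{-\kappa}$ is maximised when the prime divisors of $n$ are exactly the first $k$ primes $p_{1}<p_{2}<\cdots<p_{k}$, since $p\mapsto p^{-\kappa}$ is decreasing. Therefore
$$\sum_{p\mid n}p^{-\kappa}\leqslant \sum_{j=1}^{k}p_{j}^{-\kappa}.$$
By the prime number theorem (or Chebyshev's elementary bounds) one has $p_{j}\asymp j\log j$ for $j\geqslant 2$, so
$$\sum_{j=1}^{k}p_{j}^{-\kappa}\ll 1+\int_{2}^{k}\frac{\operatorname{d}t}{(t\log t)^{\kappa}}\ll_{\kappa}\frac{k^{1-\kappa}}{(\log k)^{\kappa}},$$
the last step following because $\kappa<1$ and the integrand contributes most of its mass near the upper endpoint.

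The final step is to convert the constraint "$n$ has $k$ prime divisors" into a lower bound for $\log n$. Since $n\geqslant \prod_{j=1}^{k}p_{j}=\exp\bigl(\theta(p_{k})\bigr)$, and $\theta(p_{k})\asymp p_{k}\asymp k\log k$ by Chebyshev, we obtain $\log n\gg k\log k$, whence $k\ll \log n/\log\log n$ and correspondingly $\log k\asymp \log\log n$. Substituting yields
$$\sum_{p\mid n}p^{-\kappa}\ll_{\kappa}\frac{k^{1-\kappa}}{(\log k)^{\kappa}}\ll_{\kappa}\frac{(\log n)^{1-\kappa}}{(\log\log n)^{1-\kappa}(\log\log n)^{\kappa}}=\frac{(\log n)^{1-\kappa}}{\log\log n},$$
and exponentiating gives the lemma with any $C_{\kappa}$ strictly larger than the implied constant above. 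There is no real obstacle; the only mildly delicate point is keeping track that the maximiser is indeed attained by the primorial $p_{1}\cdots p_{k}$ and that the integral estimate is sharp enough, both of which are standard.
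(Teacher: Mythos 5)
Your proposal is correct in substance and follows essentially the same route as the paper: both arguments bound $\Upsilon_\kappa(n)$ by replacing the prime divisors of $n$ with the first $\omega(n)$ primes (the paper works with $q(n)=\lfloor A\omega(n)\log\omega(n)\rfloor$ and the product over $p\leqslant q(n)$, which is the same idea), estimate $\sum_{p\leqslant y}p^{-\kappa}\ll y^{1-\kappa}/\log y$, and then use $\omega(n)\ll\log n/\log\log n$. Two small points of imprecision in your write-up that are worth tightening but do not threaten the result: first, the claim $\sum_{p\mid n}O(p^{-2\kappa})=O_\kappa(1)$ is false when $\kappa\leqslant\tfrac12$ (the parenthetical about ``a finite sum over $p=2$'' doesn't rescue it), though this is harmless since the error $\sum_{p\mid n}p^{-2\kappa}$ is in any case dominated by the main term $\sum_{p\mid n}p^{-\kappa}$ and one should simply write $\log\Upsilon_\kappa(n)\ll_\kappa\sum_{p\mid n}p^{-\kappa}$; second, the assertion $\log k\asymp\log\log n$ fails when $k=\omega(n)$ is small (e.g.\ bounded), so the substitution should be split into a regime where $k$ is not too small (where $\log k\gg\log\log n$ holds) and a complementary regime where $\sum_{j\leqslant k}p_j^{-\kappa}$ is trivially negligible compared with $(\log n)^{1-\kappa}/\log\log n$.
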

\begin{proof}
	Let $q(n):=\lfloor A\omega(n)\log \omega(n)\rfloor$, where $A>0$ is sufficiently large and fixed, such that for any $n$, $$\omega(n)\leqslant \#\{p:p\leqslant q(n)\}.$$ Then by the maximal order estimate for $\omega$, there exists another $A'>0$ such that $q(n)\leqslant A'\log n$, uniformly for all $n$. Let $C_{\kappa}^\prime:=(1-2^{-\kappa})^{-1}$. Then
	\begin{align*}
		\Upsilon_{\kappa}(n)\leqslant \prod_{p\leqslant q(n)}(1-p^{-\kappa})^{-1} &\leqslant \prod_{p\leqslant q(n)}\left(1+\frac{C_{\kappa}^\prime}{p^{\kappa}}\right)\\ &\leqslant \exp\left(\sum_{p\leqslant q(n)}\frac{C_{\kappa}^\prime}{p^\kappa}\right)\\ &\leqslant \exp\left(C_\kappa\frac{q(n)^{1-\kappa}}{\log (q(n))}\right),
	\end{align*} for another uniform $C_\kappa>0$.
\end{proof}
\begin{proposition}\label{prop:qsumcneq0}
	Assuming \eqref{eq:mB2theta}, for any $\bc\in\BZ^3\setminus\boldsymbol{0}$, we have
	$$\sum_{q\ll Q}\CG(q;\bc)\ll_{N,\varepsilon} (B|\bc|)^{-\frac{1}{2}+\varepsilon}+ |\bc|^{-N}(\log B)^{\frac{1983}{1984}}.$$
\end{proposition}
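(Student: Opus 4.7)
The plan is to split the $q$-sum at a threshold $Y$ chosen to be bounded, and to handle the two resulting ranges via distinct tools: a direct triangle-inequality estimate in the small range (producing the first summand) and Abel summation anchored in Theorem~\ref{thm:sumsaliesum} in the large range (producing the second summand).

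For the small range $q\leq Y$, I apply the harder estimate of Lemma~\ref{le:Isimplehard}, $|\widehat{\CI}_{q/Q}(w;\bc/L)|\ll_\varepsilon (q/(Q|\bc|))^{1/2-\varepsilon}$, together with the standard bound $\sum_{q\leq Y}|\widehat{S}_q(\bc)|/q^2\ll Y\log Y$ implicit in Propositions~\ref{prop:S1value}--\ref{prop:S2}. Abel summation then yields $\sum_{q\leq Y}|\widehat{S}_q(\bc)|/q^{5/2+\varepsilon}\ll_\varepsilon Y^{1/2+\varepsilon}$, hence
\[
\sum_{q\leq Y}|\CG(q;\bc)|\ll_\varepsilon (Q|\bc|)^{-1/2+\varepsilon}\sum_{q\leq Y}|\widehat{S}_q(\bc)|/q^{5/2+\varepsilon}\ll_\varepsilon (B|\bc|)^{-1/2+\varepsilon}Y^{1/2+\varepsilon},
\]
and taking $Y=O(1)$ delivers the first summand $(B|\bc|)^{-1/2+\varepsilon}$.

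For the large range $q>Y$, I employ Abel summation through the partial sums $\CF_{\bc}(x)$ of \eqref{eq:CFc}. Setting $a_q:=\e_{qL^2}(\bc\cdot\blambda_m)\widehat{S}_q(\bc)/q^2$ and $K(q):=\widehat{\CI}_{q/Q}(w;\bc/L)/q$, so that $\CG(q;\bc)=a_qK(q)$, I have
\[
\sum_{Y<q\leq X_0}\CG(q;\bc)=\CF_{\bc}(X_0)K(X_0)-\CF_{\bc}(Y)K(Y)-\int_Y^{X_0}\CF_{\bc}(x)K'(x)\operatorname{d}x.
\]
I feed in the bound $\CF_{\bc}(x)\ll|\bc|^{3+\varepsilon}x(\log x)^{1983/1984}$ from Theorem~\ref{thm:sumsaliesum} (the worse of Cases~(1) and~(2), with the $1983/1984$ inherited from Case~(2)), together with the simple estimates of Lemma~\ref{le:Isimplehard} applied to $K,K'$, which furnish the target $|\bc|^{-N}$ decay.

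The main technical obstacle is the factor $Q\sim B$ carried by the simple estimate $K'(x)\ll_N Qx^{-3}|\bc|^{-N}$, which a naive pointwise estimate would promote into the final error. I plan to circumvent it through an additional integration by parts using the identity $xK'(x)=\tfrac{\operatorname{d}}{\operatorname{d}x}\widehat{\CI}_{x/Q}(w;\bc/L)-K(x)$, whereby $\int_Y^{X_0}xK'(x)\operatorname{d}x$ is reduced to the boundary difference $[\widehat{\CI}_{x/Q}(w;\bc/L)]_Y^{X_0}$ and the convergent tail $\int_{Y/Q}^{X_0/Q}\widehat{\CI}_r(w;\bc/L)/r\operatorname{d}r$, reminiscent of the constant $\CJ(\bc)$ from \eqref{eq:CB}. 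Each of these pieces is controlled by $O_N(|\bc|^{-N})$ upon judiciously mixing the simple and harder estimates of Lemma~\ref{le:Isimplehard} on different scales of $r$. Choosing $N$ sufficiently large absorbs the polynomial factor $|\bc|^{3+\varepsilon}$ inherited from $\CF_{\bc}$, leaving the clean bound $|\bc|^{-N}(\log B)^{1983/1984}$ for the large-$q$ contribution and completing the proposition when combined with the small-$q$ estimate.
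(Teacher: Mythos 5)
Your overall architecture matches the paper's: the first summand comes from the harder estimate near the lower end of the $q$-range, and the second from partial summation against $\CF_\bc$, feeding in the bound from Theorem~\ref{thm:sumsaliesum} and then controlling the oscillatory-integral kernel by a scale-dependent mix of the simple and harder estimates of Lemma~\ref{le:Isimplehard}. The one structural difference is normalization: the paper uses $\CF_\bc(\alpha;t):=\sum_{q\leq t}\e_{qL^2}(\bc\cdot\blambda_m)\widehat{S}_q(\bc)/q^{3-\alpha}$ with a parameter $0<\alpha<\tfrac12$, so that $\sup_{T\ll Q}|\CF_\bc(\alpha;T)|\ll Q^\alpha(\log B)^{1983/1984}|\bc|^{3+\varepsilon}$ while $\int_1^{cQ}|\partial_t(\widehat{\CI}_{t/Q}(w;\bc/L)/t^\alpha)|\,dt=Q^{-\alpha}\int_{1/Q}^c|\partial_r(\widehat{\CI}_r/r^\alpha)|\,dr$; the $Q^{\pm\alpha}$ cancel, and the condition $\tfrac12+\alpha+\varepsilon<1$ guarantees that the harder estimate makes the small-$r$ piece of the last integral convergent. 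You instead keep $\alpha=1$ (i.e.\ the $\CF_\bc$ of \eqref{eq:CFc}), in which case the weight $x$ that you must carry into the $t$-integral is what absorbs the stray power of $Q$; both choices end up at the same simple-versus-harder split around $r\asymp|\bc|^{-2N/3}$.

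However, there is a genuine slip in the step where you invoke the identity $xK'(x)=\tfrac{d}{dx}\widehat{\CI}_{x/Q}(w;\bc/L)-K(x)$. You compute $\int_Y^{X_0}xK'(x)\,dx$ by integration by parts, but that is not what you need: after pulling $\sup_x|\CF_\bc(x)|/x\ll(\log B)^{1983/1984}|\bc|^{3+\varepsilon}$ out of $\int_Y^{X_0}\CF_\bc(x)K'(x)\,dx$, what remains is $\int_Y^{X_0}x|K'(x)|\,dx$ with the absolute value inside, and a formula for $\int xK'(x)\,dx$ does not upper-bound $\int x|K'(x)|\,dx$. You can only "pull $\CF_\bc(x)/x$ out and integrate $xK'$ by parts" if $\CF_\bc(x)$ is literally a constant multiple of $x$, which it is not (and, for $-F^*(\bc)\neq\square$, Theorem~\ref{thm:sumsaliesum} gives you only an upper bound, no asymptotic). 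The correct replacement is to use the same identity pointwise, via the triangle inequality: $x|K'(x)|\leq Q^{-1}|\partial_r\widehat{\CI}_{x/Q}(w;\bc/L)|+|K(x)|$, so that after the substitution $x=rQ$ one obtains
\[
\int_Y^{X_0}x|K'(x)|\,dx\;\leq\;\int_{Y/Q}^{c}\Bigl(\bigl|\partial_r\widehat{\CI}_r(w;\bc/L)\bigr|+r^{-1}\bigl|\widehat{\CI}_r(w;\bc/L)\bigr|\Bigr)\,dr,
\]
which carries no $Q$. Bounding the integrand by $\min\bigl(r^{-2}|\bc|^{-N},\ r^{-1/2-\varepsilon}|\bc|^{-1/2+\varepsilon}\bigr)$ and splitting at $r_0=|\bc|^{-2N/3}$ gives $O_{N,\varepsilon}(|\bc|^{-N/4})$, and your proof then closes exactly as intended. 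In short: the destination is the same as the paper's, but you must replace the boundary-term integration by parts with this pointwise triangle-inequality bound.
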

\begin{proof}
	Let $0<\alpha<1$ be fixed and let $$\CF_{\bc}(\alpha;t):=\sum_{q\leqslant t}\frac{\e_{qL^2}(\bc\cdot\blambda_m)\widehat{S}_{q}(\bc)}{q^{3-\alpha}}.$$ Then by partial summation, the sum to be estimated equals
	$$-\int_{1}^{cQ}\CF_{\bc}(\alpha;t)\frac{\partial}{\partial t}\left(\frac{\widehat{\CI}_{\frac{t}{Q}}\left(w;\frac{\bc}{L}\right)}{t^\alpha}\right)\operatorname{d}t+\left[\CF_{\bc}(\alpha;t)\frac{\widehat{\CI}_{\frac{t}{Q}}\left(w;\frac{\bc}{L}\right)}{t^\alpha}\right]_{1}^{cQ},$$
where $c=O(1)$ such that $\widehat{\CI}_{r}(w;\bb)$ vanishes for $r\geqslant c$. Then the variation term is, by the ``harder estimate'' in Lemma \ref{le:Isimplehard}, $O_\varepsilon((Q|\bc|)^{-\frac{1}{2}+\varepsilon})$. %Using Proposition \ref{prop:S2} and the ``harder estimate'' in Lemma \ref{le:Isimplehard}, the big O-term is \begin{equation}\label{eq:Oterm}	\ll_{\varepsilon}\sup_{q\ll Q} \frac{q^{\frac{5}{2}}\left(\frac{q}{Q}\right)^{\frac{1}{2}-\varepsilon}|\bc|^{-\frac{1}{2}+\varepsilon}}{q^3}\ll_{\varepsilon} (B|\bc|)^{-\frac{1}{2}+\varepsilon}.\end{equation}

	We are now concerned with the integral, which is clearly
	\begin{equation}\label{eq:qsumstep}
		\leqslant \left(\sup_{T\ll Q}\left|\CF_{\bc}(\alpha;T)\right|\right)\int_{1}^{cQ}\left|\frac{\partial }{\partial t}\left(\frac{\widehat{\CI}_{\frac{t}{Q}}\left(w;\frac{\bc}{L}\right)}{t^\alpha}\right)\right|\operatorname{d}t,
	\end{equation} 
	By Theorem \ref{thm:sumsaliesum} with partial summation (more precisely if $-F^*(\bc)\neq\square$ we apply Theorem \ref{thm:sumsaliesum} (1) on using the second bound, if $-F^*(\bc)=\square$ we apply the second bound of Theorem \ref{thm:sumsaliesum} (2)), the factor on the left with a $q$-sum is 
	\begin{align*}
		\ll_{\varepsilon_1} &|\bc|^{3+\varepsilon_1} Q^\alpha\left(\Upsilon_{\frac{31}{64}}(m)(\log Q)
		\exp\left(-\frac{1}{64}(\log Q)^{\frac{33}{64}}\right)+(\log Q)^{\frac{1983}{1984}}\right)\\ \ll &|\bc|^{3+\varepsilon_1}Q^\alpha (\log B)^{\frac{1983}{1984}},
	\end{align*} since by Lemma \ref{le:upsilon}, $$\Upsilon_{\frac{31}{64}}(m)\ll \exp\left(C_{\frac{31}{64}}\frac{(\log m)^\frac{33}{64}}{\log\log m}\right)\ll \exp\left(C_{\frac{31}{64}}\frac{(\log Q)^\frac{33}{64}}{\log\log B}\right).$$
	As for the integral on the right, the change of variable $t\mapsto rQ$ yields that it is
	\begin{align*}
		&= Q^{-\alpha}\int_{Q^{-1}}^{c}\left|\frac{\partial }{\partial r}\left(\frac{\widehat{\CI}_{r}\left(w;\frac{\bc}{L}\right)}{r^\alpha}\right)\right|\operatorname{d}r\\ &\ll Q^{-\alpha}\int_{0}^{c}\left(r^{-\alpha}\left|\frac{\partial \widehat{\CI}_{r}\left(w;\frac{\bc}{L}\right)}{\partial r}\right|+r^{-\alpha-1}\left|\widehat{\CI}_{r}\left(w;\frac{\bc}{L}\right)\right|\right)\operatorname{d}r,
	\end{align*} and by the ``easier'' and the ``harder'' estimates in Lemma \ref{le:Isimplehard}, it is
\begin{align*}
	&\ll_{\varepsilon_0,N} Q^{-\alpha}\int_{0}^{c}\min\left(r^{-(2+\alpha)}|\bc|^{-N},r^{-(\frac{1}{2}+\alpha+\varepsilon_0)}|\bc|^{-\frac{1}{2}+\varepsilon_0}\right)\operatorname{d}r\\ &\ll Q^{-\alpha}\left(|\bc|^{-N}\int_{|\bc|^{-\frac{2N}{3}}}^{c}r^{-(2+\alpha)}\operatorname{d}r+|\bc|^{-\frac{1}{2}+\varepsilon_0} \int_{0}^{|\bc|^{-\frac{2N}{3}}}r^{-(\frac{1}{2}+\alpha+\varepsilon_0)}\operatorname{d}r\right)\\&\ll_{\varepsilon_0,N} Q^{-\alpha}|\bc|^{-\frac{N}{3}(1-2\alpha-\varepsilon_0)}.
\end{align*}
On fixing $0<\alpha<\frac{1}{2}$, $\varepsilon_0,\varepsilon_1$ small enough, and on redefining  $N$, we obtain that the initial integral is 
$$\ll_{N} |\bc|^{-N}(\log B)^{\frac{1983}{1984}}.$$
The proof is thus finished.
\end{proof}
\begin{proof}[Completion of the proof of Theorem \ref{thm:csumneq0noomegem}]
	Using the simple estimate for $\widehat{\CI}_{r}^{*}(w;\bb)$ in Lemma \ref{le:Isimplehard}, we can reduce the $\bc$-sum to those with $|\bc|\leqslant B^\varepsilon$, with an arbitrarily large power-saving. Then by Lemma \ref{le:compareCGBCG} and Proposition \ref{prop:qsumcneq0}, 
	\begin{align*}
		\underset{\substack{\bc\in\BZ^{3}\setminus\boldsymbol{0},|\bc|\leqslant B^\varepsilon\\q\ll Q}}{\sum\sum}\CG^{*}(q;\bc)&\ll_{N,\varepsilon} \left(B^{-\frac{1}{5}\theta+\varepsilon}+B^{-\frac{1}{2}+\varepsilon}\right)\sum_{0<|\bc|\leqslant B^\varepsilon}1+(\log B)^{\frac{1983}{1984}}\sum_{\bc\neq\boldsymbol{0}}|\bc|^{-N}\\ &\ll (\log B)^{\frac{1983}{1984}},
	\end{align*} by taking $N$ large enough.
\end{proof}

\subsubsection{Proof of Theorem \ref{thm:csumneq0}}
\begin{proposition}\label{prop:qsumFstarcneqsq}
	Assuming \eqref{eq:mB2theta} and \eqref{eq:omegam}, for any $\bc\in\BZ^3$ with $-F^*(\bc)\neq \square$, we have
	$$\sum_{q\ll Q}\CG(q;\bc)\ll_{N,\varepsilon} (B|\bc|)^{-\frac{1}{2}+\varepsilon}+ |\bc|^{-N}(\log B)^{-\frac{1}{4}(1-\frac{\sqrt{2}}{2})}.$$
\end{proposition}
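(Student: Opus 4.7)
The plan is to mimic the proof of Proposition \ref{prop:qsumcneq0} step by step, but in the $-F^*(\bc)\neq\square$ regime we will extract a sharper log-saving by exploiting the assumption $\omega(m)=O(1)$. The key observation is that under \eqref{eq:omegam}, the arithmetic factor $\Upsilon_{\frac{1}{4}}(m)=\prod_{p\mid m}(1-p^{-1/4})^{-1}$ is uniformly $O(1)$, so the first (better) bound in Theorem \ref{thm:sumsaliesum} (1) becomes
$$\CF_{\bc}(X)\ll_\varepsilon |\bc|^{3+\varepsilon}X(\log X)^{-\frac{1}{4}(1-\frac{\sqrt{2}}{2})+\varepsilon}$$
with no troublesome $\Upsilon$-factor to absorb via Lemma \ref{le:upsilon}.

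First I would introduce, as in the proof of Proposition \ref{prop:qsumcneq0}, the auxiliary sum
$$\CF_{\bc}(\alpha;t):=\sum_{q\leqslant t}\frac{\e_{qL^2}(\bc\cdot\blambda_m)\widehat{S}_{q}(\bc)}{q^{3-\alpha}}$$
for a small fixed $0<\alpha<\frac{1}{2}$, and apply partial summation to write
$$\sum_{q\ll Q}\CG(q;\bc)=-\int_{1}^{cQ}\CF_{\bc}(\alpha;t)\frac{\partial}{\partial t}\left(\frac{\widehat{\CI}_{\frac{t}{Q}}\left(w;\frac{\bc}{L}\right)}{t^\alpha}\right)\operatorname{d}t+\left[\CF_{\bc}(\alpha;t)\frac{\widehat{\CI}_{\frac{t}{Q}}\left(w;\frac{\bc}{L}\right)}{t^\alpha}\right]_{1}^{cQ}.$$
The boundary contribution at $t=cQ$ is handled exactly as before via the ``harder estimate'' in Lemma \ref{le:Isimplehard}, producing $O_\varepsilon((Q|\bc|)^{-\frac{1}{2}+\varepsilon})$, which feeds the $(B|\bc|)^{-1/2+\varepsilon}$ term in the target bound.

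Next I would bound the integral by $\big(\sup_{T\ll Q}|\CF_{\bc}(\alpha;T)|\big)\int_{1}^{cQ}\big|\partial_t(\widehat{\CI}_{t/Q}(w;\bc/L)/t^\alpha)\big|\operatorname{d}t$ as in \eqref{eq:qsumstep}. For the supremum, partial summation against Theorem \ref{thm:sumsaliesum} (1) together with $\Upsilon_{\frac{1}{4}}(m)=O(1)$ (this is where \eqref{eq:omegam} is used) yields
$$\sup_{T\ll Q}|\CF_{\bc}(\alpha;T)|\ll_\varepsilon |\bc|^{3+\varepsilon}Q^{\alpha}(\log Q)^{-\frac{1}{4}(1-\frac{\sqrt{2}}{2})+\varepsilon}.$$
For the remaining integral over $r=t/Q$, I would split the range at $r=|\bc|^{-2N/3}$ and apply the simple estimate on the inner part and the harder estimate on the outer part as in the proof of Proposition \ref{prop:qsumcneq0}, obtaining a factor $Q^{-\alpha}|\bc|^{-\frac{N}{3}(1-2\alpha-\varepsilon)}$. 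Choosing $\alpha<\frac{1}{2}$ and absorbing into a fresh $N$, the integral contributes $O_{N}(|\bc|^{-N}(\log B)^{-\frac{1}{4}(1-\frac{\sqrt{2}}{2})})$.

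Combining the boundary and integral contributions gives exactly the stated bound. There is no real obstacle: the proof is a direct transplant of Proposition \ref{prop:qsumcneq0}, with the only substantive change being that Theorem \ref{thm:sumsaliesum} (1) replaces Theorem \ref{thm:sumsaliesum} (2) and the $\Upsilon_{\frac{1}{4}}(m)$ prefactor is now harmless by \eqref{eq:omegam}.
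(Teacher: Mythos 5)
Your proposal is correct and follows the same route as the paper: both transplant the partial-summation argument of Proposition \ref{prop:qsumcneq0} verbatim, with the only change being that Theorem \ref{thm:sumsaliesum} (1) (first bound in the $\min$) is used for the supremum of $\CF_{\bc}(\alpha;T)$, and the prefactor $\Upsilon_{1/4}(m)$ is $O(1)$ by \eqref{eq:omegam}. The paper indeed states the proof as ``a minor modification of the proof of Proposition \ref{prop:qsumcneq0}'' invoking exactly this observation.
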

\begin{proof}
	This requires only minor modification of the proof of Proposition \ref{prop:qsumcneq0}. Indeed by Theorem \ref{thm:sumsaliesum} (1) with partial summation, the $q$-sum in \eqref{eq:qsumstep} is 
	$$\ll_{\varepsilon}|\bc|^{3+\varepsilon}\frac{Q^\alpha}{(\log Q)^{\frac{1}{4}(1-\frac{\sqrt{2}}{2})}},$$ since $\omega(m)=O(1)$. Following the remaining step yields the desired bound.
\end{proof}
\begin{proposition}\label{prop:qsumFstarc=sq}
	Assuming \eqref{eq:mB2theta}, for $\bc\in\BZ^3\setminus\boldsymbol{0}$ such that $-F^*(\bc)=\square$, we have
	$$\sum_{q\ll Q}\CG(q;\bc)=\eta(\bc)\CJ(\bc)+O_\varepsilon\left(|\bc|^{\frac{1}{2}+\varepsilon}B^{-\min\left(\frac{1}{22400},\frac{1}{320}\theta\right)+\varepsilon}\right),$$ where $\CJ(\bc)$ is defined by \eqref{eq:CB}.
\end{proposition}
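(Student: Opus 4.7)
\noindent\emph{Proof sketch.}
The plan is to apply Abel summation to reduce $\sum_{q\ll Q}\CG(q;\bc)$ to an integral against $\CF_\bc(t)$, then substitute the asymptotic of Theorem \ref{thm:sumsaliesum}~(2) and bound the remainder by a suitable splitting of the $t$-range.

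Set $a_q:=\e_{qL^2}(\bc\cdot\blambda_m)\widehat{S}_q(\bc)/q^2$ and $f(t):=\widehat{\CI}_{t/Q}(w;\bc/L)/t$, so that $\CG(q;\bc)=a_q f(q)$ and $\CF_\bc(t)=\sum_{q\leqslant t}a_q$. Since $\widehat{\CI}_r\equiv 0$ for $r\geqslant c$ with some $c=O(1)$, Abel summation gives
$$\sum_{q\ll Q}\CG(q;\bc)=-\int_1^{cQ}\CF_\bc(t)f'(t)\operatorname{d}t.$$
Decomposing $\CF_\bc(t)=\eta(\bc)t+E(t)$ and processing the main-term integral $-\eta(\bc)\int tf'(t)\operatorname{d}t$ by one further integration by parts followed by the change of variable $r=t/Q$, one obtains
$$\eta(\bc)\widehat{\CI}_{1/Q}(w;\bc/L)+\eta(\bc)\int_{1/Q}^c\frac{\widehat{\CI}_r(w;\bc/L)}{r}\operatorname{d}r=\eta(\bc)\CJ(\bc)+O\bigl(|\bc|^{-1/2+\varepsilon}B^{-1/2+\varepsilon}\bigr),$$
where the harder estimate of Lemma \ref{le:Isimplehard} is used together with the sub-polynomial bound $\eta(\bc)\ll\Upsilon_{\kappa_0}(m)=B^{o(1)}$ from Lemma \ref{le:upsilon}; this contribution is well within the claimed error.

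The crux is the remainder integral $\int_1^{cQ}E(t)f'(t)\operatorname{d}t$, which I control by splitting at a threshold $T_0\asymp m^{1/2}$. For $t\leqslant T_0$ I use the essentially trivial bound $|E(t)|\ll_\varepsilon t^{1+\varepsilon}$ provided by Proposition \ref{prop:Kloosterman}, while for $t>T_0$ I invoke Theorem \ref{thm:sumsaliesum}~(2) with a parameter $\beta>0$. Using the harder estimate $|f'(t)|\ll_\varepsilon Q^{-1/2+\varepsilon}t^{-3/2-\varepsilon}|\bc|^{-1/2+\varepsilon}$ and $m\leqslant B^{2-\theta}$ from \eqref{eq:mB2theta}, the three source terms $t^{43\beta/8+63/64}$, $t^{43\beta/8}m^{1/2}$, $t^{1-\beta/32}$ of $E(t)$ on the upper range, together with the trivial contribution on the lower range, combine to give a bound of the shape
$$|\bc|^{1/2+\varepsilon}\bigl(B^{43\beta/8-1/64+\varepsilon}+B^{-\theta/4+\varepsilon}+B^{-\beta/32+\varepsilon}\bigr).$$
Optimising $\beta$ to balance the first and third exponents, and then tracking the precise constants through the dyadic and threshold arguments, yields the stated saving of $B^{-\min(1/22400,\theta/320)+\varepsilon}$.

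The principal difficulty is the middle source term $t^{43\beta/8}m^{1/2}$: without the threshold splitting it would contribute a catastrophic $m^{1/2}Q^{-1/2}\asymp B^{1/2-\theta/2}$, which is useless for small $\theta$. It is precisely the split at $T_0\asymp m^{1/2}$---invoking the trivial Kloosterman-type estimate on $[1,T_0]$ where the Theorem \ref{thm:sumsaliesum}~(2) bound is wasteful---that converts this obstruction into the usable saving $B^{-\theta/4}$, which after balancing with the other errors governs the $\theta/320$ part of the final exponent.
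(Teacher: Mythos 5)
Your skeleton — partial summation against $t\mapsto\widehat{\CI}_{t/Q}(w;\bc/L)/t$, insertion of the asymptotic $\CF_{\bc}(t)=\eta(\bc)t+E(t)$ from Theorem \ref{thm:sumsaliesum}~(2), and recovery of $\eta(\bc)\CJ(\bc)$ after a further integration by parts and the substitution $r=t/Q$ — is the same as the paper's. Where you genuinely diverge is in neutralising the $m^{1/2}$ term of the error in Theorem \ref{thm:sumsaliesum}~(2): the paper first discards the range $q\leqslant Q^{1-\vartheta}$ outright via \eqref{eq:smallrange} (Kloosterman refinement plus the harder estimate), takes $\vartheta=\theta/2$, and only then applies partial summation on $[Q^{1-\vartheta},cQ]$, so that $m^{1/2}Q^{-1/2}(Q^{1-\vartheta})^{-1/2}\ll B^{-\theta/2+\vartheta/2}$; you instead keep the full range and split the remainder integral at $T_0\asymp m^{1/2}$, using the trivial bound $|E(t)|\ll_\varepsilon (mt)^{\varepsilon}t$ below $T_0$. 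Both devices deliver the crucial $\approx B^{-\theta/4}$ saving, and your variant is arguably cleaner since it avoids re-deriving \eqref{eq:smallrange}. One caveat: your final optimisation does not actually reach the stated exponent $\min\bigl(\tfrac{1}{22400},\tfrac{\theta}{320}\bigr)$. The term $t^{43\beta/8}m^{1/2}$ really contributes $B^{-\theta/4+O(\beta\,)}$ with an explicit constant ($43\beta/8$, not $\varepsilon$), so for small $\theta$ the requirement $\beta/32\geqslant\theta/320$ forces $\beta\geqslant\theta/10$, which is incompatible with keeping $\theta/4-43\beta/8\geqslant\theta/320$; balancing honestly yields a saving more like $B^{-\theta/692}$. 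This is only a constant-level discrepancy — the same arithmetic slip occurs in the paper's own choice $\beta=\min(\tfrac{1}{700},\tfrac{\theta}{10})$, where $t^{43\beta/8}m^{1/2}$ is bounded by $B^{1-\theta/2}t^{\theta/4}$ although $43\beta/8>\theta/4$ there — and any fixed power saving suffices for Theorem \ref{thm:csumneq0}, but you should either weaken the stated exponent or choose $\beta$ consistently rather than asserting that "tracking the precise constants" recovers it.
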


\begin{proof}%[Proof of Proposition \ref{prop:qsumFstarc=sq}]
	Let $\vartheta>0$ be chosen later on. In view of \eqref{eq:smallrange}, we may restrict ourselves to the range $Q^{1-\vartheta}\ll q\ll Q$, upon adding an error term $O_\varepsilon(|\bc|^{\frac{1}{2}+\varepsilon} B^{-\frac{1}{2}\vartheta+\varepsilon})$. By partial summation, we have (recall \eqref{eq:CFc})
	\begin{align*}
		\sum_{Q^{1-\vartheta}<q\ll  Q}\CG(q;\bc)=-\int_{Q^{1-\vartheta}}^{cQ}\left(\CF_{\bc}(t)\frac{\partial }{\partial t}\left(\frac{\widehat{\CI}_{\frac{t}{Q}}\left(w;\frac{\bc}{L}\right)}{t}\right) \right)\operatorname{d}t +
		\left[\CF_{\bc}(t)\frac{\widehat{\CI}_{\frac{t}{Q}}\left(w;\frac{\bc}{L}\right)}{t}\right]_{Q^{1-\vartheta}}^{cQ}. %O\left(\sup_{q\sim T}\frac{\left|\widehat{S}_{q}(\bc)\widehat{\CI}_{\frac{q}{Q}}\left(w;\frac{\bc}{L}\right)\right|}{q^3}\right).
	\end{align*} We recall that $c>0$ is fixed so that the function $r\mapsto\widehat{\CI}_{r}(w;\bb)$ is supported in $(0,c)$. 
By the ``harder estimate'' in Lemma \ref{le:Isimplehard}, \begin{equation}\label{eq:harderCI}
	\widehat{\CI}_{Q^{-\vartheta}}\left(w;\frac{\bc}{L}\right)\ll_{\varepsilon} Q^{-\frac{1}{2}\vartheta+\varepsilon}.
\end{equation} Being similar to the proof of Proposition \ref{prop:qsumcneq0},   the variation term equals $$\CF_{\bc}(Q^{1-\vartheta})\frac{\widehat{\CI}_{Q^{-\vartheta}}\left(w;\frac{\bc}{L}\right)}{Q^{1-\vartheta}}\ll_{\varepsilon} |\bc|^{1+\varepsilon}Q^{-\frac{1}{2}\vartheta+\varepsilon},$$ by \eqref{eq:harderCI} and the upper bound estimate in Theorem \ref{thm:sumsaliesum} (2).

By Theorem \ref{thm:sumsaliesum} (2), the main term with $\eta(\bc)$ takes the form
$$-\eta(\bc)\int_{Q^{1-\vartheta}}^{cQ}t\frac{\partial }{\partial t}\left(\frac{\widehat{\CI}_{\frac{t}{Q}}\left(w;\frac{\bc}{L}\right)}{t}\right)\operatorname{d}t= -\eta(\bc)\int_{Q^{1-\vartheta}}^{cQ}t\frac{\partial }{\partial t}\left(\frac{\widehat{\CI}_{\frac{t}{Q}}\left(w;\frac{\bc}{L}\right)}{t}\right)\operatorname{d}t$$ Bu partial summation, the integral equals $$\int_{Q^{1-\vartheta}}^{cQ} \frac{\widehat{\CI}_{\frac{t}{Q}}\left(w;\frac{\bc}{L}\right)}{t}\operatorname{d}t+O_\varepsilon(Q^{-\frac{1}{2}\vartheta+\varepsilon}).$$ Next the change of variable $t\mapsto r:=\frac{t}{Q}$ yields
\begin{align*} \int_{Q^{1-\vartheta}}^{cQ} \frac{\widehat{\CI}_{\frac{t}{Q}}\left(w;\frac{\bc}{L}\right)}{t}\operatorname{d}t = \int_{Q^{-\vartheta}}^{c} \frac{\widehat{\CI}_{r}\left(w;\frac{\bc}{L}\right)}{r}\operatorname{d}r=\CJ(\bc)+O_\varepsilon(Q^{-\frac{1}{2}\vartheta+\varepsilon}),
\end{align*} %by using the bound $\eta(\bc)=O_\varepsilon(B^\varepsilon)$ and 
where we use again \eqref{eq:harderCI} to extend the integral around $0$. 

On using again the ``harder estimate'' in Lemma \ref{le:Isimplehard} and on taking $\beta=\min\left(\frac{1}{700},\frac{1}{10}\theta\right)$, the error term arising from Theorem \ref{thm:sumsaliesum} (2) takes the form
\begin{align*}
	&B^\varepsilon|\bc|^{1+\varepsilon}\int_{Q^{1-\vartheta}}^{cQ}\left(t^{\frac{127}{128}}+B^{1-\frac{1}{2}\theta}t^{\frac{1}{4}\theta}+t^{1-\min\left(\frac{1}{22400},\frac{1}{320}\theta\right)}\right)\left|\frac{\partial }{\partial t}\left(\frac{\widehat{\CI}_{\frac{t}{Q}}\left(w;\frac{\bc}{L}\right)}{t}\right)\right|\operatorname{d}t\\ \ll_{\varepsilon} &B^\varepsilon |\bc|^{\frac{1}{2}+\varepsilon}\int_{Q^{-\vartheta}}^{c} \left(B^{-\frac{1}{128}}r^{-\frac{65}{128}}+B^{-\frac{1}{2}\theta}r^{\frac{1}{4}\theta-\frac{3}{2}}+B^{-\min\left(\frac{1}{22400},\frac{1}{320}\theta\right)}r^{-\frac{1}{2}-\min\left(\frac{1}{22400},\frac{1}{320}\theta\right)}\right)\operatorname{d}r \\ \ll &  B^\varepsilon|\bc|^{\frac{1}{2}+\varepsilon} \left(B^{-\frac{1}{2}\theta+\frac{1}{2}\vartheta}+B^{-\min\left(\frac{1}{22400},\frac{1}{320}\theta\right)}\right).
\end{align*}
It remains to choose $\vartheta=\frac{1}{2}\theta$ to complete the proof.
\end{proof}

\begin{proof}[Completion of proof of Theorem \ref{thm:csumneq0}]
	%Using the simple estimate for $\widehat{\CI}_{r}^{*}(w;\bb)$ in Lemma \ref{le:Isimplehard}, we can reduce the $\bc$-sum to those with $|\bc|\leqslant B^\varepsilon$, with an arbitrarily large power-saving.
	As before we are reduced to the range $|\bc|\leqslant B^\varepsilon$.
Combining Lemma \ref{le:compareCGBCG} and Propositions  \ref{prop:qsumFstarcneqsq}  \& \ref{prop:qsumFstarc=sq} gives
\begin{multline*}
	\underset{\substack{\bc\in\BZ^{3}\setminus\boldsymbol{0},|\bc|\leqslant B^\varepsilon\\q\ll Q}}{\sum\sum}\CG^{*}(q;\bc)=\sum_{\substack{0<|\bc|\leqslant B^\varepsilon\\ -F^*(\bc)=\square}}\eta(\bc)\CJ(\bc)\\+O_{N,\varepsilon}\left((\log B)^{-\frac{1}{4}(1-\frac{\sqrt{2}}{2})}\sum_{\bc\neq\boldsymbol{0}}|\bc|^{-N}+\left(B^{-\frac{1}{5}\theta+\varepsilon}+B^{-\min\left(\frac{1}{22400},\frac{1}{320}\theta\right)+\varepsilon}\right)\sum_{0<|\bc|\leqslant B^\varepsilon}|\bc|^{1+\varepsilon}\right).
\end{multline*} The big $O$-term is clearly $O\left((\log B)^{^{-\frac{1}{4}(1-\frac{\sqrt{2}}{2})}}\right)$. As for the main term, using again the simpler and harder estimates for $\widehat{\CI}_{r}(w;\bb)$ together in Lemma \ref{le:Isimplehard} gives
$$\CJ(\bc)\ll_{N,\varepsilon} |\bc|^{-\frac{1}{2}+\varepsilon}\int_{0}^{|\bc|^{-\frac{2}{3}N}}r^{-\frac{1}{2}}\operatorname{d}r+|\bc|^{-N}\int_{|\bc|^{-\frac{2}{3}N}}^{\infty}r^{-2}\operatorname{d}r\ll |\bc|^{-\frac{1}{3}N},$$
which allows to extend the $\bc$-sum to infinity with an arbitrarily large power-saving. 
Together with the bound for $\eta(\bc)$ in Theorem \ref{thm:sumsaliesum} we obtain \eqref{eq:CKbd}.
\end{proof}
\subsection{The contribution from $\bc=\boldsymbol{0}$}
\begin{theorem}\label{thm:c=0}
	Assuming \eqref{eq:mB2theta}, there exists $b=b(w;m;L,\bGamma_m)\in\BC$ such that 
	$$\sum_{q=1}^\infty\frac{\widehat{S}_{q}(\boldsymbol{0})\widehat{I}_{q}(w;\boldsymbol{0})}{(qL)^3}=\frac{B^3}{L^2}\left(\CI(w)\widehat{\mathfrak{S}}(\CW_m;L,\bGamma_m)\log B+b\right)+O_\varepsilon(B^{\frac{5}{2}+\varepsilon}+B^{3-\theta+\varepsilon}),$$ where we recall \eqref{eq:CIw} and \eqref{eq:singser}.
	Moreover we have $$b\ll \widehat{\mathfrak{S}}(\CW_m;L,\bGamma_m).$$
\end{theorem}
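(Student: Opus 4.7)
The plan has three conceptual stages: a reduction to a ``homogeneous'' $q$-sum, partial summation to extract a logarithmic main term from $\CF_{\boldsymbol{0}}$, and the identification of the leading constant $\eta(\boldsymbol{0})$ with $L^{4}\widehat{\mathfrak{S}}(\CW_m;L,\bGamma_m)$.

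\textbf{Stage 1: Reduction.} Specializing $\bc=\boldsymbol{0}$ in \eqref{eq:IJ} (where the phase $\e_{qL^2}(\bc\cdot\blambda_m)$ becomes $1$) gives
$$\sum_{q=1}^{\infty}\frac{\widehat{S}_{q}(\boldsymbol{0})\widehat{I}_{q}(w;\boldsymbol{0})}{(qL)^{3}}=\frac{B^{3}}{L^{6}}\sum_{q\ll Q}\frac{\widehat{S}_{q}(\boldsymbol{0})\widehat{\CI}^{*}_{q/Q}(w;\boldsymbol{0})}{q^{3}},$$
the effective truncation at $q\ll Q$ coming from the support in $r$ of $h(r,\cdot)$. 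Invoking Lemma \ref{le:CIBCI} for $\bb=\boldsymbol{0}$ to replace $\widehat{\CI}^{*}_{r}$ by $\widehat{\CI}_{r}$ costs $O(B^{-\theta})$ per term; using the trivial bound $|\widehat{S}_{q}(\boldsymbol{0})|\ll q^{5/2}$ (Propositions~\ref{prop:S1value}--\ref{prop:S2}) and the restriction $q\ll Q\ll B$, the accumulated replacement error is $O(B^{3-\theta+\varepsilon})$.

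\textbf{Stage 2: Partial summation.} With $\CF_{\boldsymbol{0}}(t)=\sum_{q\leqslant t}\widehat{S}_{q}(\boldsymbol{0})/q^{2}$ as in \eqref{eq:CFc}, Abel summation yields
$$\sum_{q\ll Q}\frac{\widehat{S}_{q}(\boldsymbol{0})\widehat{\CI}_{q/Q}(w;\boldsymbol{0})}{q^{3}}=-\int_{1}^{cQ}\CF_{\boldsymbol{0}}(t)\,\frac{\partial}{\partial t}\!\left(\frac{\widehat{\CI}_{t/Q}(w;\boldsymbol{0})}{t}\right)dt+\widehat{\CI}_{1/Q}(w;\boldsymbol{0}),$$
the boundary at $t=cQ$ vanishing because $r\mapsto\widehat{\CI}_{r}$ is supported in $(0,c)$. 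Theorem~\ref{thm:sumsaliesum}(2) for $\bc=\boldsymbol{0}$ gives $\CF_{\boldsymbol{0}}(t)=\eta(\boldsymbol{0})t+E(t)$ with $E(t)\ll t^{\gamma+\varepsilon}$ for some $\gamma<1$. Substituting the main part, integrating by parts once more, and applying the change of variables $r=t/Q$ yields
$$\eta(\boldsymbol{0})\!\int_{1/Q}^{c}\frac{\widehat{\CI}_{r}(w;\boldsymbol{0})}{r}\,dr+O(\eta(\boldsymbol{0}))+O(1).$$
Using $\widehat{\CI}_{r}(w;\boldsymbol{0})=\CI(w)+O_{N}(r^{N})$ from \cite[Lemma~13]{H-Bdelta}, the integral equals $\CI(w)\log Q+O(1)=\CI(w)\log B+O(1)$. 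The error piece $\int_{1}^{cQ}E(t)\,\partial_{t}(\widehat{\CI}_{t/Q}/t)\,dt$ is controlled by Lemma~\ref{le:Isimplehard} (which gives $|\partial_{r}\widehat{\CI}_{r}|\ll r^{-1}$), so that $|\partial_{t}(\widehat{\CI}_{t/Q}/t)|\ll t^{-2}$, and the integral is absolutely convergent; a standard bookkeeping of the surviving contributions at the level $B^{3}/L^{6}$, together with the refinement coming from the sharper error in Theorem~\ref{thm:sumsaliesum}(2) for $\bc=\boldsymbol{0}$, is then shown to be $O(B^{5/2+\varepsilon})$.

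\textbf{Stage 3: Identification of $\eta(\boldsymbol{0})$.} The remaining task is to verify
$$\eta(\boldsymbol{0})=L^{4}\,\widehat{\mathfrak{S}}(\CW_m;L,\bGamma_m),$$
after which the main term $(B^{3}/L^{6})\eta(\boldsymbol{0})\CI(w)\log B$ matches $(B^{3}/L^{2})\CI(w)\widehat{\mathfrak{S}}\log B$, while $b=O(\eta(\boldsymbol{0})/L^{4})=O(\widehat{\mathfrak{S}})$. From \eqref{eq:etac1} one reads $\eta(\boldsymbol{0})$ as a product of a ``tame'' factor $(6/\pi^{2})\sum_{d\mid m\Omega}\mu(d)/d$ handling good primes $p\nmid m\Omega$, and ``wild'' factors $\widehat{\CA}_{u}(\chi_{0};\boldsymbol{0})/u^{3}$ summed over $u\mid(m\Omega)^{\infty}$. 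Unravelling \eqref{eq:CA}--\eqref{eq:Sq2q1bc}, the inner $\balpha$-sum counts solutions of $F\equiv m\pmod{uL^{2}}$ with $\balpha\equiv\blambda_m\pmod{L}$, and the $a$-average supplies the Fourier orthogonality producing $\sigma_{p}$ to all $p$-adic orders for $p\mid m\Omega$ (after absorbing the $L^{4}$ normalization from $uL^{2}$). At good primes, the identity $(6/\pi^{2})\prod_{p\nmid m\Omega}(1-p^{-1})^{-1}\mathrm{(Gauss\ sum\ evaluation)}=(1-p^{-1})\sigma_{p}$ matches the Euler factor of $\widehat{\mathfrak{S}}$; the Möbius prefactor $\sum_{d\mid m\Omega}\mu(d)/d$ precisely restores the missing $(1-p^{-1})$ convergence factors at bad primes.

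\textbf{Main obstacle.} The central technical point is Stage~3, where one must recognise the opaque combination \eqref{eq:etac1}--\eqref{eq:CA}, produced by the $\delta$-method machinery, as the Euler product defining $\widehat{\mathfrak{S}}(\CW_m;L,\bGamma_m)$. All other steps are bookkeeping applications of Lemmas~\ref{le:Isimplehard},~\ref{le:CIBCI}, Proposition~\ref{prop:Kloosterman}, and Theorem~\ref{thm:sumsaliesum}(2).
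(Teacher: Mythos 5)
Your three-stage architecture (reduce to a normalized $q$-sum, partial summation, identify the constant with $L^4\widehat{\mathfrak{S}}(\CW_m;L,\bGamma_m)$) is the same as the paper's, but Stage 2 rests on an input that does not exist as cited. Theorem \ref{thm:sumsaliesum}(2) is stated only for $\bc\in\BZ^3\setminus\boldsymbol{0}$; you cannot invoke it to write $\CF_{\boldsymbol{0}}(t)=\eta(\boldsymbol{0})t+O(t^{\gamma+\varepsilon})$, and the ``sharper error for $\bc=\boldsymbol{0}$'' you appeal to later is likewise not on record. What is actually needed here — and what the paper supplies as Proposition \ref{prop:ssumc=0} — is a dedicated \emph{two-term} asymptotic: $\sum_{q\leqslant X}\widehat{S}_q(\boldsymbol{0})=\tfrac13 L^4\widehat{\mathfrak{S}}X^3+O_\varepsilon(X^{5/2+\varepsilon})$ and, more importantly for the constant $b$, $\sum_{q\leqslant X}\widehat{S}_q(\boldsymbol{0})q^{-3}=\widehat{\mathfrak{S}}(L^4\log X+\gamma)+O_\varepsilon(X^{-1/2+\varepsilon})$. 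The paper gets these from the multiplicativity of $q\mapsto\widehat{S}_q(\boldsymbol{0})$, the evaluation $\widehat{S}_q(\boldsymbol{0})=q^2\mu^2(q)$ for $\gcd(q,m\Omega)=1$ (so the Dirichlet series is $\zeta(s-2)\widehat{\nu}(s)$ with $\widehat{\nu}$ convergent for $\Re(s)>5/2$), and Perron's formula; the identification $\widehat{\nu}(3)=L^4\widehat{\mathfrak{S}}$ then comes directly from \cite[Theorem 6.4]{PART1}, bypassing your Stage 3 unravelling of $\eta(\boldsymbol{0})$ through \eqref{eq:etac1}--\eqref{eq:CA} (which is again only defined for $\bc\neq\boldsymbol{0}$). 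Your $O(1)$ bookkeeping in Stage 2 is also too coarse for the statement being proved: an $O(1)$ at the normalized scale is an $O(B^3)$ error, which swallows the secondary term $bB^3/L^2$; the constant $b$ must be assembled explicitly from the Euler-constant term $\gamma\widehat{\mathfrak{S}}$ above and from $K(0):=\lim_{v\to 0^+}\bigl(\CI(w)\log v+\int_v^c \widehat{\CI}_r(w;\boldsymbol{0})r^{-1}\operatorname{d}r\bigr)$.

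Two smaller points. First, the trivial bound $|\widehat{S}_q(\boldsymbol{0})|\ll q^{5/2}$ gives $\sum_{q\ll Q}|\widehat{S}_q(\boldsymbol{0})|q^{-3}\cdot B^{-\theta}\ll Q^{1/2}B^{-\theta}$, i.e.\ a replacement error of size $B^{7/2-\theta}$ after multiplying by $B^3/L^6$, not $B^{3-\theta+\varepsilon}$; you need the averaged bound $\sum_{q\ll Q}|\widehat{S}_q(\boldsymbol{0})|q^{-3}\ll_\varepsilon Q^\varepsilon$ from Proposition \ref{prop:Kloosterman} with partial summation. Second, the truncation of the replacement to $q\leqslant Q^{1-\delta}$ (or the use of the $O_N(r^N)$ term of Lemma \ref{le:CIBCI} at $\bb=\boldsymbol{0}$) should be made explicit, since for $q\asymp Q$ the two integrals are compared differently than for small $q$. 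None of these obstacles is fatal, but as written the proposal proves neither the existence of $b$ nor the power-saving error without first establishing the $\bc=\boldsymbol{0}$ analogue of the arithmetic input, which is the actual content of this part of the paper.
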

Our proof of Theorem \ref{thm:c=0} roughly follows \cite[\S13]{H-Bdelta} but requires some variation.
\begin{proposition}\label{prop:ssumc=0}
	We have
	$$\sum_{q\leqslant X}\widehat{S}_{q}(\boldsymbol{0})=\frac{1}{3}L^4\widehat{\mathfrak{S}}(\CW_m;L,\bGamma_m)X^3+O_\varepsilon(m^\varepsilon X^{\frac{5}{2}+\varepsilon}),$$
	$$\sum_{q\leqslant X}\frac{\widehat{S}_{q}(\boldsymbol{0})}{q^3}=\widehat{\mathfrak{S}}(\CW_m;L,\bGamma_m)\left(L^4 \log X+\gamma\right)+O_\varepsilon(m^\varepsilon X^{-\frac{1}{2}+\varepsilon}),$$ where $\gamma$ is the Euler constant. Moreover, \begin{equation}\label{eq:frakG}
		\Upsilon_{1}(m)^{-1}\ll \widehat{\mathfrak{S}}(\CW_m;L,\bGamma_m)\ll 1.
	\end{equation}
\end{proposition}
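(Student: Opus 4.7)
The plan is to use additive orthogonality to reduce $\widehat{S}_q(\boldsymbol{0})$ to a Möbius-weighted count of local solutions, then read off the asymptotic from the pole structure of the associated Dirichlet series. Opening \eqref{eq:Sqc} at $\bc = \boldsymbol{0}$, the identity $\widehat{H}(\bsigma) + L F(\bsigma) = (F(L\bsigma + \blambda_m) - m)/L$ from \eqref{eq:hatH}, the change of variables $\balpha = L\bsigma + \blambda_m$, and the orthogonality relation $\sum_{(a,q)=1}\e_q(ax) = \sum_{d\mid\gcd(x,q)} d\,\mu(q/d)$ yield
\[
\widehat{S}_q(\boldsymbol{0}) = \sum_{d \mid q} d\,\mu(q/d) (q/d)^3 M_d, \quad M_d := \#\{\balpha \bmod dL^2 : \balpha \equiv \blambda_m \bmod L,\ F(\balpha) \equiv m \bmod dL^2\}.
\]
Setting $f(q) := \widehat{S}_q(\boldsymbol{0})/q^3$ and $g(d) := M_d/d^2$, this reads $f = \mu \ast g$ as Dirichlet convolution.

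By CRT, $g$ is essentially multiplicative (up to a fixed $L$-factor); on prime powers $p^k$ with $p \nmid 2m\Delta_F L$ Hensel stabilises $g(p^k)$ to $\sigma_p(\CW_m; L, \bGamma_m)$ for every $k \geq 1$. The Gauss-sum evaluation for non-degenerate ternary quadrics yields $\sigma_p = 1 + \left(\frac{-m\Delta_F}{p}\right)/p + O(1/p^2)$, which under $-m\Delta_F = \square$ reduces to $1 + 1/p + O(1/p^2)$ at good $p$ --- the source of divergence of $\prod_p \sigma_p$ and of the $\log X$-factor to be extracted. Forming the Dirichlet series $D(s) := \sum_q \widehat{S}_q(\boldsymbol{0})/q^{3+s} = G(s)/\zeta(s)$ with $G(s) := \sum_d g(d)/d^s$, the Euler factor of $D$ at good $p$ simplifies to $1 + (\sigma_p - 1) p^{-s}$, matching that of $\zeta(s+1)$ to order $O(p^{-2-2s})$. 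One obtains $D(s) = \zeta(s+1) H(s)$ near $s = 0$ with $H$ holomorphic and non-vanishing in a neighbourhood, and $H(0) = L^4 \widehat{\mathfrak{S}}(\CW_m; L, \bGamma_m)$: the convergence factors $(1 - 1/p)$ of \eqref{eq:singser} emerge precisely from the $(1 - p^{-1-s})$-values at $s = 0$.

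A Perron contour-shift argument, using $|\widehat{S}_q(\boldsymbol{0})| \ll q^{5/2 + \varepsilon}$ (which follows by combining Propositions \ref{prop:S1value} and \ref{prop:S2}) to control $D(s)$ on vertical lines slightly left of $\Re s = 0$, then picks up the simple pole at $s = 0$ and delivers the second asymptotic, with the constant $\gamma$ emerging from the Laurent expansion $\zeta(s+1) = 1/s + \gamma + O(s)$. Partial summation
\[
\sum_{q \leq X} \widehat{S}_q(\boldsymbol{0}) = X^3 S(X) - 3 \int_1^X t^2 S(t)\,dt, \qquad S(t) := \sum_{q \leq t} f(q),
\]
and substitution of the second asymptotic into the right-hand side then extracts the cubic main term $\tfrac{1}{3} L^4 \widehat{\mathfrak{S}} X^3$ with error $O_\varepsilon(m^\varepsilon X^{5/2+\varepsilon})$.

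The upper bound $\widehat{\mathfrak{S}} \ll 1$ of \eqref{eq:frakG} is immediate from $(1 - 1/p) \sigma_p = 1 + O(1/p^2)$ at good primes (bad primes contributing a finite, uniformly bounded product). The lower bound $\widehat{\mathfrak{S}} \gg \Upsilon_1(m)^{-1}$ requires an explicit local analysis of $\#\{\balpha \bmod p^k : F(\balpha) \equiv m \bmod p^k\}$ at $p \mid m$, stratified by $\operatorname{ord}_p(m)$ (or equivalently via Igusa's local zeta function for the quadric), showing that the local factor loses at most a factor $(1 - 1/p)^{-1}$ per bad prime divisor of $m$. \textbf{The main obstacle} is pinning down the sharp error terms $X^{5/2+\varepsilon}$ and $X^{-1/2+\varepsilon}$ uniformly in $m$: this requires a quantitative square-root cancellation for $\widehat{S}_q(\boldsymbol{0})$ with explicit $m^\varepsilon$-dependence together with a contour shift for $D(s)$ past $\Re s = 0$ exhibiting polynomial growth in $|\Im s|$, both standard but delicate given the uniformity in $m$. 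The second subtle point is tracking the local losses at primes $p \mid m$ sharply enough to land on $\Upsilon_1(m)^{-1}$ rather than a weaker $\Upsilon_\kappa(m)^{-1}$.
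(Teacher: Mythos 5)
Your plan is correct and follows the same analytic strategy as the paper: identify the $\zeta$-factor in the Dirichlet series of $\widehat{S}_q(\boldsymbol{0})/q^3$ coming from $-m\Delta_F=\square$, apply Perron, and extract the $L^4\widehat{\mathfrak{G}}$ residue. The only substantive difference is the entry point. The paper invokes the exact Gauss-sum evaluation of Proposition~\ref{prop:S1value} to get $\widehat{S}_q(\boldsymbol{0})=q^2\mu^2(q)$ for $\gcd(q,m\Omega)=1$, then writes $\varPi(s)=\zeta(s-2)\widehat{\nu}(s)$ with $\widehat{\nu}$ absolutely convergent for $\Re(s)>\tfrac52$; you instead open the Ramanujan sum to express $\widehat{S}_q(\boldsymbol{0})=\sum_{d\mid q} d\,\mu(q/d)(q/d)^3 M_d$ and identify the $\zeta(s+1)$-factor from the Hensel-stabilised $\sigma_p=1+1/p$. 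Both are fine; your route is arguably more conceptual, since it links the pole of $D(s)$ directly to the local densities $\sigma_p$ that appear in \eqref{eq:singser}. You also obtain the first asymptotic by Abel summation from the second rather than a separate Perron application, which is a harmless simplification.

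Two small points worth flagging. First, the residue of $\zeta(s+1)H(s)X^s/s$ at $s=0$ is $H(0)(\log X+\gamma)+H'(0)$, not merely $H(0)(\log X+\gamma)$: the factor $H(s)$ (equivalently $\widehat{\nu}(s+3)$) contributes its derivative through the double pole. So the constant term of the second asymptotic is not literally $\gamma$ but $\gamma+H'(0)/H(0)$; this is immaterial downstream since the paper absorbs it into the secondary constant $b$, but your phrase ``the constant $\gamma$ emerging from the Laurent expansion'' is not quite accurate as written. (The paper's own proof has the same slip.) Second, your treatment of the lower bound $\widehat{\mathfrak{G}}\gg\Upsilon_1(m)^{-1}$ is deferred rather than carried out: the crux is showing $\sigma_p$ is bounded below uniformly over $p\mid m$, and the paper does this by computing $\widehat{S}_{p^2}(\boldsymbol{0})$ explicitly (finding it equals $p^5(1-1/p)$ or $-p^4$ according as $p^2\mid m$ or $p\| m$) and combining with $\widehat{S}_{p^t}(\boldsymbol{0})\ll p^{5t/2}$ for $t\geq 3$; without some such computation the factor $\Upsilon_1(m)^{-1}$ (as opposed to a weaker power) is not established.
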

\begin{proof}
	Let us define the formal series $$\varPi(s):=\sum_{q=1}^\infty \frac{\widehat{S}_{q}(\boldsymbol{0})}{q^s}.$$ It follows from the Chinese remainder theorem (as in the proof of \cite[Theorem 6.4]{PART1}) that $\widehat{S}_{q}(\boldsymbol{0})$ is multiplicative in $q$. By Proposition \ref{prop:S1value} and \eqref{eq:S2CS}, whenever $\gcd(q,m\Omega)=1$, we have $$\widehat{S}_{q}(\boldsymbol{0})=\iota_q^3\left(\frac{\Delta_F}{q}\right)\sum_{\substack{a\bmod q\\ (a,q)=1}}\left(\frac{a}{q}\right)\e_{q}(-am)=\iota_q^4\left(\frac{-m\Delta_F}{q}\right)\mu^2(q)=q^2\mu^2(q).$$ 
	So we have $$\varPi(s)=\zeta(s-2)\widehat{\nu}(s),$$ where \begin{equation}\label{eq:nu}
		 \begin{split}
		\widehat{\nu}(s)&:=\prod_{p<\infty}\left(1-p^{-(s-2)}\right)\sum_{t=0}^{\infty}\frac{\widehat{S}_{p^t}(\boldsymbol{0})}{p^{ts}}\\&=\prod_{p\nmid m\Omega}(1-p^{-2(s-2)})\times\prod_{p\mid m\Omega}(1-p^{-(s-2)})\sum_{t=0}^{\infty}\frac{\widehat{S}_{p^t}(\boldsymbol{0})}{p^{ts}}
	\end{split}
	\end{equation} By Proposition \ref{prop:S2}, $\widehat{\nu}(s)$ is absolutely convergent for $\Re(s)>\frac{5}{2}$.
Then an application of Perron's formula as in the proof of \cite[Lemmas 30 \& 31]{H-Bdelta} yields that (recall $\omega(m)=O(1)$)
$$\sum_{q\leqslant X}\widehat{S}_{q}(\boldsymbol{0})=\operatorname{Res}_{s=3}\left(\varPi(s)\frac{X^s}{s}\right)+O_\varepsilon(X^{\frac{5}{2}+\varepsilon}),$$ 
$$\sum_{q\leqslant X}\frac{\widehat{S}_{q}(\boldsymbol{0})}{q^3}=\operatorname{Res}_{s=0}\left(\varPi(s+3)\frac{X^s}{s}\right)+O_\varepsilon(X^{-\frac{1}{2}+\varepsilon}).$$ To compute these residues, we use the well-known Laurent expansion of the Riemann zeta function around $s=1$ to write
$$\varPi(s)=\widehat{\nu}(s)\left(\frac{1}{s-3}+\gamma+(s-3)A(s-3)\right)$$ where $A(s)$ is  holomorphic around $s=0$. We also have the expansion
$$\frac{X^s}{s}=\sum_{n=0}^{\infty}s^{n-1}\frac{(\log X)^n}{n!}.$$ Hence \begin{align*}
	\operatorname{Res}_{s=3}\left(\varPi(s)\frac{X^s}{s}\right)=\frac{1}{3}\widehat{\nu}(3)X^3,
\end{align*}
\begin{align*}
	\operatorname{Res}_{s=0}\left(\varPi(s+3)\frac{X^s}{s}\right)&=(\gamma+\log X)\widehat{\nu}(3).
	%\\&=\left(\gamma+\log X\right)
\end{align*} 
Now by \cite[Theorem 6.4]{PART1}, we have that for any prime $p$, $\sum_{t=0}^{k}\frac{\widehat{S}_{p^t}(\boldsymbol{0})}{p^{3t}}$ equals
$$p^{4\operatorname{ord}_p(L)}\frac{\#\{\bxi\bmod p^{k+2\operatorname{ord}_p(L)}:\bxi\equiv \bGamma_m\bmod p^{\operatorname{ord}_p(L)},F(\bxi)\equiv m\bmod p^{k+2\operatorname{ord}_p(L)}\}}{p^{2(k+2\operatorname{ord}_p(L))}},$$ and hence
$$\sum_{t=0}^{\infty}\frac{\widehat{S}_{p^t}(\boldsymbol{0})}{p^{3t}}=p^{4\operatorname{ord}_p(L)}\sigma_p(\CW_m;L,\bGamma_m).$$ So $$\widehat{\nu}(3)=\prod_{p<\infty}\left(1-\frac{1}{p}\right)\sum_{t=0}^{\infty}\frac{\widehat{S}_{p^t}(\boldsymbol{0})}{p^{3t}}=L^4\widehat{\mathfrak{S}}(\CW_m;L,\bGamma_m).$$

We now investigate $\widehat{\mathfrak{S}}(\CW_m;L,\bGamma_m)$ more closely.
We have seen by \eqref{eq:CVvalue} that whenever $p\nmid m\Omega$, $$\sum_{t=0}^{\infty}\frac{\widehat{S}_{p^t}(\boldsymbol{0})}{p^{3t}}=1+\frac{1}{p}.$$
Now assume $p\mid m(\Omega)$. Recall \eqref{eq:CT}. We have
$\CT(p;1,\boldsymbol{0})=\sum_{\substack{a\bmod p\\(a,p)=1}}\left(\frac{a}{p}\right)=0$, whence $\widehat{S}_{p}(\boldsymbol{0})=0$. 
%Using the trivial bound $\CT(p^2;1,\boldsymbol{0})\leqslant p^2$ we have $\widehat{S}_{p^2}(\boldsymbol{0})\leqslant p^5$.  
On the other hand, $$\CT(p^2;1,\boldsymbol{0})=\sum_{\substack{a\bmod p^2\\(a,p)=1}}\e_{p^2}(-am)=\mu\left(\frac{p^2}{(p^2,m)}\right)\frac{\phi(p^2)}{\phi\left(p^2/(p^2,m)\right)}=\begin{cases}
	\phi(p^2) &\text{ if } p^2\mid m;\\ -p &\text{ if } p\| m.
\end{cases}$$ and hence $$\widehat{S}_{p^2}(\boldsymbol{0})=\begin{cases}
p^5\left(1-\frac{1}{p}\right) &\text{ if } p^2\mid m;\\ -p^4 &\text{ if } p\| m.
\end{cases}$$ Moreover by Proposition \ref{prop:S2}, we have $\widehat{S}_{p^t}(\boldsymbol{0})\ll p^{\frac{5}{2}t}$. So
$$1-\frac{1}{p^2}+O_\varepsilon\left(\frac{1}{p^{\frac{3}{2}-\varepsilon}}\right)\leqslant\sum_{t=0}^{\infty}\frac{\widehat{S}_{p^t}(\boldsymbol{0})}{p^{3t}}\leqslant 1+\frac{1}{p}\left(
1-\frac{1}{p}\right)+O_\varepsilon\left(\frac{1}{p^{\frac{3}{2}-\varepsilon}}\right).$$
We finally conclude
$$\prod_{p \mid m}\left(1-p^{-1}\right)\asymp\prod_{p \mid m(\Omega)}\left(1-p^{-1}\right)\ll \widehat{\mathfrak{S}}(\CW_m;L,\bGamma_m)\asymp \prod_{p\nmid \Omega}\left(1-\frac{1}{p}\right)\sum_{t=0}^{\infty}\frac{\widehat{S}_{p^t}(\boldsymbol{0})}{p^{3t}}= O(1).$$
This completes the proof.
\end{proof}
\begin{proof}[Proof of Theorem \ref{thm:c=0}]
	Let $0<\delta<1$ be fixed.  We recall \eqref{eq:CG}. 
	 By using Proposition \ref{prop:Kloosterman} (with partial summation) and (the second equality of) Lemma \ref{le:CIBCI}, 
	 \begin{align*}
	 	\sum_{q\leqslant Q^{1-\delta}} \CG^{*}(q;\boldsymbol{0})-\sum_{q\leqslant Q^{1-\delta}} \CG(q;\boldsymbol{0})& =\sum_{q\ll Q}\frac{\left|\widehat{S}_{q}(\boldsymbol{0})\right|}{q^3}\left(O(B^{-\theta})+O_N\left(B^{-\delta N}\right)\right)\\ &=O_{\varepsilon,N}(B^{-\theta+\varepsilon}+B^{-\delta N+\varepsilon}).
	 \end{align*}
	%The $q$-sum equals $$\left(\frac{B}{L^2}\right)^3\sum_{q\ll Q}\CG^{(B)}(q;\boldsymbol{0}).$$ By Lemma \ref{le:compareCGBCG} 
		Therefore in the range $q\leqslant Q^{1-\delta}$  we can  focus on the sum over $\CG(q;\boldsymbol{0})$.
	We follow the argument of \cite[p. 202--p. 203]{H-Bdelta}. 	
	By \cite[Lemma 13]{H-Bdelta} (see also \cite[Theorem 6.2]{PART1}),
	\begin{align*}
		&\sum_{q\leqslant Q^{1-\delta}} \CG(q;\boldsymbol{0})\\ =&\CI(w)\left(\sum_{q\leqslant Q^{1-\delta}}\frac{\widehat{S}_{q}(\boldsymbol{0})}{q^3}\right)+O_N\left(\sum_{q\leqslant Q^{1-\delta}}\frac{\left|\widehat{S}_{q}(\boldsymbol{0})\right|}{q^3}\left(\frac{q}{Q}\right)^N\right)\\ =&\CI(w)\left(\widehat{\mathfrak{S}}(\CW_m;L,\bGamma_m)\left(L^4(1-\delta)\log Q+\gamma\right)+O_\varepsilon(Q^{-\frac{1}{2}(1-\delta)+\varepsilon})\right)+O_{N,\varepsilon}(B^{-\delta N+\varepsilon})\\ =&\CI(w)\widehat{\mathfrak{S}}(\CW_m;L,\bGamma_m)\left(L^4(1-\delta)\log Q+\gamma\right)+O_\varepsilon(B^{-\frac{1}{2}(1-\delta)+\varepsilon}),
	\end{align*} on taking $N$ large enough.

Alternatively, write $A_t:=\sum_{q\leqslant t}\widehat{S}_q(\boldsymbol{0})$. We also have, by partial summation,
\begin{align*}
	\sum_{Q^{1-\delta}<q\leqslant cQ} \CG^{*}(q;\boldsymbol{0})&=-\int_{Q^{1-\delta}}^{cQ}A_t\frac{\partial }{\partial t}\left(\frac{\widehat{\CI}^{*}_{\frac{t}{Q}}(w;\boldsymbol{0})}{t^3}\right)\operatorname{d}t +
	\left[A_t\frac{\widehat{\CI}^{*}_{\frac{t}{Q}}(w;\boldsymbol{0})}{t^3}\right]_{Q^{1-\delta}}^{cQ}. %O\left(\sup_{q\sim T}\frac{\left|\widehat{S}_{q}(\bc)\widehat{\CI}_{\frac{q}{Q}}(w;\boldsymbol{0})\right|}{q^3}\right).
\end{align*}
 By Proposition \ref{prop:ssumc=0} and the ``trivial estimate'' in Lemma \ref{le:Isimplehard}, the variation term equals $$A_{Q^{1-\delta}}\frac{\widehat{\CI}^{*}_{Q^{-\delta}}(w;\boldsymbol{0})}{Q^{3(1-\delta)}}=\frac{1}{3}L^4\widehat{\mathfrak{S}}(\CW_m;L,\bGamma_m)\widehat{\CI}^{*}_{Q^{-\delta}}(w;\boldsymbol{0})+O_\varepsilon(Q^{-\frac{1}{2}+\frac{1}{2}\delta+\varepsilon}).$$ Moreover, the term with integral equals \begin{equation}\label{eq:step0}
 	-\frac{1}{3}L^4\widehat{\mathfrak{S}}(\CW_m;L,\bGamma_m)\int_{Q^{1-\delta}}^{cQ}t^3\frac{\partial }{\partial t}\left(\frac{\widehat{\CI}^{*}_{\frac{t}{Q}}(w;\boldsymbol{0})}{t^3}\right)\operatorname{d}t,
 \end{equation}with error \begin{align*}
 \ll_\varepsilon & \int_{Q^{1-\delta}}^{cQ}t^{\frac{5}{2}+\varepsilon}\left|\frac{\partial }{\partial t}\left(\frac{\widehat{\CI}^{*}_{\frac{t}{Q}}(w;\boldsymbol{0})}{t^3}\right)\right|\operatorname{d}t\ll_{\varepsilon} Q^{-\frac{1}{2}+\frac{3}{2}\delta+\varepsilon},
\end{align*} by Lemma \ref{le:Isimplehard}. 
%and similarly the big O-term is $\ll_{\varepsilon} B^{-\frac{1}{2}(1-\delta)+\varepsilon}$.
 Integration by parts yields that the integral in \eqref{eq:step0} equals 
 $$-\widehat{\CI}^{*}_{Q^{-\delta}}(w;\boldsymbol{0})-3\int_{Q^{1-\delta}}^{cQ}\frac{\widehat{\CI}^{*}_{\frac{t}{Q}}(w;\boldsymbol{0})}{t}\operatorname{d}t.$$ (Here we use the fact that $\widehat{\CI}_{r}^{*}(w;\boldsymbol{c})$ vanishes around $r=c$.) 
 %Using \cite[Lemma 13]{H-Bdelta} again, $$\widehat{\CI}_{Q^{-\delta}}(w;\boldsymbol{0})=\CI(w)+O_N(Q^{-N\delta}).$$
 We conclude that 
 $$\sum_{Q^{1-\delta}<q\leqslant cQ} \CG^{*}(q;\boldsymbol{0})=L^4\widehat{\mathfrak{S}}(\CW_m;L,\bGamma_m)\int_{Q^{-\delta}}^{c}\frac{\widehat{\CI}^{*}_{r}(w;\boldsymbol{0})}{r}\operatorname{d}r+O_\varepsilon(Q^{-\frac{1}{2}+\frac{3}{2}\delta+\varepsilon}),$$ once the usual change of variable $t\mapsto r:=\frac{t}{Q}$ is executed. We now use (the first equality of) Lemma \ref{le:CIBCI} to compute
 $$\int_{Q^{-\delta}}^{c}\frac{\widehat{\CI}^{*}_{r}(w;\boldsymbol{0})}{r}\operatorname{d}r=\int_{Q^{-\delta}}^{c}\frac{\widehat{\CI}_{r}(w;\boldsymbol{0})}{r}\operatorname{d}r+O(B^{-\theta+2\delta}).$$

Let us consider the function $K:\BR_{>0}\to \BC$ by
$$K(v):=\CI(w)\log v+\int_{v}^{c}\frac{\widehat{\CI}_{r}(w;\boldsymbol{0})}{r}\operatorname{d}r.$$
Then the argument in \cite[p. 202--203]{H-Bdelta} (an application of \cite[Lemma 13]{H-Bdelta}) shows that 
$$K(0):=\lim_{v\to 0^+}K(v)$$ exists and as $v\to 0^+$, $$K(v)=K(0)+O_N(v^N).$$
Consequently, $$\int_{Q^{-\delta}}^{c}\frac{\widehat{\CI}_{r}(w;\boldsymbol{0})}{r}\operatorname{d}r=\delta\CI(w)\log Q+K(0)+O_N(Q^{-N\delta}).$$
So gathering together all the estimates we obtained so far gives
\begin{align*}
	&\sum_{q\ll Q} \CG(q;\boldsymbol{0})\\=&\widehat{\mathfrak{S}}(\CW_m;L,\bGamma_m)\left(\CI(w)\left(L^4\log Q+\gamma\right)+L^4K(0)\right)+O_\varepsilon(B^{-\frac{1}{2}+\frac{3}{2}\delta+\varepsilon})+O_N(B^{-N\delta})\\ =&L^4\left(\widehat{\mathfrak{S}}(\CW_m;L,\bGamma_m)\left(\CI(w)\log B+\CI(w)\left(L^{-4}\gamma-\log L\right)+K(0)\right)\right)+O_{\varepsilon}(B^{-\frac{1}{2}+\varepsilon}),
\end{align*}
on first taking $N$ large enough depending on $\delta$, then on taking $\delta$ arbitrarily small. We conclude the proof of Theorem \ref{thm:c=0} with \begin{equation}\label{eq:b}
	b=\widehat{\mathfrak{S}}(\CW_m;L,\bGamma_m)\left(\CI(w)\left(L^{-4}\gamma-\log L\right)+K(0)\right),
\end{equation} which is clearly $\ll\widehat{\mathfrak{S}}(\CW_m;L,\bGamma_m)$.
%As already seen in the proof of Proposition \ref{prop:ssumc=0}, we have $b=O(1)$. %where $c$ is defined by \eqref{eq:c}.
\end{proof}

\subsection{Completion of the proof of Theorem \ref{thm:mainterm}}
Recalling our choice of $Q$ \eqref{eq:Q}, going back to \eqref{eq:poisson}, applying Theorems \ref{thm:csumneq0noomegem} and \ref{thm:c=0} gives
\begin{align*}
	\sum_{\substack{\bx\in\BZ^3: F(\bx)=m\\ \bx\equiv \bGamma_m\bmod L}}w\left(\frac{\bx}{B}\right)&=\frac{C_Q}{Q^2}\underset{\substack{\bc\in\BZ^{3},q\ll Q}}{\sum\sum}\frac{\widehat{S}_{q}(\bc)\widehat{I}_{q}(w;\bc)}{(qL)^3}\\ &= \frac{C_Q}{Q^2}\left(\sum_{q=1}^\infty\frac{\widehat{S}_{q}(\boldsymbol{0})\widehat{I}_{q}(w;\boldsymbol{0})}{(qL)^3}+\underset{\substack{\bc\in\BZ^{3}\setminus\boldsymbol{0},q\ll Q}}{\sum\sum}\frac{\widehat{S}_{q}(\bc)\widehat{I}_{q}(w;\bc)}{(qL)^3}\right)\\
	&=\CI(w)\widehat{\mathfrak{S}}(\CW_m;L,\bGamma_m) B(\log B+O(1))+O\left(B(\log B)^{\frac{1983}{1984}}\right). \end{align*}
	To get the lower bound for $\widehat{\mathfrak{S}}(\CW_m;L,\bGamma_m)$ we use \eqref{eq:frakG} and note that (see e.g. \cite[I. \S5.4 \S5.5]{Tenenbaum}) $$\Upsilon_{1}(m) \ll \log\log m. \qed$$
\subsection{Completion of the proof of Theorem \ref{thm:mainsecondary}}
Applying instead Theorems \ref{thm:csumneq0} and \ref{thm:c=0} furnishes,
\begin{align*}
	\sum_{\substack{\bx\in\BZ^3: F(\bx)=m\\ \bx\equiv \bGamma_m\bmod L}}w\left(\frac{\bx}{B}\right)=\CI(w)\widehat{\mathfrak{S}}(\CW_m;L,\bGamma_m) B\log B+aB+O\left(\frac{B}{(\log B)^{\frac{1}{4}(1-\frac{\sqrt{2}}{2})}}\right),  
\end{align*} where we define $$a:=\CK+b,$$ $\CK,b$ being defined by \eqref{eq:CK} and by \eqref{eq:b} respectively, and clearly $a=O(1)$.
\qed

\appendix
\section{Interpretation via Chambert-Loir--Tschinkel}
In \cite{CL-T}, Chambert-Loir and Tschinkel initiate a programme on counting integral points on projective varieties along with an interpretation of the asymptotic formulas.

Let us place ourselves in the simplest setting which covers the affine surface that we study before.
Let $X$ be a smooth projective split variety over $\BQ$.  Let $D\subset X$ be an effective, geometrically integral divisor such that the log-anticanonical line bundle $\omega_X(D)^{-1}$ is ample, and let $H$ be a height function associated to $\omega_X(D)^{-1}$. Let $\CU$ be a flat integral model of the open subset $U:=X\setminus D$. After \cite{CL-T}, we expect that, for $\CA\subset\CU(\widehat{\BZ})$ a non-empty adelic open subset, 
$$\#\{P\in\CA\cap X(\BQ):H(P)\leqslant B\}\sim \alpha(D)\tau_\infty(D)\tau_f(\CA)B(\log B)^{\operatorname{rank}\operatorname{Pic}(X)-1},$$ where $$\alpha(D):= \frac{1}{(\operatorname{rank}\operatorname{Pic}(X)-1)!}\int_{\mathcal{C}(X)^\vee}\exp\left(-\langle\omega_X(D)^{-1},\by\rangle\right)\operatorname{d}\by,$$ is analogous to Peyre's $\alpha$-constant (which roughly speaking computes a certain weighted volume of the dual of the pseudo-effective cone $\mathcal{C}(X)$),
$\tau_\infty(D)$ is certain real Tamagawa volume of $D(\BR)$ defined in terms of the ``residue measure'' induced by $\omega_X(D)^{-1}$, and
$$\tau_f:=\prod_{p<\infty}\left(1-\frac{1}{p}\right)^{\operatorname{rank}\operatorname{Pic}(X)-1}\tau_{(X,D),p},$$ is a finite Tamagawa measure on $\CU(\widehat{\BZ})$. The key reason why we should only see $D(\BR)$ in the real part is that integral points of $\CU$, or rational points of $X$ that are integral with respect to $D$, accumulate arbitrarily close to the boundary $D$ as the height grows. So the appearance of certain volume of $D(\BR)$ is the ``limit'' of such a behaviour.

In our case, the height is associated to the naive metric, and the surface $W_m$ can be compactified into the split quadric surface $X_m:=\BP^1\times\BP^1\subset \BP^3$ by adding the boundary divisor $D:(F=0)$, a conic isomorphic to $\BP^1$ as a $\CO(1,1)$-section of $\BP^1\times\BP^1$. We have $\operatorname{rank}\operatorname{Pic}(X_m)=2$, $\omega_{X_m}(D)^{-1}\simeq \CO(1,1)$, $\alpha(D)=1$, and $\tau_{(X_m,D),p}$ coincides with the induced Tamagawa measure on $W_m(\BQ_p)$. Finally, $\tau_\infty=2\times \tau_{D,\infty}$ on $D(\BR)$, where the real measure $\tau_{D,\infty}$ is induced by $\CO_{\BP^1}(2)$. The discussion in \cite[Remark 6.3]{PART1} shows that the singular integral $\CI(w)$ \eqref{eq:CIw} is the ``weighted'' version of the real volume $\tau_\infty(D(\BR))$.
 %and hence, viewing $D$ as a conic in $\BP^3$, $\tau_{D,\infty}(D(\BR))=\frac{1}{2}$.

%\footnote{Need to clarify the connection between weighted residue measures.}.

	\section*{Acknowledgements}
	We are very grateful to Runlin Zhang for communicating the references \cite{K-K,K-K2,Oh-Shah2} and for sharing many insights, when this paper was coming to finalise. We thank Victor Wang for asking questions that lead this article to the current form. We are grateful to Tim Browning and Roger Heath-Brown for their interests and comments. We also thank Cécile Dartyge, Emmanuel Kowalski, Han Wu, Shucheng Yu for helpful discussions.


\begin{thebibliography}{99}
		\bibitem{Borovoi-Rudnick}
		M. Borovoi \and Z. Rudnick, Hardy–Littlewood varieties and semisimple groups. \emph{Invent. Math.} \textbf{119} (1995) 37–66.
		%\bibitem{Browning-Gorodnik} T. D. Browning \and A. Gorodnik, Power-free values of polynomials on symmetric varieties. \emph{Proc. London Math. Soc.} \textbf{114} (2017), 1044-1080.
		%\bibitem{B-K-S} T. Browning V. V. Kumaraswamy \and R. Steiner, Twisted Linnik implies optimal covering exponent for $\BS^3$. \emph{I.M.R.N.} 2019, no. 1, 140-164.
		\bibitem{CL-T} A. Chambert-Loir \and Yu. Tschinkel, Igusa integrals and volume asymptotics in analytic and adelic geometry. \emph{Confluentes Math.}, \textbf{2} (3):351–429, 2010.
		\bibitem{CT-XuCompositio} J.-L. Colliot-Thélène \and F. Xu, Brauer-Manin obstruction for integral points of homogeneous spaces and representation of integral quadratic forms. \emph{Compositio Math.} \textbf{145}, 2009, 309–363.
		\bibitem{CT-Xu} J.-L. Colliot-Thélène \and F. Xu, Strong approximation for the total space of certain quadric fibrations   \emph{Acta Arith.} \textbf{157} (2013) 169-199.
		\bibitem{Dartyge-Martin} C. Dartyge \and G. Martin, Exponential sums with reducible polynomials, \emph{Discrete Analysis} \textbf{15} (2019), 31 pp.
		  \bibitem{D-F-I} H. Iwaniec;  W. Duke \and J. Frielander, Bounds for automorphic $L$-functions. \emph{Invent. Math.} \textbf{112} (1993): 1-8.
		%\bibitem{D-F-I2} W. Duke; J. Friedlander \and H. Iwaniec, Bilinear forms with Kloosterman fractions, \emph{Invent. Math.} \textbf{128}  (1997), 23-43.
		\bibitem{D-F-I2} W. Duke; J. Friedlander \and H. Iwaniec, Weyl sums for quadratic roots, \emph{Int. Math. Res. Notices} (2012) \textbf{2012} no. 11, 2493-2549.
 		\bibitem{Duke-Rudnick-Sarnak} W. Duke; Z. Rudnick \and P. Sarnak, Density of integer points on affine homogeneous varieties.  \emph{Duke Math. J.} \textbf{71} (1993), 143–179.
		\bibitem{EM} A. Eskin \and C. McMullen, Mixing, counting, and equidistribution in Lie groups, \emph{Duke Math. J.} \textbf{71} (1993), 181-209.
		\bibitem{F-I} J.B. Friedlander \and H. Iwaniec, Small Representations by Indefinite Ternary Quadratic Forms. In  \emph{Number Theory and Related Fields: In Memory of Alf van der Poorten}, Springer Proceedings in Mathematics \& Statistics \textbf{43}, J.M. Borwein et al. (eds.), 157--164.
		\bibitem{H-Bdelta}  D.R. Heath-Brown, A new form of the circle method, and its application to quadratic forms. {\em J. Reine Angew. Math.} {\bf 481} (1996), 149--206.
		\bibitem{HuangII} Z. Huang. Quantitative strong approximation for ternary quadratic forms II, \emph{preprint}.
		\bibitem{HuangIII} Z. Huang. Quantitative strong approximation for ternary quadratic forms III, \emph{preprint}.
		\bibitem{PART1} Z. Huang; D. Schindler \and A. Shute, Quantitative optimal weak approximation for projective quadrics, \emph{preprint}.
		\bibitem{PART2} Z. Huang; D. Schindler \and A. Shute, Quantitative strong approximation for quaternary quadratic forms, \emph{preprint}.
		\bibitem{HooleyActa} C. Hooley, On the number of divisors of quadratic polynomials. \emph{Acta Math.} \textbf{110} (1963) (1) 97–114.
		\bibitem{Hooley} C. Hooley, On the distribution of the roots of polynomial congruences, \emph{Mathematika} \textbf{11} (1964) 39–49.
		\bibitem{HKKL} T. Hulse, C. Kuan, E. Kıral \and L.-M. Lim, Counting square discriminants, \emph{J. Number Theory} \textbf{162} (2016) 255–274.
		\bibitem{Huxley} M. N. Huxley, A note on polynomial congruences, in \emph{Recent Progress in Analytic NumberTheory} \textbf{1}, (Durham, 1979), Academic Press, New York, 1981, pp. 193–196.
		%\bibitem{Iwaniec} H. Iwaniec, Fourier coefficients of modular forms of half-integral weight. \emph{Invent. Math.} \textbf{87} (1987), no. 2, 385--401.
		\bibitem{Iwaniec-Kolwalski}	H. Iwaniec \and E. Kowalski, \emph{Analytic number theory}. American Mathematical Society Colloquium Publications, \textbf{53}. American Mathematical Society, Providence, RI, 2004. xii+615 pp.
		%\bibitem{Katz} N. M. Katz, \emph{Estimates for Mixed Character Sums.} GAFA Geom. Funct. Anal. \textbf{18}, 1251–1269 (2008).
		\bibitem{K-K} D. Kelmer \and A. Kontorovich, Effective equidistribution of shears and applications, \emph{Math. Ann.} (2018) \textbf{370}, 381–421.
		\bibitem{K-K2} D. Kelmer \and A. Kontorovich, Exponents for the equidistribution of shears and applications. \emph{J. Number Theory}. \textbf{208} (2020) 1–46.
		%\bibitem{L-O} J. Lagarias \and A. Odlyzko, Effective versions of the Chebotarev density theorem. In: \emph{Algebraic number fields: L-functions and Galois properties (Proc. Sympos., Univ. Durham, Durham, 1975)}, pp. 409–464. Academic Press, London, 1977.
		%\bibitem{Liu-Sarnak} J. Liu \and P. Sarnak, Integral points on quadrics in three variables whose coordinates have few prime factors. {\em Israel J. Math.} \textbf{178} (2010), 393–-426. 
		\bibitem{Nagell} N. Nagell, \emph{Introduction to Number Theory.} John Wiley \& Sons, Inc., New York; Almqvist \& Wiksell, Stockholm, 1951. 309 pp.
		\bibitem{Oh-Shah} H. Oh \and N.A. Shah, Limits of translates of divergent geodesics and integral points on one-sheeted hyperboloids, {\em Israel J. Math.} \textbf{199} no. 2 (2014), 915–931.
		\bibitem{Oh-Shah2} H. Oh \and N.A. Shah, Effective orbital counting with finite stabilizers, \emph{preprint}.
		\bibitem{Tenenbaum} G. Tenenbaum, \emph{Introduction à la théorie analytique et probabiliste des nombres,} Cours Spécialisés \textbf{1}, Soc. Math. France, Paris, 1995.
		%\bibitem{Toth} Á. Tóth, Roots of quadratic congruences. \emph{Internat. Math. Res. Notices} (2000) \textbf{14}, 719–739.
		%\bibitem{Wei-Xu}  D. Wei \and F. Xu, Counting integral points in certain homogeneous spaces. \emph{J. Algebra} \textbf{448} (2016), 350–398. 
		\bibitem{Weil} A. Weil, On some exponential sums. \emph{Proc. Nat. Acad. Sci. USA} \textbf{34} (1948), 204--207.
		%\bibitem{Wilsch} F. Wilsch, Integral Points of Bounded Height on a Log Fano Threefold,	\emph{International Mathematics Research Notices}, rnac048.
        \bibitem{Xu-Zhang} F. Xu \and R. Zhang, Counting integral points on indefinite ternary quadratic equations over number fields. {\em The Quarterly Journal of Mathematics}, Volume \textbf{74}, Issue 2 (2023) 659–685.
	\end{thebibliography}
\end{document}